\documentclass{article}
\usepackage[margin = 1in]{geometry}
\usepackage{amsmath, amssymb}
\usepackage{tikz, tikz-cd}
\usepackage[style = alphabetic, maxalphanames=4, maxnames = 4, doi = false, url = false, isbn = false, firstinits = true]{biblatex}
\addbibresource{references.bib}
\DeclareFieldFormat[article, misc, incollection, inbook, inproceedings, book]{title}{#1}

\DeclareFontFamily{U}{wncy}{}
\DeclareFontShape{U}{wncy}{m}{n}{<->wncyr10}{}
\DeclareSymbolFont{mcy}{U}{wncy}{m}{n}
\DeclareMathSymbol{\Sha}{\mathord}{mcy}{"58} 

\usepackage{amsthm}
\theoremstyle{definition}
\newtheorem{defn}{Definition}[section]
\newtheorem{example}[defn]{Example}
\theoremstyle{plain}
\newtheorem{thm}[defn]{Theorem}
\newtheorem{lemma}[defn]{Lemma}
\newtheorem{cor}[defn]{Corollary}
\newtheorem{prop}[defn]{Proposition}
\theoremstyle{remark}
\newtheorem{remark}[defn]{Remark}

\newcommand{\spec}{\mathrm{Spec}\,}

\newcommand{\bl}{\mathrm{Bl}}
\DeclareMathOperator{\Frac}{\mathrm{Frac}}
\newcommand{\onto}{\twoheadrightarrow}
\newcommand{\id}{\mathrm{id}}

\newcommand{\comp}[1]{\widehat{#1}}

\newcommand{\adj}{\mathrm{adj}}
\newcommand{\codim}{\mathrm{codim}}
\newcommand{\Jac}{\mathrm{Jac}}
\newcommand{\ch}{\mathrm{char}\,}

\title{Local constancy of reduction type and related invariants for curves in $p$-adic families}
\author{Jakab Schrettner}
\setcounter{tocdepth}{1}
\begin{document}
	\maketitle
	\begin{abstract}
	We investigate the behaviour of the reduction type and related invariants of curves in families of curves over a discretely valued field. By a family, we will mean a set of curves obtained by perturbing the coefficients of the defining equations. We will show that the reduction type in these families is locally constant in the topology induced by the valuation. We also derive local constancy results for some related invariants, such as the Tamagawa number, the Birch and Swinnerton-Dyer `fudge factor' and the Galois representation.
	\end{abstract}
	\tableofcontents
\section{Introduction}
The purpose of this paper is to investigate curves in families over a discretely valued field $K$, mainly with respect to their reduction types and related invariants. By a family we will mean curves obtainable from each other by small perturbations in the coefficients of the equations defining the corresponding curves. 

In particular we will prove the following results. In each case we let $K$ be a discretely valued field, with valuation ring $R$, which is assumed to be excellent (a mild condition that is satisfied for example if $R$ has characteristic zero, or is complete), with uniformiser $\pi\in R$ and residue field $k = R/\pi R$.
\begin{thm}\label{intro_hyperelliptic}
Suppose $\ch K \not=2$. Let $C/K$ be a hyperelliptic curve given by
\[C:\,y^2 = f(x)\]
where $f\in R[x]$. Suppose $\widetilde{f}\in R[x]$ is separable, of the same degree as $f$, $f-\widetilde{f}\in \pi^NR[x]$, and we take the hyperelliptic curve $C'$ given by
\[C':\,y^2 = \widetilde{f}(x).\]
Then for $N$ sufficiently large (depending on $f$), $C$ and $C'$ have regular models with the same special fibre.
\end{thm}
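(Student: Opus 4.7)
The plan is to begin with naive integral models $\mathcal{X} = \spec R[x,y]/(y^2 - f(x))$ and $\mathcal{X}'$ (glued to an analogous chart at infinity to obtain proper $R$-models of $C$ and $C'$), and to compare their desingularisations. Outside the singular locus both models are already regular, and on the generic fibre there is nothing to do since $f$ and $\tilde{f}$ are separable; the problem therefore localises at the finitely many non-regular closed points of the special fibres. Since $f \equiv \tilde{f} \pmod{\pi}$, the closed fibres of $\mathcal{X}$ and $\mathcal{X}'$ coincide scheme-theoretically, and so do their singular loci, so it makes sense to speak of the ``same'' singular point on the two models.

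Next I would invoke Lipman's theorem on resolution of singularities for excellent two-dimensional schemes: a regular model is obtained after finitely many alternations of normalising and blowing up closed non-regular points. Each such operation is algebraically explicit: in affine charts the strict transform of an equation $y^2 = g(x)$ is cut out by a polynomial obtained from $g$ by a linear substitution and division by a power of $\pi$. The key quantitative lemma to establish is that a single such step applied in parallel to $\mathcal{X}$ and to $\mathcal{X}'$ produces new models locally described by polynomials $f_1$, $\tilde{f}_1$ with $\tilde{f}_1 - f_1 \in \pi^{N-c}R[x]$, where $c$ is an absolute constant depending only on the local combinatorics (the multiplicity of the singular point, the chart being used, etc.) at the point being blown up.

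Iterating, the fact that resolution of the fixed model $\mathcal{X}$ terminates after finitely many steps yields a constant $M = M(f)$, the accumulated total loss, such that for $N \geq M$ the same sequence of blow-ups and normalisations applied to $\mathcal{X}'$ also yields a regular model, and in every affine chart the local equations of the two desingularisations agree modulo a positive power of $\pi$. Reducing mod $\pi$ turns this congruence into an equality, and the charts glue to an identification of the two special fibres.

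The main obstacle is ensuring that the ``same'' sequence of blow-ups really does resolve $\mathcal{X}'$, i.e.\ that none of Lipman's termination or choice criteria (regularity of a local ring, behaviour of the invariants governing normalisation) detect any discrepancy between $f$ and $\tilde{f}$ at an intermediate stage. I would handle this by passing to the $\pi$-adic completions of the relevant local rings: $\widehat{\mathcal{O}}_{\mathcal{X},P}$ and $\widehat{\mathcal{O}}_{\mathcal{X}',P}$ become isomorphic as $R/\pi^{N-M}$-algebras after truncation, and since the whole desingularisation lives inside an infinitesimal thickening of the closed fibre, this bounded-order isomorphism suffices to transfer every regularity and normality check. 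The excellence hypothesis enters precisely here, guaranteeing that normalisation behaves well with respect to completion and that working locally and gluing back yields a genuine global regular model with the desired special fibre.
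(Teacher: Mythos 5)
Your high-level strategy -- track a congruence between defining equations through the resolution process, and argue that each resolution step loses only a bounded power of $\pi$ -- is the same as the paper's. But the proposal leaves the central lemma unproved and rests on claims that fail in the form stated.

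First, the claim that ``the strict transform of an equation $y^2 = g(x)$ is cut out by a polynomial obtained from $g$ by a linear substitution and division by a power of $\pi$'' is not correct: after blowing up a closed point of the special fibre, the strict transform in the blowup charts is a hypersurface but no longer in Weierstrass form, and after further blowups the ambient space is no longer $\mathbb{A}^2_R$ at all. So one cannot track a single one-variable polynomial $f_1$; one must track an ideal in a changing ambient ring. Second, and more seriously, the ``key quantitative lemma'' you isolate -- that a blowup produces new local equations with $\widetilde{f}_1 - f_1 \in \pi^{N-c}R[\,\cdot\,]$ -- is exactly what is hard, and you give no argument for it. The strict transform of an ideal is a saturation, not a list of transformed generators, so there is no canonical way to produce generators of the strict transform of $\mathcal{C}'$ close to those of $\mathcal{C}$. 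The paper circumvents this by replacing the raw congruence of generators ($N$-closeness) with the existence of an automorphism of the $\pi$-adic completion of the ambient ring carrying one ideal to the other and congruent to the identity mod $\pi^N$ (formal $N$-closeness). Producing that automorphism is a genuine piece of work: Proposition \ref{CS} adapts Cutkosky--Srinivasan, uses the Jacobian criterion and that the generic fibre is smooth to find $f\ge 0$ with $\pi^f$ in the ideal generated by the equations and the maximal Jacobian minors, and then solves a quadratic system by Hensel's lemma; the loss $d = 2f+2$ is \emph{not} controlled merely by ``local combinatorics of the singular point.'' Only then does the blowup step (Theorem \ref{blowup_formal_closeness}) show formal closeness is preserved with loss $1$ per blowup, by explicitly extending the automorphism to the blown-up ring and checking it respects strict transforms.

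Your suggestion to pass to completed local rings is pointing in the right direction -- it is precisely the mechanism the paper uses to make ``the same sequence of blowups resolves $\mathcal{C}'$'' rigorous (via Propositions \ref{formal_closeness_properties} and \ref{closeness_regularity}) -- but as written it is an assertion, not an argument. Finally, a smaller point: you invoke Lipman's resolution, which alternates blowups with normalisations. Normalisation is hard to track $\pi$-adically (there is no formula for its generators), and the paper instead cites \cite{cossart2013canonical} (Theorem \ref{resolution}), which gives resolution by blowups in regular centres alone, avoiding this issue. In short, the skeleton of the argument is right, but the difficult content -- establishing and propagating the bounded-loss estimate without assuming the equations retain any special form -- is missing.
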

\begin{thm}\label{intro_bihyperelliptic}
	Suppose $\ch K \not= 2$. Let $C/K$ be a bihyperelliptic curve, i.e. one given by the affine model
	\[C:\,\begin{cases}
		y^2 = f(x)\\
		z^2 = g(x)
	\end{cases}\]
	for polynomials $f,g\in R[x]$ such that $fg$ has no repeated roots. Suppose $\widetilde{f}, \widetilde{g}$ are of the same degree as $f,g$ with $\widetilde{f}\widetilde{g}$ separable,$f-\widetilde{f}, g-\widetilde{g} \in \pi^N R[x]$, and we consider the bihyperelliptic curve 
	\[C':\,\begin{cases}
		y^2 = \widetilde{f}(x)\\
		z^2 = \widetilde{g}(x)
	\end{cases}.\]
	Then for $N$ sufficiently large (depending on $f,g$), $C$ and $C'$ have regular models with the same special fibre.
\end{thm}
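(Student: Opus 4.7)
The plan is to reduce Theorem~\ref{intro_bihyperelliptic} to Theorem~\ref{intro_hyperelliptic} by exploiting the Galois structure of bihyperelliptic curves. The curve $C$ is a $(\mathbb{Z}/2)^2$-Galois cover of $\mathbb{P}^1_K$ via the $x$-coordinate, and has three natural hyperelliptic quotients, one per nontrivial subgroup of the Galois group: the curves $E_f: y^2 = f(x)$, $E_g: z^2 = g(x)$, and $D: w^2 = f(x)g(x)$ with $w = yz$. The last of these is the one to use here, because the hypothesis that $fg$ is separable is precisely what makes $D$ a smooth hyperelliptic curve to which Theorem~\ref{intro_hyperelliptic} applies.

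The construction then runs as follows. Writing $fg - \widetilde{f}\widetilde{g} = (f-\widetilde{f})g + \widetilde{f}(g-\widetilde{g})$ shows that $fg - \widetilde{f}\widetilde{g} \in \pi^N R[x]$, so for $N$ sufficiently large Theorem~\ref{intro_hyperelliptic} applied to $D$ and $D': w^2 = \widetilde{f}\widetilde{g}$ produces regular models $\mathcal{D}, \mathcal{D}'$ over $R$ with the same special fibre. I would then define $\mathcal{C}$ as the normalisation of $\mathcal{D}$ in the quadratic extension $K(C) = K(D)[y]/(y^2 - f)$, and similarly $\mathcal{C}'$ as the normalisation of $\mathcal{D}'$ in $K(D')[y]/(y^2 - \widetilde{f})$. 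This yields a finite double cover $\mathcal{C} \to \mathcal{D}$ extending the generic map $C \to D$. A final resolution step (e.g.\ via Lipman's desingularisation algorithm) produces genuinely regular arithmetic surfaces $\widetilde{\mathcal{C}}, \widetilde{\mathcal{C}'}$.

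The main obstacle is to show that this process actually produces the same special fibre in both cases, rather than only regular models with the same generic fibres. The singularities of $\mathcal{C} \to \mathcal{D}$ sit over the intersection of the branch divisor $\{f=0\}$ with the special fibre of $\mathcal{D}$; at each such point, $\mathcal{C}$ is \'etale-locally a cyclic double cover whose minimal resolution is encoded by continued-fraction data read off from the local expansion of $f$. For $N$ sufficiently large, this local data agrees for $f$ and $\widetilde{f}$ at every such point, so the resolutions of $\mathcal{C}, \mathcal{C}'$ produce isomorphic special fibres. The technical crux is to quantify how large $N$ must be in order to guarantee this local agreement simultaneously at every relevant point; this should follow from a compactness argument of the same flavour as the one underlying Theorem~\ref{intro_hyperelliptic}, together with the fact that the (shared) special fibre of $\mathcal{D}, \mathcal{D}'$ contains only finitely many closed points needing individual attention.
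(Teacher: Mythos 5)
Your plan — reducing to the hyperelliptic quotient $D: w^2 = fg$ and building $\mathcal{C}$, $\mathcal{C}'$ as normalisations of regular models of $D$, $D'$ — is a genuinely different route from the paper's, which instead realises $C$ and $C'$ directly as $N$-close complete intersections in a common regular ambient $R$-scheme (glued from charts like $\spec R[x,y,z]$ where $\mathcal{C}$ is cut out by the pair $y^2-f$, $z^2-g$, in the spirit of Example~\ref{2_hyperell_example}) and then invokes the general Theorem~\ref{main_result}. The observation that $fg-\widetilde{f}\widetilde{g}\in\pi^N R[x]$ and the application of Theorem~\ref{intro_hyperelliptic} to get regular models $\mathcal{D}$, $\mathcal{D}'$ of $D$, $D'$ with isomorphic special fibres are both fine.

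The gap is at the central step of your argument. Theorem~\ref{intro_hyperelliptic} only gives you that $\mathcal{D}_s\cong\mathcal{D}'_s$ as $k$-schemes; but $\mathcal{D}$ and $\mathcal{D}'$ are different schemes, and the branch divisors $\{f=0\}$ on $\mathcal{D}$ and $\{\widetilde f=0\}$ on $\mathcal{D}'$ can meet their special fibres with different infinitesimal data even when the special fibres themselves are isomorphic. The normalisations $\mathcal{C}\to\mathcal{D}$ and $\mathcal{C}'\to\mathcal{D}'$ and their desingularisations are governed by exactly that infinitesimal data, so the claim that ``for $N$ sufficiently large, this local data agrees at every relevant point'' is precisely the hard part and is asserted, not proved. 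The appeal to ``a compactness argument of the same flavour as the one underlying Theorem~\ref{intro_hyperelliptic}'' is circular: in the paper Theorem~\ref{intro_hyperelliptic} is not established by such an ad hoc argument but is a corollary of the formal-automorphism and blowup machinery (Theorems~\ref{closeness_to_formal_closeness}, \ref{blowup_formal_closeness}, \ref{close_models_regular}). To repair your reduction you would need the stronger output of Theorem~\ref{close_models_regular} — that the regular models of $D$ and $D'$ can be taken \emph{formally} $(N-a)$-close, not merely with isomorphic special fibres — and then carry that formal closeness through the normalisation and a further sequence of blowups. That is plausible but amounts to redeveloping Sections~\ref{section_automorphisms}--\ref{section_results} for double covers, whereas applying Theorem~\ref{main_result} directly to the ambient complete-intersection presentation of $C$ avoids all of it.
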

\begin{thm}\label{intro_projective}
	Let $C/K$ be a smooth projective curve embedded in $\mathbb{P}^n$ as a complete intersection, i.e. given by the vanishing of $n-1$ polynomials
	\[C: \{F_1(X_0, \ldots, X_n) = \ldots = F_{n-1}(X_0, \ldots, X_n) = 0\} \subseteq \mathbb{P}^n\]
	with $F_i\in R[X_0, \ldots, X_n]$ homogeneous. Suppose $\widetilde{F}_1, \ldots, \widetilde{F}_{n-1}$ are homogeneous polynomials of the same degree as the $F_i$, with $F_i-\widetilde{F}_i\in \pi^N R[X_0, \ldots, X_n]$, and such that the curve
	\[C': \{\widetilde{F}_1(X_0, \ldots, X_n) = \ldots = \widetilde{F}_{n-1}(X_0, \ldots, X_n) = 0\} \subseteq \mathbb{P}^n\]
	is smooth. Then for $N$ sufficiently large (depending on the $F_i$), $C$ and $C'$ have regular models with the same special fibre.	
\end{thm}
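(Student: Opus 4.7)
The plan is to take the naive projective models in $\mathbb{P}^n_R$ and resolve their singularities in parallel. Let $\mathcal{C}, \mathcal{C}' \subseteq \mathbb{P}^n_R$ denote the scheme-theoretic closures of $C$ and $C'$: these are cut out by the saturations of $(F_1, \ldots, F_{n-1})$ and $(\widetilde{F}_1, \ldots, \widetilde{F}_{n-1})$ by $\pi$. The saturation of an ideal stabilises at a bounded index $k_0$ depending only on the $F_i$, so a direct calculation shows that $\mathcal{C} \otimes_R R/\pi^{N - c_0} \cong \mathcal{C}' \otimes_R R/\pi^{N - c_0}$ for some constant $c_0 = c_0(F_i)$. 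In particular $\mathcal{C}$ and $\mathcal{C}'$ are both $R$-flat projective models of their respective generic fibres and share the same special fibre.

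Because $R$ is excellent, Lipman's theorem yields a proper birational morphism $\rho: \mathcal{X} \to \mathcal{C}$ with $\mathcal{X}$ regular, constructed from finitely many blow-ups at reduced closed subschemes of the special fibre, possibly interlaced with normalizations; since $C$ is smooth, all of these modifications are confined to the special fibre. The idea is to apply the mirror sequence of modifications to $\mathcal{C}'$, using the mod-$\pi^{N-c_0}$ identification to transport the blow-up centres. Inductively, the resulting schemes $\mathcal{C}_i$ and $\mathcal{C}'_i$ remain congruent modulo $\pi^{N_i}$ for a sequence $N_i$ whose decrement at each step is controlled by the local data (the multiplicity of the blown-up centre, or the $\pi$-depth of the conductor in the normalization step). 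Choosing $N$ large enough makes $N_m \geq 2$ at the final stage, yielding a model $\mathcal{X}'$ of $C'$ with $\mathcal{X} \otimes_R R/\pi^2 \cong \mathcal{X}' \otimes_R R/\pi^2$.

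Once $N_m \geq 2$, regularity of $\mathcal{X}'$ can be read off locally at each closed point $p$ of the shared special fibre: since $\pi \in \mathfrak{m}_p$ one has $\mathfrak{m}_p^2 \supseteq (\pi^2)$, so the congruence descends to an isomorphism $\mathcal{O}_{\mathcal{X},p}/\mathfrak{m}_p^2 \cong \mathcal{O}_{\mathcal{X}',p}/\mathfrak{m}_p^2$, and $\dim_{k(p)} \mathfrak{m}_p/\mathfrak{m}_p^2$ agrees on both sides. With the common Krull dimension $2$, regularity of $\mathcal{X}$ at $p$ forces regularity of $\mathcal{X}'$ at the corresponding point, and reducing the mod-$\pi^2$ congruence modulo $\pi$ gives equality of special fibres.

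The main obstacle is the inductive step, since neither blow-up nor normalization commutes with base change modulo $\pi^{N_i}$. For a blow-up at an ideal $I \supseteq (\pi)$, the discrepancy comes from the failure of $I^n \cap \pi^{N_i}\mathcal{O} = \pi^{N_i} I^n$, which can be controlled using flatness of the current model together with a local Rees-algebra computation. For normalization, one uses excellence to present the integral closure by finitely many fractions with denominators of bounded $\pi$-valuation, bounding the congruence loss at each step. A cleaner alternative is to work with formal completions along the special fibre, where blow-up interacts well with the $\pi$-adic topology, and to invoke an Artin-type approximation to realise the formal equality of resolutions as algebraic; making this bookkeeping precise is the technical heart of the argument.
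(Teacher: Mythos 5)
Your overall architecture — resolve singularities in parallel, track a $\pi$-adic congruence, read off regularity of the second resolution once the congruence is $\bmod\,\pi^2$ — matches the paper's. But the technical core is not carried out, and the place where you wave your hands is exactly where the paper's machinery lives.

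The key problem is that a mere congruence of schemes $\mathcal{C}_i \otimes R/\pi^{N_i} \cong \mathcal{C}'_i \otimes R/\pi^{N_i}$ is not the right invariant to propagate through blowups. If you only know that the two models agree modulo $\pi^{N_i}$ as abstract $R/\pi^{N_i}$-schemes, there is no canonical way to transport a blowup centre from one to the other, nor to compare the blowups afterward: the Rees algebra of an ideal $I$ is not determined by $I \bmod \pi^{N_i}$, and the discrepancy you flag (the failure of $I^n \cap \pi^{N_i}\mathcal{O} = \pi^{N_i}I^n$) cannot be controlled in general from the data you retain. The paper replaces this weak congruence with the notion of \emph{formal $N$-closeness} (Definition \ref{formal_N-closeness}): it asks for explicit automorphisms $\sigma$ of the $\pi$-adic completion of the ambient coordinate ring with $\sigma \equiv \mathrm{id} \bmod \pi^N$ carrying one defining ideal to the other. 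Producing these automorphisms from the congruence of the defining equations is a Cutkosky--Srinivasan style argument (Proposition \ref{CS}, relying on the Jacobian and Hensel's lemma), and one loses a fixed constant $d$ in the exponent that depends on a power of $\pi$ lying in the Jacobian ideal. These ambient automorphisms then restrict to automorphisms of the completed charts of the blowup (Theorem \ref{blowup_formal_closeness}), losing exactly one further power of $\pi$ per blowup. That is the precise content of what you defer to ``a local Rees-algebra computation'' and ``an Artin-type approximation.''

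The second gap is the choice of resolution theorem. Lipman's desingularisation interleaves blowups with normalisations, and controlling normalisation modulo $\pi^{N_i}$ is genuinely delicate — the conductor is a global object, and ``fractions with denominators of bounded $\pi$-valuation'' is not something you can extract cheaply from the congruence you maintain. The paper instead invokes Cossart--Jannsen--Saito (Theorem \ref{resolution}, \cite[Theorem 0.1]{cossart2013canonical}), which resolves excellent two-dimensional schemes by blowups at \emph{regular} centres alone, with no normalisation steps. This is not a cosmetic choice: without it the inductive argument would have an extra step type whose effect on the congruence you have not analysed. Swapping Lipman for Cossart--Jannsen--Saito would remove that obstruction. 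The remaining pieces of your sketch — the regularity criterion via $\mathfrak{m}_p/\mathfrak{m}_p^2$ once the models agree modulo $\pi^2$, and confining all modifications to the special fibre — do track the paper (Proposition \ref{closeness_regularity}).
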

All of these will be special cases of Theorem $\ref{main_result}$, which applies to curves presented as complete intersections inside some ambient space. In each case, we perturb the coefficients of the equations defining the curve; if the perturbations are sufficiently small, this yields a curve with the same reduction type. One may regard this as saying that the reduction type is \emph{locally constant} in these families. 

As a consequence, any invariant that can be read off of the special fibre of a regular model is also locally constant: this includes invariants such as the Tamagawa number (the number of $k$-points of the component group of the Jacobian), the index (smallest positive degree of a $K$-rational divisor) and the deficiency (whether $C$ has a $K$-rational divisor of degree $g-1$). In particular we have
\begin{thm}[= Theorem \ref{tamagawa_index_local_constancy}]
	Let $C$ and $C'$ be smooth projective curves in a family over $K$ as in Theorem \ref{intro_hyperelliptic}, Theorem \ref{intro_bihyperelliptic} or Theorem \ref{intro_projective}, defined by equations whose difference is divisible by $\pi^N$. Then for sufficiently large $N$, the following hold:
	\begin{enumerate}
		\item if $R$ is Henselian, then $C$ is locally soluble (i.e. has a $K$-point) if and only if $C'$ is locally soluble.
		\item $C$ and $C'$ have the same Néron component group and Tamagawa number.
		\item Suppose $k$ is perfect. Then $C$ and $C'$ have the same index. If $C$ and $C'$ are geometrically connected, then one is deficient if and only if the other is.
	\end{enumerate}
\end{thm}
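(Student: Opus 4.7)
The plan is to reduce each assertion to the conclusion already supplied by Theorems~\ref{intro_hyperelliptic}, \ref{intro_bihyperelliptic}, and~\ref{intro_projective} (the special cases of Theorem~\ref{main_result} invoked in the hypothesis): for $N$ sufficiently large there exist regular proper $R$-models $\mathcal{C}$ and $\mathcal{C}'$ of $C$ and $C'$ with isomorphic special fibres $\mathcal{C}_k \cong \mathcal{C}'_k$ as $k$-schemes. Each invariant in the list will then be shown to depend only on this special-fibre data---which encodes the multiplicities of components, their mutual intersection numbers, and the $\mathrm{Gal}(\bar k/k)$-action on components of $\mathcal{C}_{\bar k}$---and so transports immediately from $C$ to $C'$.

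For part (1), I would argue as follows. When $R$ is Henselian, the valuative criterion of properness identifies $C(K)$ with $\mathcal{C}(R)$, and the standard calculation in a regular $2$-dimensional local ring shows that every $R$-section of $\mathcal{C}$ meets the special fibre in a smooth $k$-point (the image of the section is cut out locally by a principal ideal $(f)$, so $(f,\pi)$ is a regular system of parameters and $\pi$ is not in $\mathfrak{m}^2$). Conversely Hensel's lemma lifts any smooth $k$-point of $\mathcal{C}_k$ to an $R$-point, hence to a $K$-point of $C$. Thus local solubility translates into the existence of a smooth $k$-point on $\mathcal{C}_k$, which is preserved by the isomorphism with $\mathcal{C}'_k$. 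For part (2), I would invoke Raynaud's description (Bosch--Lütkebohmert--Raynaud, \S9.6) of the component group $\Phi$ of the Néron model of $\Jac(C)$ as the cokernel of the intersection pairing on the free abelian group on components of $\mathcal{C}_{\bar k}$, the Galois action being by permutation of components. All of this data is encoded in $\mathcal{C}_k$ as a $k$-scheme, and so $\Phi$ as a Galois module---hence the Tamagawa number $c(C/K) = |\Phi(k)|$---agrees for $C$ and $C'$.

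For part (3), I would use the theorem of Bosch--Liu (Liu, \emph{Algebraic Geometry and Arithmetic Curves}, 9.1.23): when $k$ is perfect the index of $C/K$ equals $\gcd_i m_i\,[k(\Gamma_i):k]$, the gcd taken over irreducible components $\Gamma_i$ of $\mathcal{C}_k$ with multiplicities $m_i$. This is manifestly a function of $\mathcal{C}_k$, so the indices of $C$ and $C'$ coincide. For deficiency, I would note that non-existence of a $K$-rational divisor of degree $g-1$ is equivalent to the index failing to divide $g-1$; since the genus $g$ is also read off $\mathcal{C}_k$ (as the arithmetic genus of the special fibre, which equals that of the smooth generic fibre), equal indices force equal deficiencies. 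The one step that demands real input---the existence of a common regular model---has been delegated to Theorem~\ref{main_result}; granted that, the hard part of the present theorem is simply assembling the correct identifications of each invariant with combinatorial data on $\mathcal{C}_k$.
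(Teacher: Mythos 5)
Your proof follows the same strategy as the paper: invoke Theorem~\ref{main_result} to produce regular models of $C$ and $C'$ with the same special fibre, then observe that local solubility (sections land in the smooth locus; Hensel lifts smooth $k$-points), the N\'eron component group and Tamagawa number, and the index and deficiency are all functions of the special fibre of a regular model. The supporting citations differ slightly (the paper uses Bosch--Liu 1999 plus a result of Raynaud where you use BLR and Liu's book), but the argument structure is identical.
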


We also obtain similar local constancy results for some more related invariants, such as the so-called BSD fudge factor (which is a local factor appearing in the Birch and Swinnerton-Dyer conjecture, see Definition \ref{defn_fudge_factor}), and the Galois representation associated to the curves. In particular, we get the following:
\begin{thm}
	Let $C$ and $C'$ be smooth projective curves in a family over $K$ as in Theorem \ref{intro_hyperelliptic}, Theorem \ref{intro_bihyperelliptic} or Theorem \ref{intro_projective}, defined by equations whose difference is divisible by $\pi^N$. Then
	\begin{itemize}
		\item (Theorem \ref{fudge_local_constancy}) if the residue field $k = R/\pi R$ is finite, and $N$ is sufficiently large, the BSD fudge factors $\left|\frac{\omega}{\omega^0}\right|$ associated to $C$ and $C'$ are equal.
		\item (Theorem \ref{galois_rep_local_constancy}, Corollary \ref{galois_rep_invariants_local_constancy}) if $K$ is a local field of characteristic zero, $C$ and $C'$ have genus $g\ge 2$, and $N$ is sufficiently large, the Galois representations $H^1_{\acute{e}t}(C_{\overline{\mathbb{Q}_p}}, \mathbb{Q}_l)$ and $H^1_{\acute{e}t}(C'_{\overline{\mathbb{Q}_p}}, \mathbb{Q}_l)$ are isomorphic. In particular $C$ and $C'$ have the same local Euler factor, local root number and conductor exponent. (Here $l\not= p$ is a prime.)
	\end{itemize}
\end{thm}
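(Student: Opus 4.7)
The plan is to handle the two bullet points separately, in each case starting by invoking Theorem \ref{main_result} (or the relevant specialisation among Theorems \ref{intro_hyperelliptic}--\ref{intro_projective}) to obtain, for $N$ sufficiently large, regular models $\mathcal{C}$, $\mathcal{C}'$ of $C$, $C'$ over $R$ with isomorphic special fibres.

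For the BSD fudge factor, I would write $\omega=\lambda\omega^{0}$ with $\lambda\in K^{\times}$ and $\omega^{0}$ the N\'eron differential, so that $|\omega/\omega^{0}|=|\lambda|$, and note that $v(\lambda)$ admits a description in terms of the minimal regular model and the chosen global differential alone. Specifically, for a basis $\omega_{1},\ldots,\omega_{g}$ of $H^{0}(C,\Omega^{1})$, the top form $\omega_{1}\wedge\cdots\wedge\omega_{g}$ extends to a section of $\omega_{\mathcal{C}/R}^{\otimes g}$ whose order of vanishing along the special fibre computes $v(\lambda)$. I would choose the $\omega_{i}$ explicitly from the defining equations (for instance $x^{i}\,dx/y$ in the hyperelliptic case), with the analogous choices $\omega_{i}'$ on $C'$; the hypothesis that the coefficients differ by $\pi^{N}$ then forces the two resulting sections to agree on the common special fibre (and indeed on its $\pi^{N}$-thickening), so $v(\lambda)=v(\lambda')$ as soon as $N$ exceeds $v(\lambda)$.

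For the Galois representation, I would use the fact that for $g\ge 2$, by Deligne--Mumford, there is a finite Galois extension $K'/K$ over which both $C$ and $C'$ acquire stable reduction, and $H^{1}_{\acute{e}t}(C_{\overline{K}},\mathbb{Q}_{l})$ is determined by the stable reduction together with the $\mathrm{Gal}(K'/K)$-action (e.g.\ via the weight--monodromy spectral sequence, whose input is the dual graph of the special fibre decorated by the genera of components and the Galois action on them). Applying Theorem \ref{main_result} after base change to $R'$, the regular models of $C_{K'}$ and $C'_{K'}$ again have the same special fibre; the stable model is obtained from it by contracting chains of $(-2)$-curves, a purely combinatorial operation on the special fibre, so the two stable reductions agree. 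The $\mathrm{Gal}(K'/K)$-action is induced from the natural action on the base and hence matches under the isomorphism, producing an isomorphism of Galois representations. The statements on local Euler factors, root numbers and conductor exponents follow formally.

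The main obstacle is Galois-equivariance of the isomorphism of special fibres --- both for the arithmetic Frobenius of $k$ (entering the local Euler factor) and for the $\mathrm{Gal}(K'/K)$-action on the geometric special fibre of the stable reduction. The construction in Theorem \ref{main_result} should yield an isomorphism of $\pi^{M}$-thickenings of the special fibres for $M$ growing with $N$, not merely of the special fibres themselves, and this refinement is what delivers the equivariance. A secondary technical point is that base change of the minimal regular model to the ramified extension $R'$ need not remain regular, so one may need to re-run the regular-model argument directly over $R'$, or else extract the stable reduction via a more formal-geometric route.
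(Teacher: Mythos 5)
Both halves of your plan diverge from the paper, and both leave real gaps.

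For the fudge factor, your description of $v(\lambda)$ as ``the order of vanishing of $\omega_1\wedge\cdots\wedge\omega_g$ along the special fibre'' is not how the quantity is computed: $\lambda$ is a change-of-basis determinant between two full-rank $R$-lattices inside the one-dimensional space $\bigwedge^g\Omega^1_{C/K}(C)$, namely $\bigwedge^g\omega_{\mathcal{C}/R}(\mathcal{C})$ and the line through $\omega_1\wedge\cdots\wedge\omega_g$, so the special fibre may have many components and there is no single ``order of vanishing.'' More importantly, the step ``the two resulting sections agree on the common special fibre, so $v(\lambda)=v(\lambda')$'' is not an argument. Having the same special fibre says nothing directly about the lattice $\omega_{\mathcal{C}/R}(\mathcal{C})\subseteq\Omega^1_{C/K}(C)$, which is intrinsically a statement about valuations of functions on the whole model. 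The paper instead characterises membership in the lattice by regularity along each component $\Gamma$ (Theorem~\ref{reg_diff_char}), works out an explicit local criterion in terms of the Jacobian-type generators $u_{I,j}$ of the canonical sheaf, and proves in Lemma~\ref{differential_lemma} that for a \emph{fixed} test differential $\omega$ the property $\omega\in W_{\mathcal{C},\Gamma}$ is the same for $\mathcal{C}$ and $\mathcal{C}'$ once $N$ is large, where the bound depends on $\omega$. To go from this pointwise statement to equality of lattices, Theorem~\ref{lattice_local_constancy} tests a basis of $W_{\mathcal{C},\Gamma}$ together with a set of coset representatives of $\pi^{-1}W_{\mathcal{C},\Gamma}/W_{\mathcal{C},\Gamma}$, and this is precisely where the finiteness of $k$ is used. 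Your sketch never invokes finiteness of $k$, yet the theorem requires it; that is a sign the argument you outline is missing the real mechanism.

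For the Galois representation, you propose a genuinely different route: pass to a stable-reduction extension $K'/K$, argue that $H^1_{\acute{e}t}$ is determined by the stable reduction with its $\mathrm{Gal}(K'/K)$-action, and try to match the stable reductions by contracting $(-2)$-curves. You then honestly flag the main obstacle yourself, namely Galois-equivariance of the special-fibre isomorphism, and that obstacle is real: Theorem~\ref{main_result} produces an abstract isomorphism of special fibres and nothing in the construction gives equivariance for the $\mathrm{Gal}(K'/K)$- or Frobenius-actions. The paper sidesteps this entirely via \cite[Theorem~2(iii)]{MR4898315}: the $G_K$-representation is determined by the point counts $\#\overline{\mathcal{C}}_L(\mathbb{F}_L)$ over a fixed finite list of extensions $L/K$, where $\overline{\mathcal{C}}_L$ is the special fibre of a regular model of $C_L$. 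Since a point count is a numerical invariant of the special fibre as an abstract $\mathbb{F}_L$-scheme, no equivariance is needed; one only has to run Theorem~\ref{main_result} over each $\mathcal{O}_L$ (your secondary concern about base change is handled exactly as you suggest, by assuming $\mathcal{X}$ smooth over $\mathcal{O}_K$ so that $\mathcal{X}_{\mathcal{O}_L}$ stays regular). So your stable-reduction plan would require substantial extra work that the paper's point-counting argument makes unnecessary.
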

\begin{remark}
	The BSD fudge factor is computed with reference to a basis of differentials on the curve. In the theorem above, we have to take a common basis of differentials on $C$ and $C'$, by which we mean bases that arise as the restriction of a common set of differentials on the ambient space in which both $C$ and $C'$ are embedded. (In the language of Section \ref{section_fudge_factor}, they are both good for $S$, for a set of differentials $S$ on the ambient space. See Definition \ref{defn_good_for_S}). For a more specific statement for hyperelliptic curves, see Corollary \ref{fudge_factor_hyperelliptic}.
\end{remark}

The motivation for these results is twofold. First, it enables us to perform certain `global-to-local' arguments. Consider a smooth projective curve $C$ over $\mathbb{Q}_p$. Suppose we'd like to show something about a local invariant of $C$ which is known in cases of `sufficiently good reduction'. By a small perturbation of the coefficients of the equations defining $C$, we may obtain a curve $C'$ defined over the global field $\mathbb{Q}$. Thus we may consider the reduction of $C'$ at all places $q\not=p$ as well. Assuming that $C'$ has `sufficiently good reduction' away from $p$, we know our desired result holds at these places. One can then argue using a product formula or other local-global result that the result must hold at $p$. Our result guarantees that if the perturbation was sufficiently small, $C$ will have the same local invariants as $C'$, meaning that the result holds for $C$ as well. Such an argument can be found e.g. in \cite[Section 11]{parity_abelian_surfaces} to approximate curves over local fields with curves defined over global fields, in order to study Jacobians of genus 2 curves.

The second motivation is computational: suppose one would like to compute the reduction type of a curve given by explicit equations in a computer algebra system. Our results imply that for this purpose, it suffices to know the coefficients of the defining equations up to some finite precision, which might make the computation easier. In fact, a result such as ours is necessary to compute reduction types over fields such as $\mathbb{Q}_p$ by a computer: as the computer can only store the coefficients defining our curve up to finite precision, local constancy ensures that this precision is enough to obtain the correct result.

For elliptic curves in Weierstrass form, local constancy of various invariants follows from Tate's algorithm \cite{tate_algorithm} (see also the exposition in \cite[Chapter IV, §9]{silverman_advanced}). It follows from the description of the algorithm that a sufficiently small perturbation in the Weierstrass equation coefficients doesn't change the special fibre of the minimal regular model, and also other invariants such as the Tamagawa number, valuation of the minimal discriminant, and conductor exponent.

Liu \cite{liu2025desingularizationdoublecoversregular} describes a similar algorithm for resolving the singularities of double covers of regular surfaces, giving explicit equations for the resulting surfaces. In particular, he produces an explicit algorithm, similar to Tate's algorithm, for finding a desingularization of Weierstrass models of hyperelliptic curves. This implies a local constancy result similar to ours in these cases.

For hyperelliptic curves $C: y^2 = f(x)$, Dokchitser, Dokchitser, Maistret and Morgan \cite[Theorem 1.29]{dokchitser2018arithmetic}
show, using the machinery of cluster pictures, that various invariants of $C$ are locally constant as functions of the coefficients of $f$, including the special fibre of the minimal regular model when $C$ is semistable and the residue characteristic $p$ is odd. This was extended to curves with tame potentially semistable reduction by Faraggi and Nowell \cite{Faraggi-Nowell}.

Kisin \cite{kisin_local_constancy} proves a result in a similar vein as ours, about the local constancy of Galois representations in $p$-adic families.

Our results generalise these, in that they apply to any smooth projective curve that is presented as a local complete intersection (vanishing locus of $n$ equations in $n+1$ variables) inside some ambient space. Moreover, we have no restrictions on the residue characteristic, and no restriction on the reduction type of our curve such as semistability. However our results only give information about the special fibre of some regular model. We will not attempt to extend these results to the special fibre of the minimal regular model or the minimal normal crossings model.

\subsection{Layout}
In Section \ref{section_closeness}, we formally define what we mean by families of curves, or equivalently by a small perturbation of the coefficients defining a curve. This is the notion of $N$-closeness. We also define the related notion of formal $N$-closeness, and establish some properties of these.

In Section \ref{section_automorphisms}, we show that models that are $N$-close are also formally close, by constructing appropriate formal automorphisms of the models of these curves. The results in these section follow closely the work of Cutkosy and Srinivasan \cite{Cutkosky_Srinivasan_1993} on the local equivalence of singularities, adapted to work in the setting of working over a discrete valuation ring rather than a field.

In Section \ref{section_blowups}, we show that formal closeness of curves persists through blowups, by constructing formal automorphisms on blowups of models.  In Section \ref{section_results} we use this and the results of the preceding section, along with known results on resolutions of singularities, to show our result on the local constancy of reduction type in families.

In Section \ref{section_fudge_factor} we give some background on the BSD fudge factor, and prove its local constancy in families, while in Section \ref{section_galois_reps} we prove the local constancy of Galois representations.
\subsection{Notation and conventions}
We will work in the following general setup: we let $R$ be an excellent discrete valuation ring, with field of fractions $K$, residue field $k$ and uniformiser $\pi$. In Section \ref{section_fudge_factor}, we will also fix an absolute value $|\cdot|: K\to \mathbb{R}_{\ge 0}$ compatible with the valuation.

The scheme $\spec R$ has a closed point $s$, and a generic point $\eta$. If $\mathcal{X}$ is an $R$-scheme, the fibre above $s$ is the special fibre $\mathcal{X}_s$, and the fibre above $\eta$ is the generic fibre $\mathcal{X}_\eta = \mathcal{X}_K$. We will always denote $R$-schemes by curly letters, and their generic fibres by the corresponding regular letter, i.e.  $R$-schemes $\mathcal{C},\mathcal{X}$ have generic fibres $\mathcal{X}_K = X, \mathcal{C}_K = C$. 

Recall that a \emph{model} of a (smooth, projective) curve $C$ over $K$ is an integral, proper, flat $R$-scheme $\mathcal{C}$ with generic fibre isomorphic to $C$. 

The hat notation $\comp{(\cdot)}$ will always denote $\pi$-adic completion of an $R$-algebra.

All rings and schemes will be assumed to be Noetherian, and all $R$-schemes are assumed to be of finite type.
\subsection*{Acknowledgements}
I would like to thank Vladimir Dokchitser for his helpful discussions, guidance and comments throughout the development of this paper. This work was supported by the Engineering and Physical Sciences Research Council [EP/S021590/1], the EPSRC Centre for Doctoral Training in Geometry and Number Theory (The London School of Geometry and Number Theory) at University College London.

\section{Families of curves}\label{section_closeness}
We first formalise our notion of $p$-adic families, or more precisely the notion of two curves related to each other by a small perturbation of the defining equations. This is done by the notion of $N$-closeness, which is defined for given models of these curves:
\begin{defn}\label{N-closeness}
	Let $\mathcal{Y},\mathcal{Y}'$ be $R$-schemes, both closed subschemes of an $R$-scheme $\mathcal{X}$ of codimension $r$. Also let $N\ge 1$ be an integer. We say that $\mathcal{Y}$ and $\mathcal{Y}'$ are \emph{$N$-close} if there is an affine open cover $\{\mathcal{X}_i\}_i$ of $\mathcal{X}$ such that for each $i$, 
	\[\mathcal{X}_i\cap \mathcal{Y} = V(f_1, \ldots, f_r)\qquad \text{and} \qquad \mathcal{X}_i\cap \mathcal{Y}' = V(\widetilde{f}_1, \ldots, \widetilde{f}_r)\]
	such that $r = \codim(\mathcal{Y}, \mathcal{X})$ is the codimension of $\mathcal{Y}, \mathcal{Y}'$ in $\mathcal{X}$, and $f_i-\widetilde{f}_i\in \pi^N\mathcal{O}_\mathcal{X}(\mathcal{X}_i)$.
	
	Let $C, C'$ be smooth projective curves over $K$ with respective models $\mathcal{C}, \mathcal{C}'$, which are both embedded in a regular $R$-scheme $\mathcal{X}$. If $\mathcal{C}$ and $\mathcal{C}'$ are $N$-close, then by slight abuse of language we will also say that the generic fibres $C$ and $C'$ are $N$-close.
\end{defn}

This encompasses the situation in Theorem \ref{intro_hyperelliptic} (and also \ref{intro_bihyperelliptic}, \ref{intro_projective}).
\begin{example}\label{2_hyperell_example}
	Let $R = \mathbb{Z}_p$, and $C$ be the hyperelliptic curve of genus $g$ given by
	\[C:\;y^2 = f(x)\]
	This in fact means that $C$ is glued together from two affine pieces
	\begin{align*}
		C_1:\; &\{y^2 = f(x)\}\subseteq \mathbb{A}^2_{x,y}\qquad\qquad\text{and}\\
		C_2:\; &\{v^2 = u^{2g+2}f(1/u)\} \subseteq \mathbb{A}^2_{u,v}
	\end{align*}
	along the maps $u = \frac{1}{x}, v = \frac{y}{x^{g+1}}$ . Similarly if $C'$ is the hyperelliptic curve
	\[C':\;y^2 = \widetilde{f}(x),\]
	then it is similarly glued together from two affine pieces $C'_1\subseteq \mathbb{A}^2_{x,y}$ and $C'_2\subseteq \mathbb{A}^2_{u,v}$. Note that both curves are embedded in the surface obtained by gluing the two planes $\mathbb{A}^2_{x,y}$ and $\mathbb{A}^2_{u,v}$ together via the maps above.
	
	If, without loss of generality, we take $f, \widetilde{f}\in \mathbb{Z}_p[x]$, then all the same equations define $\mathbb{Z}_p$-schemes, i.e. $C$ has a model $\mathcal{C}$ glued together from two affine pieces
	\begin{align*}
		\mathcal{C}_1 &= \spec \mathbb{Z}_p[x,y]/(y^2-f(x)) \subseteq \mathcal{X}_1 = \spec \mathbb{Z}_p[x,y]\\
		\mathcal{C}_2 &= \spec \mathbb{Z}_p[u,v]/(v^2-u^{2g+2}f(1/u)) \subseteq \mathcal{X}_2 = \spec \mathbb{Z}_p[u,v]
	\end{align*}
	again along the maps $u = \frac{1}{x}, v = \frac{y}{x^{g+1}}$. Similarly $C'$ has a model $\mathcal{C}'$ glued from similar pieces. Both models are contained in the $\mathbb{Z}_p$-scheme $\mathcal{X}$ obtained by gluing $\mathcal{X}_1$ and $\mathcal{X}_2$ along the above maps.
	
	It's easy to verify that $C$ and $C'$ are $N$-close if $f-\widetilde{f}\in p^N\mathbb{Z}_p[x]$ because e.g.
	\[\mathcal{C}\cap \mathcal{X}_1 = V(y^2-f(x))\qquad\text{and}\qquad \mathcal{C}'\cap \mathcal{X}_1 = V(y^2 - \widetilde{f}(x))\]
	and $(y^2-f(x))-(y^2-\widetilde{f}(x)) = f(x) - \widetilde{f}(x)\in p^N\mathbb{Z}_p[x, y]$. A similar relation holds in the other chart.
\end{example}

We turn to establishing some properties of $N$-closeness. 
\begin{prop}
	Let $\mathcal{Y},\mathcal{Y}'\subseteq \mathcal{X}$ be $R$-schemes. 
	\begin{enumerate}\label{closeness_properties}
		\item The condition of $N$-closeness gets stronger with increasing $N$: if $\mathcal{Y}, \mathcal{Y}'$ are $N$-close, then they are also $M$-close for any $M\le N$.
		\item If $\mathcal{Y}$ and $\mathcal{Y}'$ are 1-close, then they have the same special fibre (as subschemes of $\mathcal{X}_s$, i.e. they are the same $k$-scheme).
		\item Suppose $L/K$ is an extension of discretely valued fields, with valuation rings $R = \mathcal{O}_K\subseteq \mathcal{O}_L$. If $\mathcal{Y}, \mathcal{Y}'$ are $N$-close, then 
		\[\mathcal{Y}_{\mathcal{O}_L}, \mathcal{Y}'_{\mathcal{O}_L}\subseteq \mathcal{X}_{\mathcal{O}_L}\]
		are also $N$-close. Hence $N$-closenes is stable under base change.
	\end{enumerate}
\end{prop}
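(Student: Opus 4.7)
The three items are all formal bookkeeping from the definition of $N$-closeness, so my plan is to fix a single affine cover $\{\mathcal{X}_i\}$ witnessing the hypothesis, with local generators $f_1, \ldots, f_r$ for $\mathcal{Y}\cap\mathcal{X}_i$ and $\widetilde{f}_1, \ldots, \widetilde{f}_r$ for $\mathcal{Y}'\cap\mathcal{X}_i$, and then massage this same data to witness each conclusion.

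For (1), the trivial containment $\pi^N\mathcal{O}_\mathcal{X}(\mathcal{X}_i) \subseteq \pi^M\mathcal{O}_\mathcal{X}(\mathcal{X}_i)$ whenever $M\le N$ means exactly the same cover and the same generators witness $M$-closeness, with nothing to check.

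For (2), the hypothesis $f_j-\widetilde{f}_j \in \pi\mathcal{O}_\mathcal{X}(\mathcal{X}_i)$ says that $f_j$ and $\widetilde{f}_j$ have the same image in the reduction $\mathcal{O}_\mathcal{X}(\mathcal{X}_i)\otimes_R k$. Hence the ideals $(f_1,\ldots,f_r)$ and $(\widetilde{f}_1,\ldots,\widetilde{f}_r)$ reduce to the same ideal modulo $\pi$, so $\mathcal{Y}\cap(\mathcal{X}_i)_s = \mathcal{Y}'\cap(\mathcal{X}_i)_s$ as closed subschemes of $(\mathcal{X}_i)_s$. Since agreement of closed subschemes is Zariski-local, these local equalities glue to $\mathcal{Y}_s = \mathcal{Y}'_s$ inside $\mathcal{X}_s$.

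For (3), I would base-change the whole picture. The cover $\{\mathcal{X}_i\times_R\mathcal{O}_L\}$ is an affine open cover of $\mathcal{X}_{\mathcal{O}_L}$, and since closed immersions are stable under base change, the ideal cutting out $\mathcal{Y}_{\mathcal{O}_L}$ on $\mathcal{X}_i\times_R\mathcal{O}_L$ is generated by the images of $f_1,\ldots,f_r$ in $\mathcal{O}_\mathcal{X}(\mathcal{X}_i)\otimes_R\mathcal{O}_L$, and similarly for $\mathcal{Y}'_{\mathcal{O}_L}$. Writing $\pi_K = u\pi_L^e$ for a unit $u\in\mathcal{O}_L^\times$ and ramification index $e\ge 1$, the differences $f_j-\widetilde{f}_j$ still lie in $\pi_K^N(\mathcal{O}_\mathcal{X}(\mathcal{X}_i)\otimes_R\mathcal{O}_L) = \pi_L^{eN}(\cdots) \subseteq \pi_L^N(\cdots)$ after base change, giving $N$-closeness (and in fact $eN$-closeness, which by part (1) is at least as strong). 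Codimension is preserved because $\mathcal{O}_L/R$ is flat, so regular sequences stay regular sequences, and the claimed value of $r$ persists. The last sentence (stability under base change) then follows, as the $R$-subalgebras of $\mathcal{O}_L$ one might base-change along are captured by this case.

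The only step where anything beyond direct unwinding of the definition is needed is (3), and even there the substantive input is just the flatness of $\mathcal{O}_L$ over $R$ and the elementary identity $\pi_K=u\pi_L^e$; I do not anticipate any real obstacle.
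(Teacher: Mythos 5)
Your proposal is correct and follows essentially the same route as the paper, which simply declares (1) and (2) ``clear from the definitions'' and for (3) passes to the base-changed cover and observes that $f_i\otimes 1 - \widetilde f_i\otimes 1 \in \pi_K^N(A\otimes_{\mathcal O_K}\mathcal O_L)$. You are slightly more careful on two points that the paper glosses over: you explicitly convert $\pi_K^N$ into a power of the uniformiser $\pi_L$ of $\mathcal O_L$ (giving $eN$-closeness, hence $N$-closeness by part (1)), which is the honest reading of the definition over $\mathcal O_L$; and you note that the codimension appearing in the definition is preserved under the flat base change $\mathcal O_K\to\mathcal O_L$. Both are minor clarifications rather than new ideas, and the underlying argument is the same.
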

\begin{proof}
	(1) and (2) are clear from the definitions. For (3), if $\spec A = U\subseteq \mathcal{X}$ is an affine open, then $U_{\mathcal{O}_L} = \spec (A\otimes_{\mathcal{O}_K}\mathcal{O}_L)$ is an affine open on $\mathcal{X}_{\mathcal{O}_L}$. By $N$-closeness we have that
	\begin{align*}
		U\cap \mathcal{C} &= V(f_1, \ldots, f_r)\\
		U\cap \mathcal{C'} &= V(\widetilde{f}_1, \ldots, \widetilde{f}_r)
	\end{align*}
	with $f_i-\widetilde{f}_i\in \pi^N A$. Clearly it's also true that
	\begin{align*}
		U_{\mathcal{O}_L} \cap \mathcal{C}_{\mathcal{O}_L} &= V(f_1\otimes 1, \ldots, f_r\otimes 1)\\
		U_{\mathcal{O}_L} \cap \mathcal{C}'_{\mathcal{O}_L} &= V(\widetilde{f}_1\otimes 1, \ldots, \widetilde{f}_r\otimes 1)
	\end{align*}
	and we have that $f_i\otimes 1 - \widetilde{f}_i\otimes 1 \in \pi^N(A\otimes_{\mathcal{O}_K} \mathcal{O}_L)$.
\end{proof}

Next, we define the related notion of formal $N$-closeness. This means that locally on the ambient space $\mathcal{X}$, there are formal automorphisms taking $\mathcal{C}$ to $\mathcal{C}'$, which are congruent to the identity modulo $\pi^{N}$. By a formal automorphism, we mean an automorphism of the $\pi$-adic completions.

In what follows, $\comp{(\cdot)}$ denotes $\pi$-adic completion.
\begin{defn}\label{formal_N-closeness}
	Let $\mathcal{Y},\mathcal{Y}'$ be $R$-schemes, both closed subschemes of a regular integral $R$-scheme $\mathcal{X}$. Suppose $\mathcal{X}$ is covered by affine open subschemes $\mathcal{X}_i = \spec A_i$ such that $\mathcal{X}_i\cap \mathcal{Y} = \spec A_i/I_i$ and $\mathcal{X}_i\cap \mathcal{Y}' = \spec A_i/I'_i$. Let $N\ge 1$ be an integer. We say that $\mathcal{Y}$ and $\mathcal{Y}'$ are \emph{formally $N$-close} if there are
	\begin{enumerate}
		\item integral $R$-algebras $C_i$ with surjections $p_i:C_i\onto A_i$,
		\item automorphisms $\sigma_i: \comp{C_i}\to \comp{C_i}$ 
	\end{enumerate}
	satisfying the following properties:
	\begin{itemize}
		\item for each $i$, if $p_i^{-1}(I_i) = J_i$ and $p_i^{-1}(I_i') = J_i'$ then we have
		\[\sigma_i(\comp{J_i}) = \comp{J'_i}\]
		\item for each $i$, 
		\[\sigma_i \equiv \id \mod \pi^{N}.\]
	\end{itemize}
\end{defn}
We can also establish some properties of formal $N$-closeness:
\begin{prop}\label{formal_closeness_properties}
	Let $\mathcal{Y}, \mathcal{Y}'\subseteq \mathcal{X}$ be $R$-schemes.
	\begin{enumerate}
		\item The condition of formal $N$-closeness also gets stronger with increasing $N$: if $\mathcal{Y}$ and $\mathcal{Y}'$ are formally $N$-close, then they are also formally $M$-close for any $M\le N$.
		\item Using the notation of the above definition, we must have $I_i + \pi^NA_i = I_i'+\pi^NA_i$. In particular if $\mathcal{Y}$ and $\mathcal{Y}'$ are formally 1-close, then they have the same special fibre (again as closed subschemes of $\mathcal{X}_s$).
	\end{enumerate}
\end{prop}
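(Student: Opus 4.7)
Claim (1) is immediate: for $M \le N$ we have $\pi^N \comp{C_i} \subseteq \pi^M \comp{C_i}$, so the congruence $\sigma_i \equiv \id \mod \pi^N$ implies $\sigma_i \equiv \id \mod \pi^M$, while all other data (the $C_i$, $p_i$, $\sigma_i$, and the equality $\sigma_i(\comp{J_i}) = \comp{J_i'}$) are unchanged. The substantive claim is (2).

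The plan for (2) is to transport the congruence $\sigma_i \equiv \id \mod \pi^N$ down to an equality of ideals in $A_i$ in three stages. First, I promote it to an equality in $\comp{C_i}$. The key subsidiary observation is that $\sigma_i^{-1} \equiv \id \mod \pi^N$ as well: setting $y = \sigma_i(x)$ gives $\sigma_i^{-1}(y) - y = x - y = -(\sigma_i(x) - x) \in \pi^N \comp{C_i}$. Given any $x \in \comp{J_i}$, the element $\sigma_i(x)$ lies in $\comp{J_i'}$ and differs from $x$ by something in $\pi^N \comp{C_i}$, whence $x \in \comp{J_i'} + \pi^N \comp{C_i}$; the symmetric argument with $\sigma_i^{-1}$ (which sends $\comp{J_i'}$ to $\comp{J_i}$) gives the reverse inclusion, so
\[\comp{J_i} + \pi^N \comp{C_i} = \comp{J_i'} + \pi^N \comp{C_i}.\]

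The second stage is to push this forward along the completed surjection $\comp{p_i}: \comp{C_i} \onto \comp{A_i}$, noting that $\comp{p_i}(\comp{J_i}) = \comp{I_i}$ and $\comp{p_i}(\comp{J_i'}) = \comp{I_i'}$: this uses only that $p_i$ is surjective with $p_i(J_i) = I_i$ (and likewise for $J_i'$) together with exactness of $\pi$-adic completion on Noetherian rings, which holds by the paper's standing Noetherian assumption. This yields
\[\comp{I_i} + \pi^N \comp{A_i} = \comp{I_i'} + \pi^N \comp{A_i}.\]
The third stage is to descend via the canonical isomorphism $A_i / \pi^N A_i \cong \comp{A_i} / \pi^N \comp{A_i}$: since ideals of $A_i$ containing $\pi^N A_i$ correspond to ideals of $A_i/\pi^N A_i$, this yields the desired equality $I_i + \pi^N A_i = I_i' + \pi^N A_i$.

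The consequence for special fibres at $N = 1$ is then immediate: $I_i + \pi A_i = I_i' + \pi A_i$ means $I_i$ and $I_i'$ define the same ideal in the coordinate ring $A_i/\pi A_i$ of $\mathcal{X}_i \cap \mathcal{X}_s$, so $\mathcal{Y}$ and $\mathcal{Y}'$ cut out the same subscheme on each chart of the given affine cover, and these agree on overlaps and glue to give $\mathcal{Y}_s = \mathcal{Y}'_s$ as subschemes of $\mathcal{X}_s$. The only step requiring any technical care is the descent (third stage), which is where the Noetherian hypothesis plays an essential role; everything else is a straightforward unwinding of definitions.
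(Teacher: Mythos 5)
Your proof is correct and takes essentially the same approach as the paper's: both use the formal automorphism $\sigma_i \equiv \id \bmod \pi^N$ together with $\sigma_i(\comp{J_i}) = \comp{J_i'}$ to obtain an equality of ideals at the completed level and then descend. The paper descends in $C_i$ first (using that ideals containing $\pi^N C_i$ are $\pi$-adically closed) and then transfers to $A_i$, whereas you transfer to $\comp{A_i}$ first and then reduce mod $\pi^N$ — equivalent bookkeeping for the same core idea.
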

\begin{proof}
	(1) is clear from the definition. For (2), the claim is equivalent to $J_i + \pi^NC_i = J_i' + \pi^NC_i$, and this is true because both ideals are $\pi$-adically closed, and their completions are equal.
\end{proof}
\begin{remark}
	Notice that both $N$-closeness and formal $N$-closeness for $\mathcal{Y}, \mathcal{Y}'\subseteq \mathcal{X}$ implies that $\mathcal{X}$ has an affine open cover $\mathcal{X}_i = \spec A_i$ such that 
	\[\mathcal{Y}\cap \mathcal{X}_i = \spec A_i/I_i\qquad\text{and}\qquad\mathcal{Y}'\cap\mathcal{X}_i = \spec A_i/I_i'\]
	for ideals $I_i, I_i'$ with $I_i+ (\pi^N) = I_i' + (\pi^N)$. 
\end{remark}

We have the following property of closeness and formal $N$-closeness as they relate to regularity:
\begin{prop}\label{closeness_regularity}
	Let $\mathcal{Y}, \mathcal{Y}'\subseteq \mathcal{X}$ be  2-close or formally 2-close. Then if $x\in \mathcal{Y}_s$ is a point (hence also $x\in \mathcal{Y}'_s$), and $\mathcal{Y}$ is regular at $x$, then $\mathcal{Y}'$ is also regular at $x$.
\end{prop}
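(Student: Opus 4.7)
The plan is to treat the two cases separately, reducing each to the standard criterion that a quotient $\mathcal{O}_{\mathcal{X}, x}/I$ of the regular local ring $\mathcal{O}_{\mathcal{X}, x}$ is regular if and only if $I$ is generated by part of a regular system of parameters, equivalently if and only if the image of $I$ in $\mathfrak{m}_x/\mathfrak{m}_x^2$ has a basis that extends to one of $\mathfrak{m}_x/\mathfrak{m}_x^2$. Since $x\in \mathcal{Y}_s$ lies on the special fibre, we have $\pi\in \mathfrak{m}_x$ and hence $\pi^2\in \mathfrak{m}_x^2$; this is the source of the `$2$' in the hypothesis.

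The $2$-close case is straightforward. Working in an affine chart $\mathcal{X}_i = \spec A_i$ containing $x$, I would write $I = (f_1, \ldots, f_r)$ and $I' = (\tilde f_1, \ldots, \tilde f_r)$ with $f_j - \tilde f_j \in \pi^2 A_i$. Localising at $x$, the inclusion $\pi^2 \mathcal{O}_{\mathcal{X}, x}\subseteq \mathfrak{m}_x^2$ forces $\bar f_j = \bar{\tilde f_j}$ in $\mathfrak{m}_x/\mathfrak{m}_x^2$. Regularity of $\mathcal{Y}$ at $x$ is equivalent to $\bar f_1, \ldots, \bar f_r$ being linearly independent there; the same then holds for the $\bar{\tilde f_j}$, so $\tilde f_1, \ldots, \tilde f_r$ is part of a regular system of parameters and $\mathcal{Y}'$ is regular at $x$.

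The main obstacle is the formally $2$-close case, where $I, I'$ carry no distinguished finite generating sets and we control them only via the formal automorphism $\sigma_i: \comp{C_i}\to \comp{C_i}$. My plan is to descend $\sigma_i$ to an automorphism of the complete local ring at $x$. Since $\sigma_i\equiv \id \pmod\pi$, it fixes the maximal ideal $\hat \mathfrak{m} = \mathfrak{m}_{\tilde x}\comp{C_i}$ corresponding to $x$ (where $\tilde x\in \spec C_i$ is the preimage of $x$ under $p_i$, which lies on the special fibre). Hence $\sigma_i$ localises and then $\hat\mathfrak{m}$-adically completes to an automorphism of $\widehat{(\comp{C_i})_{\hat \mathfrak{m}}}^{\hat \mathfrak{m}}$, which by the standard identification
\[\widehat{(\comp{C_i})_{\hat \mathfrak{m}}}^{\hat \mathfrak{m}} \;\cong\; \widehat{(C_i)_{\mathfrak{m}_{\tilde x}}}^{\mathfrak{m}_{\tilde x}}\]
(both rings are $\varprojlim_n C_i/\mathfrak{m}_{\tilde x}^n$) is the $\mathfrak{m}_{\tilde x}$-adic completion of the local ring of $\spec C_i$ at $\tilde x$. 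The induced automorphism swaps the closures of $J_i$ and $J_i'$, so quotienting and using $C_i/J_i = A_i/I_i$ produces an isomorphism of $\mathfrak{m}_x$-adic completions $\widehat{\mathcal{O}_{\mathcal{Y}, x}}^{\mathfrak{m}_x}\cong \widehat{\mathcal{O}_{\mathcal{Y}', x}}^{\mathfrak{m}_x}$. Since regularity of a Noetherian local ring passes to and from its completion, $\mathcal{Y}'$ is regular at $x$.
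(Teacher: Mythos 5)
Your proof is correct, and in the formally $2$-close case it takes a genuinely different route from the paper. The paper treats both cases uniformly: Proposition \ref{formal_closeness_properties}(2) gives $I + (\pi^2) = I' + (\pi^2)$ in $\mathcal{O}_{\mathcal{X},x}$ whether the closeness is literal or formal, and since $\pi^2 \in \mathfrak{m}_x^2$ this forces $\mathfrak{m}_x^2 + I = \mathfrak{m}_x^2 + I'$, hence an isomorphism of cotangent spaces of $\mathcal{Y}$ and $\mathcal{Y}'$ at $x$; together with equality of Krull dimensions this gives regularity. Your $2$-close argument is this same cotangent-space comparison, just phrased via the chosen generators $f_j, \tilde f_j$. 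Your formal case, on the other hand, descends $\sigma_i$ through localisation and $\mathfrak{m}_{\tilde x}$-adic completion to obtain $\widehat{\mathcal{O}_{\mathcal{Y},x}} \cong \widehat{\mathcal{O}_{\mathcal{Y}',x}}$ directly. This costs more bookkeeping (the two-stage completion identification $\widehat{(\comp{C_i})_{\hat{\mathfrak{m}}}} \cong \widehat{(C_i)_{\mathfrak{m}_{\tilde x}}}$, checking $\sigma_i$ preserves $\hat{\mathfrak{m}}$ and that the strict images of $J_i$, $J_i'$ are swapped) but yields strictly more: the full completed local rings agree, not just regularity, and your argument only uses $\sigma_i \equiv \id \pmod{\pi}$, so it in fact applies to \emph{formally $1$-close} schemes, whereas the paper's cotangent-space route genuinely needs the exponent $2$. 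The paper's choice to route everything through the elementary ideal equality $I + (\pi^2) = I' + (\pi^2)$ is shorter and avoids the completion technology, at the modest price of not extracting the stronger conclusion.
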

\begin{proof}
	Let $\mathfrak{n}, \mathfrak{n}_\mathcal{Y}, \mathfrak{n}_{\mathcal{Y}'}$ be the maximal ideals of the local rings $\mathcal{O}_{\mathcal{X}, x}, \mathcal{O}_{\mathcal{Y}, x} , \mathcal{O}_{\mathcal{Y}', x}$ respectively. Because $x$ is on the special fibre, $\pi\in \mathfrak{n}$. Both 2-closeness and formal 2-closeness implies that there are ideals $I,I'\subseteq \mathfrak{n}$ such that $I + (\pi^2) = I'+(\pi^2)$ and
	\[\mathcal{O}_{\mathcal{Y}, x} = \mathcal{O}_{\mathcal{X}, x}/I \qquad\text{and}\qquad \mathcal{O}_{\mathcal{Y}', x} = \mathcal{O}_{\mathcal{X}, x}/I'\]
	and moreover we have that
	\[\dim \mathcal{O}_{\mathcal{Y}, x} = \dim \mathcal{O}_{\mathcal{Y}', x} = \dim \mathcal{O}_{\mathcal{X}, x} - r \]
	for some integer $r\ge 1$. Hence, since $\mathcal{O}_{\mathcal{Y}, x}$ is regular, it suffices to prove that
	\[\mathfrak{n}_\mathcal{Y}/(\mathfrak{n}_\mathcal{Y})^2\cong \mathfrak{n}_{\mathcal{Y}'}/ (\mathfrak{n}_{\mathcal{Y}'})^2\]
	as $\mathcal{O}_{\mathcal{X}, x}$-modules (since then they have the same dimension).
	
	However we have
	\begin{align*}
		\frac{\mathfrak{n}_\mathcal{Y}}{(\mathfrak{n}_\mathcal{Y})^2}&\cong \frac{\mathfrak{n} + I}{\mathfrak{n}^2 + I} = \frac{\mathfrak{n}}{\mathfrak{n}^2 +I}\\
		\frac{\mathfrak{n}_{\mathcal{Y}'}}{(\mathfrak{n}_{\mathcal{Y}'})^2}&\cong \frac{\mathfrak{n} + I'}{\mathfrak{n}^2 + I'} = \frac{\mathfrak{n}}{\mathfrak{n}^2 + I'}
	\end{align*}
	and $I + (\pi^2) = I' + (\pi^2)$ implies that the denominators are equal, proving the claim.
\end{proof}
\section{Formal automorphisms of models}\label{section_automorphisms}
In this section, let $\mathcal{Y}$ be an $R$-scheme with smooth generic fibre (such as a model of a smooth projective curve), embedded in some regular $R$-scheme $\mathcal{X}$. We would like to show that if an $R$-scheme $\mathcal{Y}'$ is $N$-close to $\mathcal{Y}$, then it's also formally close. A precise statement is as follows:
\begin{thm}\label{closeness_to_formal_closeness}
	Let $\mathcal{Y}$ be an $R$-scheme with smooth generic fibre, embedded in some regular $R$-scheme $\mathcal{X}$. Let $\mathcal{Y}'\subseteq \mathcal{X}$ be an $R$-scheme that is $N$-close to $\mathcal{C}$. Then there is a positive integer $d$ such that if $N\ge d$, then $\mathcal{Y}$ and $\mathcal{Y}'$ are formally $(N-d)$-close.
\end{thm}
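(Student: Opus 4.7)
My approach is local and uses a Newton-style iteration in the spirit of Cutkosky--Srinivasan, now carried out over the DVR $R$ rather than a field. Fix an affine chart $\mathcal{X}_i = \spec A$ provided by the $N$-closeness data, so $\mathcal{Y} \cap \mathcal{X}_i = V(f_1, \ldots, f_r)$ and $\mathcal{Y}' \cap \mathcal{X}_i = V(\widetilde{f}_1, \ldots, \widetilde{f}_r)$ with $f_j - \widetilde{f}_j \in \pi^N A$. Choose a polynomial $R$-algebra $C = R[x_1, \ldots, x_n]$ with a surjection $p: C \twoheadrightarrow A$, and lift to obtain $F_j, \widetilde{F}_j \in C$ with $F_j - \widetilde{F}_j \in \pi^N C$; set $J = p^{-1}(I)$ and $J' = p^{-1}(I')$. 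The goal is then to construct an $R$-algebra automorphism $\sigma$ of $\comp{C}$ with $\sigma(\comp{J}) = \comp{J'}$ and $\sigma \equiv \id \pmod{\pi^{N-d}}$, for a constant $d$ depending on $\mathcal{Y}$ alone.

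The constant $d$ will come from the smoothness of $\mathcal{Y}_K$. That hypothesis says the Jacobian matrix $\bigl(\partial F_j / \partial x_k\bigr)$ has rank $r$ at every point of the generic fibre of $V(F_1, \ldots, F_r) \subseteq \spec C$, so the ideal of $r\times r$ minors restricts to the unit ideal on $(C/J)[1/\pi]$ and therefore contains $\pi^d$ modulo $J$ for some $d \geq 0$. After $\pi$-adic completion, this yields a lifting statement: for any $(h_1, \ldots, h_r) \in \comp{C}^r$ there exist $v_1, \ldots, v_n \in \comp{C}$ with $\sum_k (\partial F_j/\partial x_k)\, v_k \equiv \pi^d h_j \pmod{\comp{J}}$ for every $j$. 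I would then produce $\sigma$ as a $\pi$-adic limit $\sigma = \lim_m \sigma_m$. Start with $\sigma_0 = \id$, so that $\sigma_0(F_j) - \widetilde{F}_j \in \pi^N \comp{C}$; inductively, given $\sigma_m$ with $\sigma_m(F_j) - \widetilde{F}_j \in \pi^{N_m}\comp{C} + \comp{J'}$, define a correction $\tau_m$ on generators by $\tau_m(x_k) = x_k + \pi^{N_m - d}\, v_k^{(m)}$, where the $v_k^{(m)}$ are chosen via the Jacobian lifting above so that the first-order Taylor expansion of each $F_j$ under $\tau_m$ cancels the leading $\pi^{N_m}$ part of the error modulo $\comp{J'}$. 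Setting $\sigma_{m+1} = \sigma_m \circ \tau_m$ strictly increases $N_m$, so the $\sigma_m$ converge $\pi$-adically to the desired $\sigma$, and the bound $\sigma_m \equiv \id \pmod{\pi^{N-d}}$ passes to the limit.

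The main obstacle is precision bookkeeping: each Newton step spends $d$ powers of $\pi$ on inverting the Jacobian, so one has to verify that the remaining gain at every step is strictly positive (this is what forces $N \geq d$ in the hypothesis), and that the corrections do not accumulate in a way that destroys the $\pi^{N-d}$ congruence already achieved. Two further subtleties need attention. First, the resulting $\sigma$ is a priori only an endomorphism of $\comp{C}$; that it is an \emph{automorphism} follows because $\sigma - \id$ sends generators into $\pi^{N-d}\comp{C}$, so an inverse can be built by the same Hensel-style iteration. Second, one must upgrade $\sigma(F_j) \in \comp{J'}$ to the ideal equality $\sigma(\comp{J}) = \comp{J'}$; this can be obtained either by running the symmetric construction with the roles of $\mathcal{Y}$ and $\mathcal{Y}'$ swapped, or by observing that $r$ elements of $\comp{C}$ that are $\pi$-adically close to $F_1, \ldots, F_r$, together with $\ker p$, generate $\comp{J}$. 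Since the affine cover of $\mathcal{X}$ is finite, taking $d$ to be the maximum of the chart-wise values yields a single integer that works for the global statement of the theorem.
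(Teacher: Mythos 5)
Your overall strategy---reducing to a local problem over $\comp{C}$ and running a Newton-type iteration driven by the $\pi^d$ coming from the Jacobian minors---is the same as the paper's (which follows Cutkosky--Srinivasan), and your observation that the $r\times r$ minors of $(\partial F_j/\partial x_k)$ restrict to the unit ideal on $(C/J)[1/\pi]$ is correct. However, there are two genuine gaps, both stemming from targeting the ideal $\comp{J'}$ rather than a specific generating tuple.

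First, you only iterate on the lifts $F_1, \ldots, F_r$ of the equations cutting out $\mathcal{Y}$ inside $\mathcal{X}$, but $\comp{J} = p^{-1}(I)\comp{C}$ is generated by these \emph{together with} the generators $k_1, \ldots, k_s$ of $\ker p$. Your $\sigma$ gives $\sigma(F_j) \in \comp{J'}$, but nothing controls $\sigma(k_l)$ beyond $\sigma(k_l) = k_l + \pi^{N-d}w_l$; the correction $\pi^{N-d}w_l$ need not lie in $\comp{J'}$, so $\sigma(\comp{J}) \subseteq \comp{J'}$ already fails. The two fixes you suggest do not close this: the ``symmetric construction'' produces a second automorphism $\sigma'$ that has no a priori relation to $\sigma^{-1}$, and the Nakayama-style observation about generators being $\pi$-adically close requires $\sigma(\ker p) \subseteq \comp{J'}$ (and in fact requires closeness modulo $\pi\comp{J'}$, which needs $\pi$ to be a nonzerodivisor modulo $J'$, i.e.\ flatness of $\mathcal{Y}'$, which is not assumed). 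The paper avoids all of this by treating the full preimages $J = p^{-1}(I)$, $J' = p^{-1}(I')$ as generated by a single regular sequence of length $m = r+s$ (including the common $\ker p$ generators, which satisfy $f_i - g_i = 0 \in \pi^N C$), and then constructing $\sigma$ with an \emph{exact} equality $\sigma(f_i) = g_i'$ for a specific generating set $g_i'$ of $J'$; the ideal equality $\sigma(\comp{J}) = \comp{J'}$ is then immediate.

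Second, the precision bookkeeping in your iteration has a circularity that the paper's more elaborate matrix manipulation is designed to avoid. In your step, choosing $v_k^{(m)}$ so that $\sum_k W_{jk}v_k^{(m)} \equiv -\pi^d e_j \pmod{\comp{J}}$ means $\sum_k W_{jk}v_k^{(m)} = -\pi^d e_j + q_j$ with $q_j \in \comp{J}$, and the correction contributes $\pi^{N_m - d}\sigma_m(q_j)$ to the new error. This term has order $\pi^{N_m - d}$---\emph{worse} than the $\pi^{N_m}$ you started with---and bounding it requires knowing that $\sigma_m(\comp{J})$ sits inside $\comp{J'}$ to high precision, which is precisely what you are trying to establish. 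One can patch this to obtain linear (not quadratic) gain of $N - 2d$ per step using $\comp{J} + \pi^N\comp{C} = \comp{J'} + \pi^N\comp{C}$, but it is far from immediate and still leaves the $\ker p$ issue above. The paper's route is different in detail: it first uses the identity $\pi^f = \sum c_j f_j + \sum d_\alpha m_\alpha$ to reduce (after replacing the $g_i$ by a unit-matrix combination $g_i'$) to the case $f_i \equiv g_i \pmod{\pi^{N-2f}L^2}$, where $L$ is the ideal of $m\times m$ minors; the extra factor of $L$ is exactly what lets the adjugate matrices $B_{\tau_\alpha}$ absorb the error, and the resulting polynomial system in the auxiliary variables $a^i_{\alpha\beta}$ is solved in one stroke by Hensel's lemma. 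If you want to salvage a direct Newton iteration without the $L^2$ reduction, you would need to (i) carry along all $m = r+s$ generators, (ii) aim for exact equalities $\sigma(f_i) = g_i'$ for a fixed generating tuple rather than membership in $\comp{J'}$, and (iii) verify that the error decays despite the $\pi^{-d}$ cost per step. At that point you have essentially reconstructed the paper's proof.
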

\begin{remark}
	As we will see from the proof, the integer $d$ above depends on $\mathcal{Y}, \mathcal{X}$ and also the cover realising the $N$-closeness of $\mathcal{Y}$ and $\mathcal{Y}'$.
\end{remark}
Since $\mathcal{X}$ is regular, $\mathcal{X}\to \spec R$ is a local complete intersection morphism \cite[Example 6.3.18]{liu2002algebraic}. Hence (by shrinking the $\mathcal{X}_i$ if necessary), we can assume that each $A_i$ is a quotient of a polynomial ring over $R$, i.e. there are surjections
\[p_i: C_i = R[x_1, \ldots, x_{n_i}]\onto A_i,\]
such that $K_i = \ker p_i$ is generated by a regular sequence. We will show the above theorem by constructing automorphisms of the $\comp{C_i}$. 

Take an $R$-scheme $\mathcal{Y}'$ that is $N$-close to $\mathcal{Y}$. Then by assumption we have that
\[\mathcal{Y}\cap \mathcal{X}_i = \spec A_i/I_i\qquad\text{and}\qquad\mathcal{Y}'\cap \mathcal{X}_i = \spec A_i/I_i'\]
where $I_i = (f_1, \ldots, f_r)$ and $I_i' = (\widetilde{f}_1, \ldots, \widetilde{f}_r)$ such that $f_i-\widetilde{f}_i\in \pi^NA_i$. In other words, both $I_i$ and $I_i'$ are generated by regular sequences.

Now since the kernel of $p_i: C_i \onto A_i$ is also generated by a regular sequence, so are the preimages $J_i = p_i^{-1}(I_i)$ and $J_i' = p_i^{-1}(I_i')$. Hence we are reduced to proving the following claim, a version of \cite[Theorem 5.1]{Cutkosky_Srinivasan_1993}:
\begin{prop}\label{CS}
	Let $C = R[x_1, \ldots, x_n]$ be a polynomial ring over $R$, and let $J,J'\subseteq C$ be ideals of height $m$, such that 
	\[J = (f_1, \ldots, f_m)\qquad \text{and}\qquad J' = (g_1, \ldots, g_m)\]
	with $f_i-g_i \in \pi^N C$. Suppose the $R$-scheme $\mathcal{X} = \spec C/J$ has smooth generic fibre. Then there is an integer $d\ge 1$ such that if $N\ge d$, then there is an automorphism $\sigma$ of $\comp{C}$ such that $\sigma(\comp{J}) = \comp{J'}$ and $\sigma \equiv \id \mod \pi^{N-d}$.
\end{prop}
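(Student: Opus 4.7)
The plan is to construct $\sigma$ by a $\pi$-adic Newton iteration, following the strategy of \cite[Theorem 5.1]{Cutkosky_Srinivasan_1993} but adapted from local-ring completion to uniformiser-adic completion. Writing $g_i = f_i + \pi^N h_i$ with $h_i \in C$, I seek $\sigma$ of the form $x_j \mapsto x_j + \epsilon_j$ with each $\epsilon_j \in \pi^{N-d}\comp{C}$; any such map is automatically an $R$-algebra automorphism of $\comp{C}$, since its inverse can be built by the standard formal inverse-function iteration in the $\pi$-adically complete ring $\comp{C}$. The task thus reduces to forcing $\sigma(f_i) \in \comp{J'}$ for every $i$.

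The engine is the consequence of generic-fibre smoothness. Since $\mathcal{X}_\eta = \spec(K \otimes_R C/J)$ is smooth of dimension $n-m$, the Jacobian $\Jac(f) = (\partial f_i/\partial x_j)$ has rank $m$ modulo $J$ after inverting $\pi$, so its ideal of $m \times m$ minors is the unit ideal in $K \otimes_R C/J$. Noetherianity supplies an integer $d_0 \geq 0$ with $\pi^{d_0}$ lying in that ideal of minors modulo $J$, and unpacking yields an $n \times m$ matrix $B$ over $C$ satisfying $\Jac(f) \cdot B \equiv \pi^{d_0} I_m \pmod{J}$, an approximate right inverse of the Jacobian with bounded $\pi$-adic loss. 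Starting from $\sigma_0 = \id$, given $\sigma_k$ with $\sigma_k(f_i) - g_i \in \pi^{N_k}\comp{C} + \comp{J}$, the Taylor expansion $\sigma_{k+1}(f_i) - \sigma_k(f_i) = \sum_j (\partial_j f_i)\,\delta_j + O(\delta^2)$ with $\delta_j = \epsilon_j^{(k+1)} - \epsilon_j^{(k)}$ turns the next correction into a linear system solved using $B$, at a cost of $\pi^{d_0}$ precision. The quadratic term contributes error of order $\pi^{2(N_k - d_0)}$, so precision roughly doubles at each step; the hypothesis $J - J' \subseteq \pi^N C$ converts congruences modulo $\comp{J}$ into congruences modulo $\comp{J'}$ at a further cost of $\pi^N$; and in the $\pi$-adic limit one obtains $\sigma$ with $\sigma(f_i) \in \comp{J'}$ and $\sigma \equiv \id \pmod{\pi^{N-d}}$ for some $d$ of size roughly $2d_0$, depending only on $\mathcal{X}$, $J$ and the chosen cover. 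The reverse inclusion $\sigma(\comp{J}) \supseteq \comp{J'}$ follows by running the analogous construction with $f$ and $g$ swapped and composing the resulting automorphisms.

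The main obstacle will be the $\pi$-adic bookkeeping that keeps the doubled precision honest despite the congruences at each step being modulo $\comp{J}$ rather than $0$, and that makes $d$ depend only on the geometry, not on $N$. A secondary subtlety is extracting the approximate inverse $B$ globally on $\spec C$ rather than at each point; this is possible because $\spec C$ is Noetherian and of finite type over $R$, so finitely many Jacobian minors suffice to express $\pi^{d_0}$ modulo $J$.
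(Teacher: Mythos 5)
Your overall strategy — build $\sigma$ by a Newton/Hensel iteration using the Jacobian of the $f_i$ together with smoothness to control the precision lost in inverting it — is exactly the one the paper takes (following Cutkosky–Srinivasan). But two of your steps, as stated, don't close, and they are not cosmetic.

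The main gap is the target of the iteration. You aim for $\sigma(f_i)\in\comp{J'}$, which only produces $\sigma(\comp{J})\subseteq\comp{J'}$, and you then claim to get the reverse inclusion by ``running the analogous construction with $f$ and $g$ swapped and composing.'' That produces a \emph{different} automorphism $\tau$ with $\tau(\comp{J'})\subseteq\comp{J}$; it does not show that \emph{this} $\sigma$ satisfies $\sigma(\comp{J})\supseteq\comp{J'}$, nor that $\tau=\sigma^{-1}$. The paper sidesteps this entirely by first performing a linear preconditioning step: using $\pi^f\in L+J$ (with $L$ the ideal of $m\times m$ Jacobian minors), one writes $G\equiv(I+C)F\pmod{\pi^{N-2f}L^2}$ and replaces the generators of $J'$ by $G'=(I+C)^{-1}G$, which generate the same ideal of $\comp{C}$ but now satisfy $g_i'-f_i\in\pi^{N-2f}L^2$. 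Then $\sigma$ is built so that $\sigma(f_i)=g_i'$ \emph{exactly}; equality $\sigma(\comp{J})=\comp{J'}$ follows because both sides are the ideal generated by the images of a fixed generating set. Related to this, your bookkeeping ``$\sigma_k(f_i)-g_i\in\pi^{N_k}\comp{C}+\comp{J}$'' is working modulo the wrong ideal (and even with $\comp{J'}$ in place of $\comp{J}$ you'd land back at the one-sided inclusion); the cleaner invariant to track is $\sigma_k(f_i)-g_i'\in\pi^{N_k}\comp{C}$ outright.

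The second issue is that you haven't explained why the quadratic correction term in the Taylor expansion can be absorbed at each step. The Jacobian $W=(\partial f_i/\partial x_j)$ is $m\times n$, only approximately right-invertible, and the cost of each inversion is $\pi^{d_0}$; a naive Newton step leaves a quadratic error that is not obviously expressible in a form the next application of $B$ can cancel. The paper's ansatz is specifically designed for this: the unknown shift $u_j$ is taken of the form $u=AT$ with $A$ built from adjugates of the $m\times m$ submatrices and $T$ a linear form in the minors $m_\alpha$. This forces $WAT$ to be a sum of terms $a^k_{\alpha\beta}m_\alpha m_\beta$ and forces the quadratic part of the Taylor expansion to also be a quadratic form in the $m_\alpha$; combined with $g_i'-f_i\in L^2$, the whole equation $G'=F+Wu+O(u^2)$ collapses to a finite quadratic system in the $a^k_{\alpha\beta}$ whose linearization at $0$ is the identity, and one application of Hensel's lemma (Theorem~\ref{Hensel}) solves it in one go. Without something playing the role of this ``everything lives in $L^2$'' structure, the quadratic convergence you assert is not justified. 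You would do well to introduce the preconditioning step and the minor-based ansatz; with those in place your argument becomes essentially the paper's.
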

\begin{proof}
	The proof that follows is very similar to the proof of Theorem 5.1 in \cite{Cutkosky_Srinivasan_1993}, but adapted to the case of working over a discrete valuation ring $R$ rather than a field $k$. We give the full proof for completeness. We take $N\gg 0$, specifying further throughout the proof.
	
	Let $W = \left(\frac{\partial f_i}{\partial x_j}\right)$ be the $m\times n$ Jacobian matrix of partial derivatives of the $f_i$. We let $\{\tau_1, \ldots, \tau_p\}$ be exactly the $m$-element subsets of $\{1, 2, \ldots, n\}$, so that $p = \binom{n}{m}$. Also set $W(\tau_\alpha)$ to be the $m\times m$ submatrix of $W$ consisting of the rows of $W$ indexed by the elements of $\tau_\alpha$. We let $m_\alpha = \det W(\tau_\alpha)$, and $L = (m_1, \ldots, m_p)$ be the matrix generated by the $m\times m$ minors of the Jacobian.	
	
	Since $\mathcal{X}$ has smooth generic fibre, the ideal $L + (f_1, \ldots, f_m)$ contains some power $\pi^f$ of $\pi$. So we can write
	\[\pi^f = \sum_{j=1}^n c_jf_j + \sum_{\alpha=1}^p d_\alpha m_\alpha\]
	For some $c_j, d_\alpha\in C$. Then $g_i-f_i \in \pi^N C$ means that we can write for each $i$,
	\begin{align*}
		g_i -f_i &= \pi^{N-2f}\left(\sum_{j=1}^n c_jf_j + \sum_{\alpha=1}^p d_\alpha m_\alpha\right)^2\\
		&\equiv\sum_{j=1}^N c_{ij}f_i \mod{\pi^{N-2f}L^2}
	\end{align*}
	where $c_{ij}\in \pi^{N-2f}C$, and we take $N\ge 2f$. We therefore have 
	\[G \equiv (I + C)F \mod \pi^{N-2f}L^2\]
	where $G$ is the column vector $(g_1, \ldots, g_m)$, similarly for $F$, $C$ is the $m\times m$ matrix $(c_{ij})$, and $I$ is the $m\times m$ identity matrix. Now since $c_{ij}\in \pi^{N-2f}C$, the matrix $I+C$ is invertible over $\widehat{C}$, taking $N\ge 2f+1$. So we may replace $G$ by 
	\[G' = (I+C)^{-1}G,\]
	since the entries $g_i'$ of $G'$ generate the same ideal of $\widehat{C}$ as the $g_i$. Moreover the above relation gives us that $g_i'\equiv f_i \mod {\pi^{N-2f}L^2}$.
	
	Hence we may WLOG assume that $f_i\equiv g_i \mod {\pi^{N-2f}L^2}$. This means that we can write
	\[g_i = f_i + \sum_{\alpha, \beta=1}^p d^i_{\alpha\beta}m_\alpha m_\beta\]
	for some $d^i_{\alpha\beta}\in \pi^{N-2f}C$. If we let $D_{\alpha\beta}$ be the $m\times 1$ matrix
	\[D_{\alpha\beta} = \left[\begin{array}{c}
		d^1_{\alpha\beta}\\
		\vdots\\
		d^m_{\alpha\beta}
	\end{array}\right]\]
	then the previous equation becomes
	\[G = F + \sum_{\alpha, \beta = 1}^p D_{\alpha\beta}m_\alpha m_\beta.\tag*{($\dagger$)}\]
	
	Now construct some matrices as follows: let $B_{\tau_\alpha}$ be the $n\times m$ matrix whose rows corresponding to elements of $\tau_\alpha$ are the rows of $\adj(W(\tau_\alpha))$ in the same order, and the remaining rows are zero. This is done so that the resulting $n\times mp$ matrix
	\[A = \left[\begin{array}{c|c|c}
		B_{\tau_1}&\ldots&B_{\tau_p}
	\end{array}\right]\]
	Satisfies 
	\[WA = \left[\begin{array}{c|c|c}
		m_1I&\ldots&m_pI\\
	\end{array}\right]\]
	where $I$ is the $m\times m$ identity matrix. Now consider the $mp\times 1$ column matrix
	\[T = \left[\begin{array}{c}
		T_1\\ \hline
		\vdots \\ \hline
		T_p
	\end{array}\right]\]
	where $T_\alpha$ is the $m\times 1$ column matrix
	\[(T_\alpha)_i= \sum_{\beta=1}^p a^i_{\alpha\beta}m_\beta\]
	for some indeterminates $a^i_{\alpha\beta}\in \widehat{C}$. Also set 
	\[AT = H = \left[\begin{array}{c}
		h_1(a^i_{\alpha\beta})\\
		\vdots\\
		h_n(a^i_{\alpha\beta})
	\end{array}\right]\]
	Using this notation, we have that 
	\[WH = WAT = \left[\begin{array}{c|c|c}
		m_1I&\ldots&m_pI\\
	\end{array}\right] T = \sum_{\alpha, \beta = 1}^p \left[\begin{array}{c}
		a^1_{\alpha\beta}\\
		\vdots\\
		a^m_{\alpha\beta}
	\end{array}\right]m_{\alpha}m_{\beta}.\tag{$\dagger\dagger$}\]
	Now we try to find the automorphism $\widehat{C}\to \widehat{C}$ taking $f_i$ to $g_i$. The automorphism will be of the form $x_i \mapsto x_i + u_i$ for some $u_i\in \widehat{C}$. So we would like to have $f_i(x + u) = g_i$. We have the Taylor expansion
	\[\left[\begin{array}{c}
		f_1(X + U)\\
		\vdots\\
		f_m(X + U)
	\end{array}\right] = F + W\left[\begin{array}{c}
		u_1\\
		\vdots\\
		u_n
	\end{array}\right] + 
	\sum_{i,j = 1}^n\left[\begin{array}{c}
		\lambda^1_{ij}\\
		\vdots\\
		\lambda^m_{ij}\\
	\end{array}\right]u_iu_j\]
	for some $\lambda_{ij}^k\in \widehat{C}$. Hence showing the existence of such a $U$ amounts to solving the system of equations
	\[G = F + W\left[\begin{array}{c}
		u_1\\
		\vdots\\
		u_n
	\end{array}\right] + \sum_{i,j = 1}^n \left[\begin{array}{c}
		\lambda^1_{ij}\\
		\vdots\\
		\lambda^m_{ij}
	\end{array}\right]u_iu_j\]
	for the $u_i$. We will set $u_i = h_i = h_i(a^k_{\alpha\beta})$, and try to solve the resulting system for the variables $a^k_{\alpha\beta}$. Thus we have 
	\[\left[\begin{array}{c}
		u_1\\
		\vdots\\
		u_n
	\end{array}\right] = H = AT\]
	and hence the equation we need to solve is
	\[G = F + WAT +  \sum_{i,j = 1}^n \left[\begin{array}{c}
		\lambda^1_{ij}\\
		\vdots\\
		\lambda^m_{ij}
	\end{array}\right]h_ih_j .\]
	Now we can express $WAT$ by $(\dagger\dagger)$, and we have that each $h_i$ is a linear combination of terms $a^k_{\alpha\beta}m_\gamma$, so this equation is equivalent to
	\[G = F + \sum_{\alpha, \beta = 1}^p\left[\begin{array}{c}
		a^1_{\alpha\beta}\\
		\vdots\\
		a^m_{\alpha\beta}
	\end{array}\right]m_\alpha m_\beta +  \sum_{k,l=1}^m\sum_{\alpha, \ldots, \zeta = 1}^p M^{kl}_{\alpha\beta\gamma\delta\epsilon\zeta} a^k_{\alpha\beta}a^l_{\gamma\delta}m_\epsilon m_\zeta\]
	where the $M^{kl}_{\alpha\beta\gamma\delta\epsilon\zeta}$ are $m\times 1$ column vectors with entries in $\comp{C}$. 
	
	By $(\dagger)$, it suffices to solve the system
	\[D_{\alpha\beta} = \left[\begin{array}{c}
		a^1_{\alpha\beta}\\
		\vdots\\
		a^m_{\alpha\beta}\\
	\end{array}\right]+\sum_{\gamma, \delta, \epsilon, \zeta = 1}^p M^{kl}_{\gamma\delta\epsilon\zeta\alpha\beta}a^k_{\gamma\delta}a^l_{\epsilon\zeta}\]
	for the variables $a^k_{\alpha\beta}$. This is equivalent to the system
	\[d^i_{\alpha\beta} = a^i_{\alpha\beta} + \sum_{k,l=1}^m\sum_{\alpha, \ldots, \zeta = 1}^p (M^{kl}_{\alpha\beta\gamma\delta\epsilon\zeta})_i a^k_{\alpha\beta}a^l_{\gamma\delta}m_\epsilon m_\zeta\]
	This system of equations has a solution by Hensel's lemma (Theorem \ref{Hensel}): indeed, $a^i_{\alpha\beta}=0$ is an approximate solution because $d^i_{\alpha\beta}\in\pi^{N-2f}C$, and the matrix of partial derivaties of the above polynomials evaluated at zero is the identity matrix. So we have a solution with $a^i_{\alpha\beta}\in \pi^{N-2f}\widehat{C}$.
	
	We thus have a map $\sigma: \widehat{C}\to \widehat{C}$ such that $\sigma(f_i) = g_i$. It is moreover given by $\sigma(x_i) = x_i + u_i$, where $u_i\equiv 0 \mod \pi^{N-2f}$, which implies that $\sigma \equiv \id \mod \pi^{N-2f}$. If $N\ge 2f + 2$, then we get $\sigma \equiv \id \mod \pi^2$, which implies that it's an automorphism by \cite[Lemma 5.0]{Cutkosky_Srinivasan_1993}. This proves our claim, with $d = 2f + 2$.
	
\end{proof}
Using this, the proof of Theorem \ref{closeness_to_formal_closeness} is easy.
\begin{proof}[Proof of Theorem \ref{closeness_to_formal_closeness}]
	The $N$-closeness gives us a cover by affine open sets $\mathcal{X}_i=\spec A_i$ such that
	\[\mathcal{Y}\cap \mathcal{X}_i = \spec A_i/I_i\qquad\text{and}\qquad\mathcal{Y}'\cap \mathcal{X}_i = \spec A_i/I_i'\]
	where $I_i = (f_1, \ldots, f_r)$ and $I_i' = (\widetilde{f}_1, \ldots, \widetilde{f}_r)$ are generated by regular sequences, and $f_i-\widetilde{f}_i\in \pi^NA_i$. 
	
	By refining this cover we can assume that for each $i$, we have a surjection $p_i: C_i\onto A_i$ with $C_i$ a polynomial ring over $R$, and the kernel of this projection is generated by a regular sequence (because $\mathcal{X}$ is regular, so $\mathcal{X}\to \spec R$ is a local complete intersection). Moreover by quasi-compactness of $\mathcal{X}$ we may take this cover to be finite. Put again $J_i = p_i^{-1}(I_i)$ and $J_i' = p_i^{-1}(I_i')$.
	
	By Proposition \ref{CS}, for each $i$ there is an integer $d_i$ such that if $N\ge d_i$, then there exists an automorphism $\sigma_i$ of $\comp{C_i}$ such that $\sigma_i(\comp{J_i}) = \comp{J_i'}$ and $\sigma_i \equiv \id \mod \pi^{N-d_i}$. Hence if we take $d = \max_i d_i$, then for $N\ge d$ it follows that $\mathcal{Y}$ and $\mathcal{Y}'$ are formally $(N-d)$-close.
\end{proof}
\section{Automorphisms of blowups}\label{section_blowups}
One can obtain regular models of a curve by starting from an arbitrary model, and resolving its singularities through blowups (see Theorem \ref{resolution}). Our aim in this section will be to show that formal closeness of models is preserved under blowups in some sense, so that this process gives us formally close regular models of curves. First we review the relevant properties of blowups.
\subsection{Blowups}
Let $X$ be a scheme, and $D\subseteq X$ be a closed subscheme. The blowup of $X$ along $D$ is a scheme $\bl_D X$ along with a morphism $p: \bl_D X\to X$. We call $D$ the centre of the blowup. We will not describe the construction of the blowup in detail, but just note the properties relevant for us, following §8.1 of \cite{liu2002algebraic}.

The blowup $\bl_DX$ can be computed locally, i.e if $U\subseteq X$ is an open set, and $p: \bl_D X\to X$ is the blowup, then 
\[\bl_{D\cap U} U \cong p^{-1}(U).\]
Thus it suffices to compute blowups of affine schemes. So suppose $X = \spec A$, and $D = \spec A/I$ for an ideal $I\subseteq A$. In this case we will also use the notation $\bl_{\spec A/I}\spec A = \bl_I A$. Pick generators $I = (h_1, \ldots, h_r)$ for $I$. Then the blowup $\bl_DX$ is the union of affine subschemes $\spec B_i$ for $i= 1, \ldots, r$ with 
\[B_i = \frac{A[T_1, \ldots, T_r]/(h_iT_1-h_1, \ldots, h_iT_r-h_r)}{\text{$h_i^\infty$-torsion}}.\]
(By $h_i^\infty$-torsion we mean any $x$ such that $h_i^ex=0$ for some $e\ge 0$). If $A$ is an integral domain and $h_i\not=0$, we can also express this as
\[B_i = A\left[\frac{h_1}{h_i},\ldots, \frac{h_r}{h_i}\right]\subseteq \Frac(A).\]
Now suppose that $Y\subseteq X$ is a closed subscheme, containing the centre of the blowup $D$. Then $\bl_D Y$ is naturally a closed subscheme of $\bl_D X$. In the affine case $X = \spec A$ integral, $Y = \spec A/J$ and $D =\spec A/I$ with $J\subseteq I$, we have that 
\[\bl_D Y \cap \spec B_i = \spec B_i/J_i\]
where $J_i$ is the \emph{strict transform} of $J$ in $B_i$, defined as
\[J_i = \{x\in B_i\mid \exists e\ge 0,\; h_i^e x\in JB_i\}.\]
Lastly, blowing up a regular scheme at a regular centre produces a regular scheme, i.e. if $X$ is regular, and $D\not=X$ is a regular closed subscheme, then $\bl_D X$ is regular.
\subsection{Constructing automorphisms of blowups}
In this subsection, we aim to show that formal $N$-closeness is preserved under blowups, in the following sense:
\begin{thm}\label{blowup_formal_closeness}
	Let $\mathcal{Y}$ be an $R$-scheme embedded inside a regular integral $R$-scheme $\mathcal{X}$, and let $D\subseteq \mathcal{Y}_s$ be a regular closed irreducible subscheme. Then if $N\ge 3$ and $\mathcal{Y}'\subseteq \mathcal{X}$ is formally $N$-close to $\mathcal{Y}$, then the blowups $\bl_D \mathcal{Y}, \bl_D \mathcal{Y}'$ are formally $(N-1)$-close in $\bl_D \mathcal{X}$.
\end{thm}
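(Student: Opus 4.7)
The plan is to lift the data witnessing formal $N$-closeness of $\mathcal{Y},\mathcal{Y}'$ in $\mathcal{X}$ to the blowups. Take the affine cover $\mathcal{X}_\alpha = \spec A_\alpha$, the surjections $p_\alpha:C_\alpha\onto A_\alpha$, and the automorphisms $\sigma_\alpha:\widehat{C_\alpha}\to\widehat{C_\alpha}$ given by the hypothesis. First I would lift the ideal $K_\alpha\subseteq A_\alpha$ cutting out $D\cap\mathcal{X}_\alpha$ to $L_\alpha := p_\alpha^{-1}(K_\alpha)\subseteq C_\alpha$; after refining the cover, I may assume $L_\alpha = (h_1,\ldots,h_s)$ with the $h_j$ a regular sequence. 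Since $D$ lies in the special fibre, $\pi\in L_\alpha$, giving a relation $\pi = \sum_j c_jh_j$ in $C_\alpha$. In each affine chart $B_{\alpha,i} = C_\alpha[h_1/h_i,\ldots,h_s/h_i]$ of $\bl_{L_\alpha}\spec C_\alpha$, setting $T_j = h_j/h_i$ therefore yields $\pi = h_iu_i$ with $u_i = \sum_j c_jT_j$. The divisibility of $\pi$ by $h_i$ in $B_{\alpha,i}$ is the crucial geometric input for the rest of the proof. Moreover, $p_\alpha$ descends to a surjection from $B_{\alpha,i}$ onto the corresponding chart of $\bl_{K_\alpha}\spec A_\alpha$, carrying the strict transforms of $J_\alpha$ and $J'_\alpha$ to the ideals locally defining $\bl_D\mathcal{Y}$ and $\bl_D\mathcal{Y}'$ inside $\bl_D\mathcal{X}$.

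The heart of the argument is to construct, for each pair $(\alpha,i)$, an automorphism $\tilde\sigma_{\alpha,i}:\widehat{B_{\alpha,i}}\to\widehat{B_{\alpha,i}}$ which extends $\sigma_\alpha$ on $\widehat{C_\alpha}$, is congruent to the identity modulo $\pi^{N-1}$, and carries the strict transform of $J_\alpha$ to that of $J'_\alpha$. Respecting the blowup relations forces
\[\sigma_\alpha(h_i)\tilde\sigma_{\alpha,i}(T_j) = \sigma_\alpha(h_j).\]
Writing $\sigma_\alpha(h_k) = h_k + \pi^N\beta_k$ and seeking $\tilde\sigma_{\alpha,i}(T_j) = T_j + \pi^{N-1}\gamma_j$, substitution together with the identity $\pi = h_iu_i$ absorbs one factor of $h_i$ from each side and reduces the problem to
\[(1 + \pi^{N-1}u_i\beta_i)\gamma_j = u_i(\beta_j - \beta_iT_j).\]
For $N\ge 2$ the left-hand coefficient is a unit in $\widehat{B_{\alpha,i}}$ and $\gamma_j$ exists uniquely there; for $N\ge 3$ the resulting $\tilde\sigma_{\alpha,i}$ is congruent to the identity modulo $\pi^2$, so \cite[Lemma 5.0]{Cutkosky_Srinivasan_1993} guarantees that it is in fact an automorphism. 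This is the main technical obstacle, and the single-$\pi$-power loss in the closeness exponent comes precisely from the one factor of $h_i$ absorbed when rescaling the blowup relation.

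Finally I would verify that $\tilde\sigma_{\alpha,i}$ respects strict transforms. Rearranging gives $\sigma_\alpha(h_i) = h_i(1 + \pi^{N-1}u_i\beta_i)$, so $\sigma_\alpha(h_i)$ and $h_i$ differ by a unit in $\widehat{B_{\alpha,i}}$ and hence define the same saturation of any ideal. For $x$ in the strict transform of $J_\alpha$, so that $h_i^ex\in J_\alpha\widehat{B_{\alpha,i}}$ for some $e$, applying $\tilde\sigma_{\alpha,i}$ gives $\sigma_\alpha(h_i)^e\tilde\sigma_{\alpha,i}(x)\in \sigma_\alpha(J_\alpha)\widehat{B_{\alpha,i}} = J'_\alpha\widehat{B_{\alpha,i}}$, and the unit-equivalence places $\tilde\sigma_{\alpha,i}(x)$ in the strict transform of $J'_\alpha$; the reverse inclusion follows by applying the same argument to $\tilde\sigma_{\alpha,i}^{-1}$. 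Assembling the $\tilde\sigma_{\alpha,i}$ over all $(\alpha,i)$ together with the induced surjections onto the blowup charts then supplies the data needed to witness formal $(N-1)$-closeness of $\bl_D\mathcal{Y}$ and $\bl_D\mathcal{Y}'$ in $\bl_D\mathcal{X}$.
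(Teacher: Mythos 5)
Your proposal is correct and takes essentially the same approach as the paper: you construct the automorphism of each blowup chart by forcing the constraint $\sigma_\alpha(h_i)\tilde\sigma_{\alpha,i}(T_j)=\sigma_\alpha(h_j)$, absorb one power of $h_i$ via $\pi=h_iu_i$ to solve for $\gamma_j$ (your formula agrees with the paper's after clearing the denominator $1+\pi^{N-1}u_i\beta_i$), invoke Cutkosky--Srinivasan Lemma~5.0 to get an automorphism for $N\ge 3$, and check strict transforms using the unit relation $\sigma_\alpha(h_i)=h_i\cdot(\text{unit})$. The only small caveat is that your surjection from $B_{\alpha,i}$ onto a chart of $\bl_{K_\alpha}\spec A_\alpha$ exists only when $p_\alpha(h_i)\ne 0$; the remaining charts meet the strict transform of $\spec A_\alpha$ in the empty set and should simply be discarded, which is implicitly what the paper does by fixing a nonzero lift $h'$ of a nonzero generator $h$.
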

\begin{proof}
	We essentially follow the proof of \cite[Theorem 5.1]{Cutkosky_Srinivasan_1993}, again adapted to our setting of working over a discrete valuation ring.
	
	By formal $N$-closeness we have $\mathcal{Y}_s = \mathcal{Y}'_s$, so $D\subseteq \mathcal{Y}'_s$ as well, and the blowups make sense. We also have that $\bl_D \mathcal{X}$ is regular, since $D$ and $\mathcal{X}$ are regular.
	
	By properties of blowups, we may work locally, so we can assume that $\mathcal{X} = \spec A$, with $\mathcal{Y} = \spec A/I$ and $\mathcal{Y}' = \spec A/I'$, and that there is a surjection $p: C \onto A$ with $p^{-1}(I) = J$, $p^{-1}(I') = J'$, and an automorphism $\sigma: \comp{C}\to \comp{C}$ such that $\sigma(\comp{J}) = \comp{J'}$ and $\sigma \equiv \id \mod \pi^N$.
	
	Then the closed set $D$ corresponds to a prime ideal $\mathfrak{p}\subseteq A$ with $\pi\in \mathfrak{p}$. We let $\mathfrak{p} = (h_0, h_1, \ldots, h_r)$ with $h_i\not=0$. The blowup $\bl_D \mathcal{X}$ is covered by affine patches $\spec B_i$, where
	\[B_i = A\left[\frac{h_0}{h_i}, \ldots, \frac{h_r}{h_i}\right].\]
	We have $\bl_D\mathcal{Y},\bl_D\mathcal{Y}'\subseteq \bl_D\mathcal{X}$, and moreover
	\[\bl_D\mathcal{Y}\cap \spec B_i = V(I_i)\]
	where $I_i$ is the strict transform of $I$ in $B_i$.
	Similarly $\bl_D\mathcal{Y}'\cap \spec B_i$ is given by the vanishing of the strict transform $I_i'$.
	
	It suffices to prove that for each $i$, there exists a surjection $q_i:C_i\onto B_i$ with $q_i^{-1}(I_i) = J_i, q_i^{-1}(I_i') = J_i'$, and an automorphism $\sigma_i: \comp{C_i}\to \comp{C_i}$ with $\sigma_i(\comp{J_i}) = \comp{J'_i}$. We prove this for $i=0$, for simplicity we let $h_0 = h$ and $B_0 = B$, so $\mathfrak{p} = (h, h_1, \ldots, h_r)$ and
	\[B =A\left[\frac{h_1}{h}, \ldots, \frac{h_r}{h}\right].\] 
	The strict transforms of $I$ and $I'$ inside $B$ are denoted by $I_0, I_0'$ respectively.
	
	Pick $h', h_i'\in C$ such that $p(h') = h$ and $p(h_i') = h_i$, and moreover let 
	\[\ker p = (h_{r+1}', \ldots, h_t')\]
	for some $t\ge r+1$. Then we have
	\[p^{-1}(\mathfrak{p}) = (h', h_1', \ldots, h_t')\]
	Now we set
	\[C_0 = C\left[\frac{h_1'}{h'}, \ldots, \frac{h_t'}{h'}\right]\]
	Which is the coordinate ring of one of the charts on the blowup $\bl_{p^{-1}(\mathfrak{p})} C$. The strict transform of $\ker p\subseteq C$ in $C_0$ is the ideal
	\[L = \{x\in C_0\,\mid\,\exists e\ge 0, h'^ex\in \ker p\} = \left(\frac{h_{r+1}'}{h'}, \ldots, \frac{h_t'}{h'}\right)\]
	and $C_0/L\cong B$. Let $q: C_0\onto B$ be the corresponding surjection.
	
	Denote by $J_0, J_0'$ the strict transforms of $J, J'$ inside $C_0$. One easily sees that $J_0 = q^{-1}(I_0)$ and $J_0' = q^{-1}(I_0')$ as well (they both cut out the blowup of $A/I = C/J$ along the ideal $\mathfrak{p}$). It remains to construct an automorphism $\sigma_0: \comp{C_0}\to \comp{C_0}$ such that $\sigma(\comp{J_0}) = \comp{J_0'}$ and $\sigma\equiv \id \mod \pi^{N-1}$.
	
	The automorphism $\sigma: \comp{C}\to \comp{C}$ satisfies $\sigma\equiv \id \mod \pi^N$, so there is some $x\in \comp{C}$ such that
	\[\sigma(h') = h' + \pi^Nx\]
	but since $\pi\in \mathfrak{p}$, in $\comp{C_0}$ we can write $\pi = h'\tau$ for some $\tau\in \comp{C_0}$. Then we have
	\[\sigma(h') = h'(1 + y)\]
	for some $y\in \pi^{N-1}\comp{C_0}$. By a similar argument we can write
	\[\sigma(h_i') = h_i' + h'y_i\tag{$\star$}\]
	for some $y_i\in \pi^{N-1}\comp{C_0}$.
	
	Now define $\sigma_0: C_0\to \comp{C_0}$ by
	\begin{align*}
		\sigma_0|_C &= \sigma\\
		\sigma_0\left(\frac{h_i'}{h'}\right) &= \frac{h_i'/h' + y_i}{1 + y}
	\end{align*}
	One easily checks that this is well-defined:
	\begin{itemize}
		\item since $y\in \pi^{N-1}\comp{C_0}$, $1+y$ is a unit (as $N\ge 2$)
		\item we have $\sigma_0(h')\sigma_0(h_i'/h') = \sigma_0(h_i')$ by $(\star)$.
		\item $\comp{C_0}$ has no $\sigma_0(h') = h'(1+y)$-torsion, since it has no $h'$-torsion.
	\end{itemize}
	Moreover, since $\sigma_0(\pi) = \pi$, it extends to a map $\sigma_0: \comp{C_0}\to \comp{C_0}$. It is easily seen from the definition that $\sigma_0\cong \id\mod \pi^{N-1}$. Hence if $N\ge 3$, then this map is also an automorphism, by \cite[Lemma 5.0]{Cutkosky_Srinivasan_1993}.
	
	It remains to check that $\sigma_0(\comp{J_0}) = \comp{J_0'}$. Pick $x\in J_0$, then there is some $e\ge 0$ such that
	\[h'^ex \in J\]
	Then we know that $\sigma(h'^ex)\in \comp{J'}$, which is the $\pi$-adic completion of $J'$. So for all $n\ge 0$, there exists some $z\in J'$ such that
	\[\sigma(h'^ex) -z \in \pi^n\comp{C}\]
	We can also write, passing to $\comp{C_0}$,
	\[\sigma(h'^e x) = \sigma_0(h'^e x) = \sigma_0(h')^e\sigma_0(x) = h'^e(1+y)^e\sigma_0(x)\]
	So we have that
	\[h'^e(1+y)^e\sigma_0(x)-z\in \pi^N\comp{C_0}\]
	Meaning that there is an $a\in \comp{C}$ such that
	\[h'^e(1+y)^e\sigma_0(x) + \pi^N a = z\]
	As before, we can write $\pi = h'\tau$ in $\comp{C_0}$, so that if $n\ge e$ we can write
	\[h'^e((1+y)^e\sigma_0(x) + \pi^{n-e}\tau^e a) = z \in J'\]
	Which means by the definition of strict transform that
	\[(1+y)^e\sigma_0(x) + \pi^{N-e}\tau^ea \in J_0'\]
	Since such a relation holds for all $n\ge e$, we can conclude that $(1+y)^e\sigma_0(x)$ is contained in the $\pi$-adic closure of $J_0'$, i.e.
	\[(1+y)^e\sigma_0(x)\in \comp{J_0'}\]
	and since $1+y$ is a unit, we get $\sigma_0(x)\in \comp{J_0'}$ too.
	
	Hence we have shown that $\sigma_0(J_0) \subseteq \comp{J_0'}$. By $\pi$-adic continuity, we also get $\sigma_0(\comp{J_0})\subseteq \comp{J_0'}$. A similar argument with $\sigma_0^{-1}$ in place of $\sigma_0$ shows the opposite inclusion, so we get that $\sigma_0(\comp{J_0}) = \comp{J_0'}$ as claimed.	
\end{proof}
\section{Reduction types of curves in families}\label{section_results}

In this section, we apply our results from previous sections to models of curves.
\subsection{Local constancy of reduction type}
Let $C$ be a smooth projective curve over $K$, with a model $\mathcal{C}$, and suppose $\mathcal{C}$ is embedded in a regular integral $R$-scheme $\mathcal{X}$ as a local complete intersection, i.e. it's locally given by $r = \codim(\mathcal{C}, \mathcal{X})$ equations. One can find a regular model of $C$ by a sequence of blowups as follows:
\begin{thm}\label{resolution}
	Let $\mathcal{C}$ be a model of a smooth projective curve $C/K$. Assume $R$ is excellent. Then there is a sequence of morphisms
	\[\mathcal{C}_{reg} =\mathcal{C}_n \to \ldots \to \mathcal{C}_1\to \mathcal{C}_0 = \mathcal{C} \]
	such that $\mathcal{C}_{reg}$ is regular, and for each $i$, the map $\mathcal{C}_{i+1}\to \mathcal{C}_i$ is the blowup of $\mathcal{C}_i$ in some irreducible regular centre $D_i\subseteq X_i$, such that $D_i$ is contained in the special fibre of $\mathcal{C}_i$.
\end{thm}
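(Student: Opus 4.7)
The plan is to invoke the classical resolution of singularities theorem for excellent two-dimensional Noetherian schemes (due to Lipman, see e.g.\ \cite[Theorem 8.3.50]{liu2002algebraic}), and then verify that the blowup centres it produces can be arranged to lie in the special fibre.

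To set up the hypotheses: since $R$ is excellent and $\mathcal{C}$ is of finite type over $R$, the scheme $\mathcal{C}$ is itself excellent. It is an integral, Noetherian $R$-scheme of dimension $2$, and its generic fibre is $C$, which is smooth over $K$ and hence regular. In particular the non-regular locus of $\mathcal{C}$ is a proper closed subset disjoint from the generic fibre, so it is contained in the special fibre $\mathcal{C}_s$. Applying Lipman's theorem produces a finite sequence of blowups $\mathcal{C}_n\to\ldots\to\mathcal{C}_0=\mathcal{C}$ with $\mathcal{C}_n$ regular, where each $\mathcal{C}_{i+1}\to\mathcal{C}_i$ is the blowup along a reduced closed point $D_i\in\mathcal{C}_i$. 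Any such $D_i$ is automatically irreducible and regular, being the spectrum of its residue field.

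It remains to verify by induction on $i$ that each $D_i$ lies in the special fibre $(\mathcal{C}_i)_s$; the base case $i=0$ is just the observation above, and the inductive step uses that blowing up $\mathcal{C}_i$ along a closed point of $(\mathcal{C}_i)_s$ is an isomorphism on the generic fibre, so that $\mathcal{C}_{i+1}$ continues to have generic fibre $C$ and its non-regular locus is therefore still contained in the special fibre. The main obstacle in this proof is the resolution theorem itself, since this is where the excellence hypothesis on $R$ is used essentially; once it is invoked, the placement of the centres in the special fibre follows by a routine induction using preservation of the generic fibre through each blowup.
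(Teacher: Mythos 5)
The paper's proof is a one-line citation of Cossart--Jannsen--Saito's canonical resolution theorem (\cite[Theorem 0.1]{cossart2013canonical}), which produces, for an excellent reduced two-dimensional scheme, a finite sequence of blowups in \emph{permissible} (in particular regular, irreducible) centres contained in the singular locus, terminating in a regular scheme. Your argument that the centres must lie in the special fibre --- since the generic fibre is smooth and blowups centred there do not change the generic fibre --- is exactly the right observation, and the paper leaves it implicit.

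However, there is a genuine gap in your substitution of Lipman's theorem for CJS. Lipman's desingularisation of excellent two-dimensional schemes (the result in \cite[§8.3.4]{liu2002algebraic}) is proved by \emph{alternating normalisation and blowup at closed points}: you normalise, blow up the (finitely many) singular points, normalise again, and so on, with termination being the hard part. The resulting proper birational morphism $\mathcal{C}_{reg}\to\mathcal{C}$ is therefore \emph{not} a composition of blowups in regular centres --- the normalisation steps are finite birational morphisms that in general cannot be realised as blowups along regular subschemes. Your claim that ``Applying Lipman's theorem produces a finite sequence of blowups $\ldots$ where each $\mathcal{C}_{i+1}\to\mathcal{C}_i$ is the blowup along a reduced closed point'' misstates the output of Lipman's algorithm. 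Since the theorem being proved explicitly requires each step to be a blowup at an irreducible regular centre (a point or a regular curve in the special fibre), Lipman's theorem does not deliver the conclusion. The stronger CJS result is used in the paper precisely because it guarantees resolution by permissible blowups; that is the essential input, and it cannot be replaced by Lipman's theorem without additional argument (e.g., further work to factor normalisations, which is not routine).
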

\begin{proof}
	This follows from \cite[Theorem 0.1]{cossart2013canonical}.
\end{proof}
So take such a sequence of blowups in centres $D_i$. Also inductively put $\mathcal{X}_{i+1} = \bl_{D_i}\mathcal{X}_i$ so that $\mathcal{C}_i \subseteq \mathcal{X}_i$. 

\begin{thm}\label{close_models_regular}
	There is an integer $a\ge 1$ such that if $N\ge a$, and $\mathcal{C}'\subseteq \mathcal{X}$ is a model a smooth projective curve $C'$ that is $N$-close to $\mathcal{C}$, then one can perform a sequence of blowups
	\[\mathcal{C}'_n \to \ldots \to \mathcal{C}'_1\to \mathcal{C}'_0 = \mathcal{C}' \]
	in centres $D_i$, and $\mathcal{C}'_n$ is regular and formally $(N-a)$-close to $\mathcal{C}_n$.
\end{thm}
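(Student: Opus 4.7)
The plan is to iterate Theorem \ref{blowup_formal_closeness} along the resolution sequence of $\mathcal{C}$ supplied by Theorem \ref{resolution}, after first upgrading the initial $N$-closeness to formal closeness by Theorem \ref{closeness_to_formal_closeness}, and to close by invoking Proposition \ref{closeness_regularity} to transfer regularity from $\mathcal{C}_n$ to the terminal scheme on the perturbed side.

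Concretely, I would first apply Theorem \ref{closeness_to_formal_closeness} to produce an integer $d$, depending only on $\mathcal{C}$ and $\mathcal{X}$ (and the fixed affine cover realising $N$-closeness), such that for $N\ge d$, $\mathcal{C}$ and $\mathcal{C}'$ are formally $(N-d)$-close inside $\mathcal{X}$. Set $\mathcal{X}_0 = \mathcal{X}$, $\mathcal{C}'_0 = \mathcal{C}'$, and inductively define $\mathcal{X}_{i+1} = \bl_{D_i}\mathcal{X}_i$ and $\mathcal{C}'_{i+1} = \bl_{D_i}\mathcal{C}'_i$, using the very same centres $D_i$ used to resolve $\mathcal{C}$. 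These blowups are well posed: each ambient $\mathcal{X}_i$ stays regular (blowup of a regular scheme at a regular centre is regular), and formal $1$-closeness gives $(\mathcal{C}_i)_s = (\mathcal{C}'_i)_s$ by Proposition \ref{formal_closeness_properties}, so the regular irreducible centre $D_i \subseteq (\mathcal{C}_i)_s$ is automatically a regular closed subscheme of $(\mathcal{C}'_i)_s$.

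Now an induction on $i$ using Theorem \ref{blowup_formal_closeness}: if $\mathcal{C}_i$ and $\mathcal{C}'_i$ are formally $M_i$-close with $M_i\ge 3$, then $\mathcal{C}_{i+1}$ and $\mathcal{C}'_{i+1}$ are formally $(M_i-1)$-close inside $\mathcal{X}_{i+1}$. Starting from $M_0 = N-d$ and iterating $n$ times yields $M_n = N - d - n$, with the hypothesis $M_i \ge 3$ at every stage secured by demanding $N \ge d + n + 2$. The generic fibre of $\mathcal{C}'_n$ coincides with $C'$, hence is smooth, because all $D_i$ lie in the special fibres and blowups are isomorphisms away from their centres. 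Since $\mathcal{C}_n$ is regular by Theorem \ref{resolution} and $M_n \ge 2$, Proposition \ref{closeness_regularity} then transfers regularity to every special-fibre point of $\mathcal{C}'_n$, so $\mathcal{C}'_n$ is regular. Taking $a := d + n + 2$ gives the required constant, since then $N \ge a$ both ensures the inductive step goes through and yields formal $(N-d-n)$-closeness, which is at least formal $(N-a)$-closeness.

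There is no genuine obstacle beyond careful bookkeeping: each blowup costs one unit of formal closeness, and we must be left with at least formal $2$-closeness at the end so that Proposition \ref{closeness_regularity} applies. Since the resolution length $n$ depends only on $\mathcal{C}$ and the constant $d$ only on $\mathcal{C}$, $\mathcal{X}$ and the chosen cover, both are fixed by the initial data and can be absorbed into the single constant $a$.
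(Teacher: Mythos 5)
Your proposal is correct and follows essentially the same approach as the paper's proof: upgrade $N$-closeness to formal $(N-d)$-closeness via Theorem \ref{closeness_to_formal_closeness}, iterate Theorem \ref{blowup_formal_closeness} along the fixed resolution sequence to lose one unit of formal closeness per blowup, and finish with Proposition \ref{closeness_regularity}, taking $a = d+n+2$. Your bookkeeping (requiring formal $3$-closeness at each blowup step and formal $2$-closeness at the end) matches the paper exactly.
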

\begin{proof}
	By Theorem \ref{closeness_to_formal_closeness}, there is a $d\ge 1$ such that if $N\ge d$, then $\mathcal{C}$ and $\mathcal{C}'$ are formally $(N-d)$-close. We let $a = d + n + 2$, and claim this works.
	
	If $N\ge a$, then by Theorem \ref{closeness_to_formal_closeness}, $\mathcal{C}$ and $\mathcal{C}'$ are formally $(N-d)$-close. Thus they have the same special fibre, and $D_0\subseteq \mathcal{C}'_s$, so it makes sense to blow up $\mathcal{C}'$ in this centre. Now applying Theorem \ref{blowup_formal_closeness}, we get that $\mathcal{C}_1$ and $\mathcal{C}'_1$ are formally $(N-d-1)$-close. Iterating this argument, we get that for each $i$, $\mathcal{C}_i$ and $\mathcal{C}'_i$ are formally $(N-d-i)$-close.
	
	Hence $\mathcal{C}_n$ and $\mathcal{C}'_n$ are formally $(N-d-n)$-close. Now since $N-d-n\ge 2$ and $\mathcal{C}_n$ is regular, we have by Proposition \ref{closeness_regularity} that $\mathcal{C}'_n$ is regular at all points on the special fibre. Since its generic fibre is regular (by assumption it's a model of a smooth curve over $K$), we obtain that $\mathcal{C}'_n$ is regular. We also know that it's formally $(N-a)$-close to $\mathcal{C}_n$ as claimed.
\end{proof}
We can apply this to get our main result:
\begin{thm}\label{main_result}
	Let $R$ be an excellent discrete valuation ring, and $K$ its field of fractions. Let $C/K$ be a smooth projective curve, and $\mathcal{C}$ a given model of it, presented as a local complete intersection inside a regular integral $R$-scheme $\mathcal{X}$. Then there is an integer $a\ge 1$ such that if $N\ge a$ and $C'$ is a smooth projective curve $N$-close to $C$, then $C$ and $C'$ have regular models with the same special fibre.
\end{thm}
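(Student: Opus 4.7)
The plan is to combine the resolution-of-singularities result (Theorem \ref{resolution}) with the fact that this resolution process is compatible with formal closeness (Theorem \ref{close_models_regular}), and then extract the common special fibre from formal $1$-closeness via Proposition \ref{formal_closeness_properties}(2).

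First I would invoke Theorem \ref{resolution} to produce a sequence of blowups
\[\mathcal{C}_n \to \mathcal{C}_{n-1} \to \ldots \to \mathcal{C}_0 = \mathcal{C}\]
with each centre $D_i$ a regular irreducible subscheme of the special fibre of $\mathcal{C}_i$, and with $\mathcal{C}_n$ regular. Setting $\mathcal{X}_{i+1} = \bl_{D_i}\mathcal{X}_i$, I obtain a corresponding tower of ambient schemes, each regular since each centre is regular. Theorem \ref{close_models_regular}, applied to this fixed tower, then supplies an integer $a \geq 1$ — depending on $\mathcal{C}$, $\mathcal{X}$, and the chosen sequence of centres — such that whenever $N \geq a$ and $\mathcal{C}' \subseteq \mathcal{X}$ is a model of a smooth projective curve $C'$ that is $N$-close to $\mathcal{C}$, the same sequence of centres can be blown up on $\mathcal{C}'$ to yield a regular model $\mathcal{C}'_n$ of $C'$, and moreover $\mathcal{C}_n$ and $\mathcal{C}'_n$ are formally $(N-a)$-close inside $\mathcal{X}_n$.

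Finally, I would enlarge the bound by one, taking $a' = a+1$: for $N \geq a'$ we have $N - a \geq 1$, so $\mathcal{C}_n$ and $\mathcal{C}'_n$ are at least formally $1$-close, and Proposition \ref{formal_closeness_properties}(2) then identifies their special fibres as the same closed subscheme of $(\mathcal{X}_n)_s$. This produces the required pair of regular models with coinciding special fibres. No significant new obstacle remains at this stage: all the genuine difficulty — the Hensel's-lemma construction of local formal automorphisms in Proposition \ref{CS}, and its propagation through blowups in Theorem \ref{blowup_formal_closeness} — has already been absorbed into the prior results, and what remains is purely the assembly of these inputs together with the observation that one extra unit of formal closeness buys the identification of special fibres.
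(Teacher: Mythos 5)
Your proof is correct and follows essentially the same route as the paper: assemble Theorem \ref{resolution}, Theorem \ref{close_models_regular}, and Proposition \ref{formal_closeness_properties}(2). Your explicit adjustment $a' = a+1$, ensuring $N-a \ge 1$ so that formal $1$-closeness (rather than the undefined formal $0$-closeness) is actually obtained, is a minor precision that the paper leaves implicit under the phrase ``for $N$ sufficiently large.''
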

\begin{proof}
	By Theorem \ref{close_models_regular}, $C$ and $C'$ have regular models which are formally 1-close for $N$ sufficiently large. These models have the same special fibre if $N\ge 1$, by Proposition \ref{formal_closeness_properties}(2).
\end{proof}
\begin{remark}
	Note that as in Theorem \ref{closeness_to_formal_closeness}, the value of $a$ in this theorem depends not only on the curve $C$, but also on the open cover realising the $N$-closeness to $C'$.
\end{remark}
\subsection{Tamagawa number, index, deficiency}
We now show the local constancy of various invariants that can be read off of the special fibre. The first invariant is the Tamagawa number, which is one of the local invariants appearing in the Birch and Swinnerton-Dyer conjecture.
\begin{defn}
	Let $C$ be a smooth projective curve over $K$, and let $J$ be its Jacobian. The Néron model $\mathcal{J}$ of $J$ decomposes as
	\[0\to \mathcal{J}^0\to \mathcal{J}\to \varphi_\mathcal{J}\to 0\]
	where $\mathcal{J}^0$ is the identity component, and $\varphi_{\mathcal{J}}$ is the component group. The \emph{Tamagawa number} of $C$ is $c(C) = |\varphi_\mathcal{J}(k)|$, the number of $k$-points of the component group.
\end{defn}
We also look at the index and the deficiency. When our curve arises via base change from a number field, the deficiency controls the size of the 2-primary part of its Tate-Shafarevich group modulo squares, see \cite[Corollary 12]{poonen-stoll}. See also the exposition in \cite[§5.1]{parityranksjacobianscurves}.
\begin{defn}
	Let $C$ be a smooth projective curve of genus $g$ over $K$. Its \emph{index} $I(C)$, is the smallest positive degree of a $K$-rational divisor on $C$. If $C$ is geometrically connected, we say that $C$ is \emph{deficient} if it has no $K$-rational divisor of degree $g-1$, i.e. if $I(C)$ does not divide $g-1$.
\end{defn}

We will say that a curve $C$ over $K$ is \emph{locally soluble} if it has a $K$-rational point.

Our result on the local constancy of reduction type yields the following:
\begin{thm}\label{tamagawa_index_local_constancy}
	Let $C$ and $C'$ be smooth projective curves which are $N$-close. Then, for sufficiently large $N$, the following hold:
	\begin{enumerate}
		\item if $R$ is Henselian, $C$ is locally soluble if and only if $C'$ is locally soluble.
		\item $C$ and $C'$ have the same Néron component group, and hence the same Tamagawa number.
		\item If $k$ is perfect, then $C$ and $C'$ have the same index. If both $C$ and $C'$ are geometrically connected, then $C$ is deficient if and only if $C'$ is deficient.
	\end{enumerate}
\end{thm}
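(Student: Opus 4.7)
The plan is to invoke Theorem~\ref{main_result} to fix, for $N$ sufficiently large, regular models $\mathcal{C}, \mathcal{C}'$ of $C, C'$ whose special fibres agree as $k$-schemes. Going through the proof of that theorem via Theorem~\ref{close_models_regular}, these models may moreover be taken to be formally $M$-close for $M$ as large as we wish, which I will use when the bare identification of special fibres does not obviously suffice. Each of the three invariants is then read off from this shared structure.

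For (1), properness of the regular model gives $C(K) = \mathcal{C}(R)$, and since $R$ is Henselian, Hensel's lemma identifies the image of the reduction map $\mathcal{C}(R) \to \mathcal{C}_s(k)$ with the set of closed points at which $\mathcal{C}/R$ is smooth. The less immediate direction is that at a non-smooth closed point no $R$-section can reduce: at a node locally modelled by $\spec R[u,v]/(uv-\pi)$, a section $u\mapsto a$, $v\mapsto b$ with $a,b\in\pi R$ forces $\pi = ab \in \pi^2 R$, a contradiction; the general case follows by the same argument applied to a presentation $\widehat{\mathcal{O}_{\mathcal{C},x}}\cong R[[u,v]]/(f)$ with $f\equiv g\pmod{\pi}$ and $g\in(u,v)^2$. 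By flatness, smoothness of $\mathcal{C}/R$ at $x\in \mathcal{C}_s$ coincides with smoothness of $\mathcal{C}_s/k$ there, so local solubility depends only on the smooth locus of $\mathcal{C}_s$, which is common to $C$ and $C'$.

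For (2), I would appeal to Raynaud's description of the component group of the N\'eron model of $\Jac(C)$ (see \cite[Ch.\ 9]{liu2002algebraic}): $\varphi_\mathcal{J}$ is the cokernel of an intersection pairing on the free abelian group on the irreducible components of $\mathcal{C}_s$, with entries built from the components' multiplicities, residue-field degrees and intersection multiplicities on the arithmetic surface. The first three of these are visibly invariants of $\mathcal{C}_s$ as a $k$-scheme. The main obstacle, and what I expect to be the delicate point, is showing that the intersection multiplicities on $\mathcal{C}$ and $\mathcal{C}'$ also agree, since these a priori involve the ambient surface rather than just its special fibre. Here the formal closeness from Theorem~\ref{close_models_regular} is crucial: it produces $R$-algebra isomorphisms between the formal completions of $\mathcal{C}$ and $\mathcal{C}'$ at every closed point of the shared special fibre, and intersection multiplicities are determined by these completions. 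Hence $\varphi_\mathcal{J} \cong \varphi_{\mathcal{J}'}$ as $\mathrm{Gal}(\bar{k}/k)$-modules, so $c(C) = c(C')$.

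For (3), the hypothesis that $k$ is perfect makes available the formula $I(C) = \gcd_i m_i[k(\Gamma_i):k]$, the gcd taken over the irreducible components $\Gamma_i$ of $\mathcal{C}_s$ of multiplicity $m_i$, where $k(\Gamma_i)$ denotes the algebraic closure of $k$ in the function field of $\Gamma_i$ (cf.\ \cite[Ch.\ 9]{liu2002algebraic}). These quantities depend only on $\mathcal{C}_s$, so $I(C) = I(C')$; and since $C$ and $C'$ have the same genus, deficiency is preserved.
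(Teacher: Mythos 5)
Your overall strategy is the same as the paper's: invoke Theorem~\ref{main_result} to replace $\mathcal{C},\mathcal{C}'$ by regular models with a common special fibre, then argue that each invariant is a function of that special fibre. Parts (1) and (3) match the paper essentially line for line (the paper simply cites \cite[Corollary 4.4(b)]{silverman_advanced} for the fact that $R$-sections of a regular model land in the smooth locus rather than re-proving it, and cites \cite[Corollary 1.3]{Bosch_1999} and \cite[Th\'eor\`eme 3.1]{MR1385140} for the index formula rather than Liu's book, but these are the same facts).

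For (2) you take a genuinely different and slightly heavier route. The paper cites \cite[Theorem 1.1]{Bosch_1999}, which shows directly that the component group is determined by the special fibre of a regular model. You instead pass through the intersection-matrix description of $\varphi_{\mathcal{J}}$ and worry that the intersection numbers $(\Gamma_i\cdot\Gamma_j)_{\mathcal{C}}$ depend on the arithmetic surface and not just on $\mathcal{C}_s$; you then patch this with the formal closeness of Theorem~\ref{close_models_regular}, inducing isomorphisms of completed local rings at closed points of the special fibre, from which the intersection multiplicities are read off. This is a valid argument, but the extra input is not actually necessary: for distinct components $\Gamma_i\neq\Gamma_j$ through a closed point $x$, one has $\pi\in\mathfrak{p}_i$ locally, so
\[
\mathcal{O}_{\mathcal{C},x}/(\mathfrak{p}_i+\mathfrak{p}_j)=\mathcal{O}_{\mathcal{C},x}/(\pi,\mathfrak{p}_i,\mathfrak{p}_j)=\mathcal{O}_{\mathcal{C}_s,x}/(\bar{\mathfrak{p}}_i+\bar{\mathfrak{p}}_j),
\]
which is intrinsic to the $k$-scheme $\mathcal{C}_s$; and the self-intersections are then pinned down by $\sum_j d_j(\Gamma_j\cdot\Gamma_i)=\mathcal{C}_s\cdot\Gamma_i=0$. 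So the full intersection pairing \emph{is} determined by $\mathcal{C}_s$, which is exactly what the Bosch--Liu reference packages up. Your formal-closeness detour buys nothing here, though it would become relevant if one wanted to match invariants that genuinely see the surface beyond its special fibre.
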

\begin{proof}
	In all cases, it suffices to take $N$ sufficiently large so that $C$ and $C'$ have regular models with the same special fibre, which is possible by Theorem \ref{main_result}.
	
	For local solubility, note that if $C$ has a regular model $\mathcal{C}$, then $C(K) = \mathcal{C}(R)$ by properness of $\mathcal{C}$. Because $\mathcal{C}$ is regular, each section lands in the smooth locus $\mathcal{C}^0$ of the map $\mathcal{C}\to \spec R$, see \cite[Corollary 4.4(b)]{silverman_advanced}, so $\mathcal{C}(R) = \mathcal{C}^0(R)$. Now since $R$ is Henselian, the reduction map 
	\[\mathcal{C}(R) = \mathcal{C}^0(R)\onto \mathcal{C}_s(k)\]
	is surjective, \cite[Corollary 6.2.13]{liu2002algebraic}. Hence $C(K)$ is non-empty if and only if $\mathcal{C}_s(k)$ is non-empty, i.e. local solubility can be read off of the special fibre of a regular model.
	
	For the Néron component group and Tamagawa number, this can also be read off of the special fibre of a regular model, see \cite[Theorem 1.1]{Bosch_1999}.
	
	The same is true of the index: let $\mathcal{C}$ is a regular model of $C$, and $\mathcal{C}_s$ has components $\Gamma_1, \ldots, \Gamma_n$, with multiplicities $d_1, \ldots, d_n$. Also set $e_i = [\overline{k}\cap K(\Gamma_i): k]$, and let $I' = \mathrm{gcd}_i \{d_ie_i\}$. Then \cite[Corollary 1.3]{Bosch_1999} implies that $I' \mid I(C)$, and \cite[Théorème 3.1]{MR1385140} implies that $I(C) \mid I'$. (Note that in \cite{Bosch_1999} the authors use the quantity $r_i = [k^{s}\cap K(\Gamma_i): k]$ where $k^s$ is a separable closure, in place of our $e_i$. The $k$ perfect assumption is necessary to ensure that these are equal.)
	Hence $I(C) = I'$ depends only on $\mathcal{C}_s$. Since deficiency depends on whether the index divides $g-1$ where $g$ is the genus, we also get local constancy of the deficiency.
	
\end{proof}

\section{The BSD fudge factor}\label{section_fudge_factor}
The BSD fudge factor is a local factor appearing in the Birch and Swinnerton-Dyer conjecture. Let $X$ be a smooth projective curve of genus $g$ over $\mathbb{Q}$, and $J = \Jac(X)$ is its Jacobian. The Birch and Swinnerton-Dyer conjecture asserts that the $L$-function $L(J, s)$ has an analytic continuation to $\mathbb{C}$, its order of vanishing at $s=1$ is the Mordell-Weil rank of $J$, and its leading coefficient is given by
\[\frac{|\Sha_J|\cdot \textrm{Reg}_J}{|J(\mathbb{Q})|\cdot |J^{\vee}(\mathbb{Q})|}\cdot \Omega_J \cdot \prod_p c_p\left|\frac{\omega}{\omega^0}\right|_p \]
where $\Omega_J$ is the real period, $c_p$ is the Tamagawa number at the prime $p$, and $\left|\frac{\omega}{\omega^0}\right|_p$ is the \emph{fudge factor} at $p$. 

Both the real period and the fudge factor may be expressed in terms of differentials on the curve and its regular models. Firstly we have the isomorphism between global sections of differentials on $X$ and $J$:
\[\Omega^1_{X/\mathbb{Q}}(X)\cong \Omega^1_{J/\mathbb{Q}}(J)\]
(see \cite[Proposition 2.2]{Milne1986}). Choosing a basis of differentials $\omega_1, \ldots, \omega_g$ on $X$, i.e. a $\mathbb{Q}$-basis of the left-hand side above, we can express the real period as
\[\Omega_J = \left|\int_{J(\mathbb{R})}\omega_1\wedge \ldots \wedge \omega_g\right|.\]
Now choose a prime $p$. Let $\mathcal{X}$ be a regular model of $X$ over $\mathbb{Z}_p$, and $\mathcal{J}$ the Néron model of $J$ over $\mathbb{Z}_p$. Then we have an isomorphism between differentials on $\mathcal{J}$ and global sections of the canonical sheaf $\omega_{\mathcal{X}/\mathbb{Z}_p}$:
\[\omega_{\mathcal{X}/\mathbb{Z}_p}(\mathcal{X}) \cong \Omega^1_{\mathcal{J}/\mathbb{Z}_p}(\mathcal{J})\]
which are moreover compatible with the natural identifications $\omega_{\mathcal{X}/\mathbb{Z}_p}(\mathcal{X})\otimes_{\mathbb{Z}_p} \mathbb{Q}_p \cong \Omega^1_{X/\mathbb{Q}_p}(X)$ and $\Omega^1_{\mathcal{J}/\mathbb{Z}_p}(\mathcal{J})\otimes_{\mathbb{Z}_p} \mathbb{Q}_p \cong \Omega^1_{J/\mathbb{Q}_p}(J)$, see \cite[Lemma 9]{vanBommel02012022}.

Now $\omega_1, \ldots, \omega_g$ is a $\mathbb{Q}_p$-basis of $\Omega^1_{X/\mathbb{Q}_p}(X)$. Moreover $\omega_{\mathcal{X}/\mathbb{Z}_p}(\mathcal{X})\subseteq \Omega^1_{X/\mathbb{Q}_p}(X)$ is a $\mathbb{Z}_p$-lattice of full rank. If we let $\omega^0_1, \ldots, \omega^0_g$ be a $\mathbb{Z}_p$-basis of $\omega_{\mathcal{X}/\mathbb{Z}_p}(\mathcal{X})$, then there is a unique $\lambda \in \mathbb{Q}_p$ such that
\[\lambda\cdot \omega^0_1\wedge \ldots \wedge \omega^0_g = \omega_1\wedge \ldots \wedge \omega_g\]
inside $\Omega^g_{J/\mathbb{Q}_p}(J)$. The $p$-adic valuation of $\lambda$ doesn't depend on the basis $\omega^0_i$ chosen, and the fudge factor at $p$ is
\[\left|\frac{\omega}{\omega^0}\right|_p = |\lambda|_p.\]
This measures how far the wedge product of our chosen basis of differentials $\omega_i$ is from being a Néron differential, i.e. one that is regular along the special fibre of the Néron model of $J$ at $p$.

We record this as a definition in more generality, working over $R$. Choose and fix an absolute value $|\cdot|:K \to \mathbb{R}_{\ge 0}$ compatible with the valuation.
\begin{defn}\label{defn_fudge_factor}
	Let $C$ be a smooth projective curve over $K$ of genus $g$, and let $\omega_1, \ldots, \omega_g$ be a $K$-basis of $\Omega^1_{C/K}(C)$. Let $\mathcal{C}$ be a regular model of $C$, and let $\omega^0_1, \ldots, \omega^0_g$ be an $R$-basis of the $R$-lattice $\omega_{\mathcal{C}/R}(\mathcal{C})$. There is a unique $\lambda\in K$ such that 
	\[\lambda\cdot \omega^0_1\wedge \ldots \wedge \omega^0_g = \omega_1\wedge \ldots \wedge \omega_g\]
	inside $\bigwedge^g\Omega^1_{C/K}(C)$. The \emph{BSD fudge factor} of $C$ computed with respect to $\omega_1, \ldots, \omega_g$ is the absolute value
	\[\left|\frac{\omega}{\omega^0}\right| = |\lambda|.\]
\end{defn}
\begin{remark}
	As before, this is well-defined because the absolute value of $\lambda$ doesn't depend on the choice of basis $\omega^0_i$ chosen. It also doesn't depend on our choice of regular model, as the image of the map $\omega_{\mathcal{C}/R}(\mathcal{C})\to \Omega^1_{C/K}(C)$ is independent of the regular model $\mathcal{C}$.
\end{remark}
Our goal in this section will be to prove a local constancy result for the BSD fudge factor.
\subsection{The canonical sheaf}
Let $\mathcal{C}$ be a regular model of the smooth projective curve $C$. We collect the relevant facts about the canonical sheaf, drawing on §6.3-6.4 of \cite{liu2002algebraic}.
\begin{defn}
	The \emph{canonical sheaf} of $\mathcal{C}$ over $R$, denoted $\omega_{\mathcal{C}/R}$, is an invertible sheaf on $\mathcal{C}$ defined as follows: suppose $i: \mathcal{C}\hookrightarrow\mathcal{X}$ is a regular immersion into a smooth $R$-scheme $\mathcal{X}$, then
	\[\omega_{\mathcal{C}/R} = \det(\mathcal{N}_{\mathcal{C}/\mathcal{X}}) \otimes_{\mathcal{O}_\mathcal{C}} i^*(\det (\Omega^1_{\mathcal{X}/R})).\]
	Here $\det$ denotes the top exterior power of a locally free sheaf, and $\mathcal{N}_{\mathcal{C}/\mathcal{X}}$ is the normal sheaf, i.e. if $\mathcal{I}$ is the sheaf of ideals defining $\mathcal{C}\subseteq \mathcal{X}$, then $\mathcal{N}_{\mathcal{C}/\mathcal{X}} = i^*(\mathcal{I}/\mathcal{I}^2)^\vee$.
\end{defn}
Note that such an embedding $i$ always exists for $\mathcal{C}$ a regular model, because $\mathcal{C}\to \spec R$ is a local complete intersection morphism, and in fact we may take $\mathcal{X} = \mathbb{A}^n_R$ \cite[Example 6.3.18]{liu2002algebraic}. The definition of $\omega_{\mathcal{C}/R}$ is independent of the choice of $i$ by \cite[Lemma 6.4.5]{liu2002algebraic}.

We can express the canonical sheaf locally as follows: take an affine open set $U = \spec A\subseteq \mathcal{C}$, and suppose $A = R[T_1, \ldots, T_n]/I$, where $I$ is an ideal generated by a regular sequence $F_1, \ldots, F_{n-1}$. This corresponds to a regular immersion $i: U\hookrightarrow \mathbb{A}^n_R$. The normal sheaf is given by
\[\mathcal{N}_{U/\mathbb{A}^n} = (I/I^2)^\vee\]
where $I/I^2$ is free of rank $n-1$ over $A$, with basis $\overline{F}_1, \ldots, \overline{F}_{n-1}$, the images of the $F_i$. Moreover $\Omega^1_{\mathbb{A}^n/R}$ is free of rank $n$, generated by $dT_1, \ldots, dT_n$. So we have that $\omega_{\mathcal{C}/R}|_U = \omega_{U/R} = \widetilde{\omega}_{A/R}$, where $\omega_{A/R}$ is the $A$-module
\[\omega_{A/R} = \det(I/I^2)^\vee \otimes_A (\Omega^n_{\mathbb{A}^n/R}\otimes_{R[T_1, \ldots, T_n]} A)\]
which is free of rank 1, with basis $(\overline{F}_1\wedge \ldots \wedge \overline{F}_{n-1})^\vee \otimes ((dT_1 \wedge \ldots \wedge dT_n)\otimes 1)$.

We have a natural map
\[c: \Omega^1_{\mathcal{C}/R} \to \omega_{\mathcal{C}/R}\]
which is an isomorphism on the smooth locus of $\mathcal{C}\to \spec R$, in particular on the generic fibre \cite[Corollary 6.4.13]{liu2002algebraic}. Locally it is given as follows: if $U = \spec A\subseteq \mathcal{C}$ is an affine open set as above, then this corresponds to a map
\begin{align*}
	\Omega^1_{A/R}&\to \omega_{A/R}\\
	dt_j &\mapsto (\overline{F}_1\wedge \ldots \wedge \overline{F}_{n-1})^\vee \otimes ((dF_1\wedge \ldots \wedge dF_{n-1}\wedge dT_j)\otimes 1)
\end{align*}
where $t_j$ is the image of $T_j$ in $A$.

Taking sections over the generic fibre, $c$ induces an isomorphism
\[\omega_{\mathcal{C}/R}(\mathcal{C})\otimes_R K =\omega_{\mathcal{C}/R}(C) \cong \Omega^1_{C/K}(C)\]
which means that $\omega_{\mathcal{C}/R}(\mathcal{C})$ is an $R$-lattice of full rank in the $K$-vector space $\Omega^1_{C/K}(C)$. We will call this the lattice of \emph{regular differentials} inside $\omega^1_{K/C}(C)$. 

We can characterize the set of regular differentials as the set of differentials which are regular along each component of the special fibre. We expand this in more detail: if $\Gamma\subseteq \mathcal{C}_s$ is a component of the special fibre, then the generic point $\xi$ of $\Gamma$ is a codimension 1 point of $\mathcal{C}$, hence $\mathcal{O}_{\mathcal{C}, \xi}$ is a DVR (as $\mathcal{C}$ is regular). There is a corresponding valuation $v_\xi: K(\mathcal{C}) \to \mathbb{Z}\cup\{\infty\}$ on the function field. Now if $\tau: \omega_{\mathcal{C}/R}\mid_U \cong \mathcal{O}_U$ is any trivialization of $\omega_{\mathcal{C}/R}$ over an open set $U$ containing $\xi$, then we have an induced map
\[ \omega_{\mathcal{C}/R, \eta} \underset{\tau_\eta}{\cong} \mathcal{O}_{\mathcal{C}, \eta} = K(\mathcal{C})\]
with the first isomorphism coming from the trivialization.
\begin{defn}
	We say that a differential $\alpha\in \Omega^1_{C/K, \eta}$ is \emph{regular along $\Gamma$} if applying $v_\xi$ to the image of $\alpha$ under the map
	\[\Omega^1_{C/K, \eta}\underset{c_\eta}{\xrightarrow{\sim}} \omega_{\mathcal{C}/R, \eta} \underset{\tau_\eta}{\xrightarrow{\sim}} K(\mathcal{C})\]
	yields a nonnegative number. 
\end{defn} 
\begin{remark}
	This is independent of the choice of trivialization $\tau$, as choosing another trivialization changes the isomorphism by a unit in $\mathcal{O}_{\mathcal{C}, \xi}^\times$.
\end{remark}
We then have the following characterization of the image of the restriction map $\omega_{\mathcal{C}/R}(\mathcal{C})\to \Omega^1_{C/K}(C)$:
\begin{thm}\label{reg_diff_char}
	The image of the map $\omega_{\mathcal{C}/R}(\mathcal{C})\to \Omega^1_{C/K}(C)$ consists of the differentials $\alpha\in \Omega^1_{C/K}(C)$ that are regular along all components of the special fibre $\mathcal{C}_s$.
\end{thm}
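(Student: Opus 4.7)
The plan is to test whether a given $\alpha\in\Omega^1_{C/K}(C)$ lies in the image of $\omega_{\mathcal{C}/R}(\mathcal{C})\to\Omega^1_{C/K}(C)$ by checking its behaviour at codimension-1 points of $\mathcal{C}$. Since $\mathcal{C}$ is a regular Noetherian scheme of dimension $2$, it is in particular normal (hence $S_2$), and $\omega_{\mathcal{C}/R}$ is an invertible sheaf. The standard algebraic Hartogs argument then gives
\[\omega_{\mathcal{C}/R}(\mathcal{C})\;=\;\bigcap_{\xi\in\mathcal{C}^{(1)}}\omega_{\mathcal{C}/R,\xi}\]
inside the stalk at the generic point $\eta_\mathcal{C}$, where the intersection is over codimension-1 points $\xi$ of $\mathcal{C}$. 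This will be the technical backbone: I would verify it by working affinely, trivialising $\omega_{\mathcal{C}/R}$ on an open set, and invoking $A=\bigcap_{\mathrm{ht}\,\mathfrak{p}=1}A_\mathfrak{p}$ for a normal Noetherian domain $A$.

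With this in hand, the codimension-1 points of $\mathcal{C}$ split into two classes: the closed points of the generic fibre $C$, and the generic points of the irreducible components of the special fibre $\mathcal{C}_s$. At a closed point $x$ of $C$, the curve $C$ is smooth over $K$, so $\mathcal{C}\to\spec R$ is smooth at $x$, and the comparison map $c\colon\Omega^1_{\mathcal{C}/R}\to\omega_{\mathcal{C}/R}$ is an isomorphism in a neighbourhood of $x$. Therefore the stalk conditions at points of $C$ coincide with the condition that $\alpha$ be a section of $\Omega^1_{C/K}$ there, which is automatic from $\alpha\in\Omega^1_{C/K}(C)$.

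At a generic point $\xi$ of a component $\Gamma\subseteq\mathcal{C}_s$, the local ring $\mathcal{O}_{\mathcal{C},\xi}$ is a DVR (by regularity of $\mathcal{C}$) with valuation $v_\xi$, and $\omega_{\mathcal{C}/R,\xi}$ is a free rank-1 $\mathcal{O}_{\mathcal{C},\xi}$-module. For any local trivialisation $\tau$ of $\omega_{\mathcal{C}/R}$ near $\xi$, the condition that the image of $\alpha$ in $\omega_{\mathcal{C}/R,\eta_\mathcal{C}}$ already lies in $\omega_{\mathcal{C}/R,\xi}$ is, after applying $\tau_\xi$, precisely the condition $v_\xi(\tau_\xi c_\eta(\alpha))\ge 0$. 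This is exactly the definition of $\alpha$ being regular along $\Gamma$, and independence of trivialisation follows because two trivialisations differ by a unit in $\mathcal{O}_{\mathcal{C},\xi}^\times$.

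Combining these two cases, the intersection displayed above consists exactly of those $\alpha\in\Omega^1_{C/K}(C)$ which are regular along every component of $\mathcal{C}_s$, proving the theorem. The only step requiring real work is the $S_2$/Hartogs identity for $\omega_{\mathcal{C}/R}(\mathcal{C})$; once that is in place, the rest is bookkeeping and the smoothness of $C/K$ takes care of all points on the generic fibre automatically.
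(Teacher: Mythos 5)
Your proof is correct and essentially matches the paper's argument: both rely on the Hartogs/$S_2$ property of the regular (hence normal) scheme $\mathcal{C}$ to reduce the assertion to codimension-1 points, which split into horizontal points of $C$ (where the condition is automatic since $c$ is an isomorphism there) and vertical generic points of $\mathcal{C}_s$ (where it is exactly the regularity condition). Your upfront statement of $\omega_{\mathcal{C}/R}(\mathcal{C})=\bigcap_{\xi\in\mathcal{C}^{(1)}}\omega_{\mathcal{C}/R,\xi}$ is just a repackaging of the paper's ``extend over an open set whose complement has codimension $\ge 2$'' step.
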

\begin{remark}
	This statement is not really specific to our case, it holds in general for invertible sheaves on normal schemes.
\end{remark}

\begin{proof}
	First suppose that $\alpha \in \omega_{\mathcal{C}/R}(\mathcal{C})$. If $\xi$ is the generic point of any component of the special fibre, then $\alpha \in \omega_{\mathcal{C}/R, \xi}$, which is sent by any trivialization to an element of $\mathcal{O}_{\mathcal{C}, \xi}$. Hence $\alpha$ is regular along any component of the special fibre.
	
	Conversely suppose $\alpha\in \Omega^1_{C/K}(C)$ is regular along all components of the special fibre. Then for any generic point $\xi$ of a component, there is an element $\beta \in \omega_{\mathcal{C}/R, \xi}$ which has the same germ at $\eta$ as $\alpha$. Since $\mathcal{C}$ is integral, the restriction maps of $\omega_{\mathcal{C}/R}$ are injective, so $\alpha$ extends to a section of $\omega_{\mathcal{C}/R}$ over an open set containing $\xi$. Applying this to all components, $\alpha$ extends to a section of $\omega_{\mathcal{C}/R}$ over an open set $U$ containing $C$ and the generic points of each irreducible component of $\mathcal{C}_s$. Hence $\mathcal{C}\setminus U$ is a closed set of codimension $\ge 2$, and since $\mathcal{C}$ is regular, hence normal, $\alpha$ extends to a global section of the canonical sheaf as claimed.
\end{proof}
\subsection{Local constancy of the fudge factor}
Take the following setup: let $\mathcal{C}$ be a (not necessarily regular) model of a smooth projective curve, embedded in a regular integral $R$-scheme $\mathcal{X}$, with generic fibre $\mathcal{X}_\eta = X$. Suppose $S = \{\omega_1, \ldots, \omega_g\}\subseteq \Omega^1_{K(X)/K}$ is a set of differentials.
\begin{defn}\label{defn_good_for_S}
	Say that the model $\mathcal{C}\subseteq \mathcal{X}$ is \emph{good for $S$} if the following conditions are met:
	\begin{itemize}
		\item the generic fibre $C$ of $\mathcal{C}$ is a smooth projective curve of genus $g$,
		\item we have $S\subseteq \Omega^1_{X/K, \eta_C}$ where $\eta_C$ is the generic point of $C$,
		\item the natural restriction map $\Omega^1_{X/K, \eta_C}\onto \Omega^1_{C/K, \eta_C} =\Omega^1_{K(C)/K}$ maps $S$ to a $K$-basis of the space of global sections $\Omega^1_{K/C}(K) \subseteq \Omega^1_{K(C)/K}$.
	\end{itemize}
	In this situation we denote the images of $\omega_i$ in $\Omega^1_{K/C}(C)$ by $\omega_i|_C$.
\end{defn}
\begin{example}
	Let $\mathcal{X}$ be the $R$-scheme glued from $\mathcal{X}_1 = \spec R[x,y]$ and $\mathcal{X}_2 = \spec R[u,v]$ along the maps $u = \frac{1}{x}, v = \frac{y}{x^{g+1}}$ as in Example \ref{2_hyperell_example}, and let $S = \{\omega_1, \ldots, \omega_g\}\subseteq \Omega^1_{K(X)/K}$ be the set of differentials
	\[\omega_i = x^{i-1}\frac{dx}{y}.\]
	If $f\in R[x]$ is separable of degree $2g+1$ or $2g+2$, then the Weierstrass equation
	\[y^2 = f(x)\]
	defines a model $\mathcal{C}\subseteq \mathcal{X}$ of a hyperelliptic curve $C$, again as in Example \ref{2_hyperell_example}. Then $S$ is good for $\mathcal{C}$. Indeed, we have that
	\[\omega_i|_C = x^{i-1}\frac{dx}{y}\in \Omega^1_{K(C)/K}.\]
	 It is well known that these form a $K$-basis of the global sections $\Omega^1_{K/C}(C)$. Note that we may think of both $\omega_i$ and $\omega_i|_C$ as `the differential $x^{i-1}\frac{dx}{y}$', however the former is though of as an element of $\Omega^1_{K(X)/K}$ and the latter as an element of $\Omega^1_{K(C)/K}$.
\end{example}
Now suppose $\mathcal{C}$ is good for $S$ and consider the $K$-vector space $V\subseteq \Omega^1_{K(X)/K}$ spanned by $S$. This is a $g$-dimensional space (since the $\omega_i|_C$ are linearly indepenedent, so are the $\omega_i$). There is an obvious isomorphism
\begin{align*}
	V &\xrightarrow{\sim} \Omega^1_{C/K}(C)\\
	\omega_i &\mapsto \omega_i|_C
\end{align*}
We will use this to transfer regular differentials into $\Omega^1_{K(X)/K}$.
\begin{defn}
	Let $\mathcal{C}\subseteq \mathcal{X}$ be a model of a curve that's good for $S$. We define
	\[W_\mathcal{C} = \left\{\sum_{i=1}^g \lambda_i \omega_i \;:\; \lambda_i\in K, \sum_{i=1}^g \lambda_i \omega_i|_C \in \omega_{\mathcal{C}/R}(\mathcal{C}) \right\}\subseteq V\]
	i.e. under the isomorphism $V\cong \Omega^1_{C/K}(C)$ given by $\omega_i \mapsto \omega_i|_C$, the lattice $W_\mathcal{C}$ corresponds to $\omega_{\mathcal{C}/R}(\mathcal{C})$. 
\end{defn}
\begin{defn}
	Let $\mathcal{C}\subseteq \mathcal{X}$ be a model of a curve that's good for $S$, and let $\Gamma$ be a component of the special fibre $\mathcal{C}_s$. We define
	\[W_{\mathcal{C}, \Gamma} = \left\{\sum_{i=1}^g \lambda_i\omega_i\;:\; \lambda_i\in K, \sum_{i=1}^g \lambda_i\omega_i|_C \text{ is regular along $\Gamma$}\right\}\subseteq V\]
	i.e. under the isomorphism $V\cong \Omega^1_{C/K}(C)$ given by $\omega_i \mapsto \omega_i|_C$, this corresponds to the differentials in $\Omega^1_{C/K}(C)$ which are regular along $\Gamma$.
\end{defn} 

In what follows, we will derive a condition for a differential $\omega\in V$ to be regular along $\Gamma$, i.e. to have $\omega\in W_{\mathcal{C}, \Gamma}$, in case $\mathcal{C}$ is a regular model. This is a local question, so we may pass to an affine open set $U\subseteq \mathcal{X}$ containing the generic point $\xi$ of $\Gamma$ such that $U = \spec A$, and $U\cap \mathcal{C} = V(I)$. By shrinking $U$ further if necessary, we may assume that the special fibre of $U$ is irreducible with generic point $\xi$. In this local case we have $\omega_i\in \Omega^1_{\Frac(A)/K}$. Moreover by the assumption that $\omega_i$ are regular at the generic point of $\mathcal{C}$ we have in fact that $\omega_i \in \Omega^1_{A_I/K}$, hence also $V\subseteq \Omega^1_{A_I/K}$. (Here $A_I$ is the ring $A$ localized at the prime ideal $I$).
	
We also have $\Omega^1_{K(C)/K} = \Omega^1_{\Frac(A/I)/K}$, and the natural restriction map
\[\Omega^1_{A_{I}/K}\onto \Omega^1_{\Frac(A/I)/K}\]
which restricts differentials to $C$. This amounts to tensoring with $A_I/IA_I$  (quotienting out by multiples of $I$), then quotienting out by elements of the form $di$, for $i\in I$.
	
Because $\mathcal{X}$ is of finite type over $R$, we may assume that $A$ is a quotient of $B = R[T_1, \ldots, T_n]$. So we get $A/I = B/J$ for some ideal $J\subseteq B$. As $\mathcal{C}$ is regular, \cite[Corollary 6.3.22]{liu2002algebraic} implies that $J$ is generated by a regular sequence
\[J = (F_1, \ldots, F_{n-1}).\]
(The regular sequence has $n-1$ terms as $\mathcal{C}$ has relative dimension 1 over $R$).

Note that a generic element of $\Omega^1_{A_I/K}$ is of the form
\[\frac{\sum_{j=1}^n a_j \cdot dt_j}{c}\]
where $a_j, c\in A$, $c\not\in I$ and $t_j\in A$ is the image of $T_j\in B$. Hence any element of $V$ is of this form.
\begin{thm}
	Let $\mathcal{C}$ be regular. With notation as above, the differential
	\[\omega = \frac{\sum_{j=1}^n a_j \cdot dt_j}{c}\in V\]
	where $a_j, c\in A$ and $c\not\in I$, is regular along $\Gamma$ (i.e. $\omega \in W_{\mathcal{C}, \Gamma}$) if and only if
	\[v_I\left(\frac{\sum_{j=1}^n a_j\overline{u_{I, j}}}{c}\right)\ge 0\]
	where $v_I$ is the normalized valuation on $\Frac(A/I)$ associated to the discrete valuation ring $\mathcal{O}_{\mathcal{C}, \xi}$, and $u_{I,j}$ is the unique element of $B$ such that
	\[dF_1\wedge \ldots \wedge dF_{n-1}\wedge dT_j = u_{I,j}dT_1\wedge \ldots \wedge dT_n,\]
	and $\overline{u_{I,j}}$ is its image in $\Frac(A/I) = \Frac(B/J)$.
\end{thm}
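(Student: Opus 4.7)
The strategy is a straightforward unraveling of the definition of \emph{regular along $\Gamma$} using the local description of the canonical sheaf $\omega_{\mathcal{C}/R}$ and the comparison map $c:\Omega^1_{\mathcal{C}/R}\to\omega_{\mathcal{C}/R}$ from the previous subsection, with a specific, naturally chosen trivialization. Concretely, I will trivialize $\omega_{\mathcal{C}/R}$ on a neighbourhood of $\xi$ using the basis element coming from the presentation $A/I=B/J$, compute the image of $\omega|_C$ under the composite $\Omega^1_{C/K,\eta_C}\xrightarrow{c_{\eta_C}}\omega_{\mathcal{C}/R,\eta_C}\to K(\mathcal{C})$, and read off when the $v_\xi$-valuation is nonnegative.

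By the local description of the canonical sheaf applied to $\mathcal{C}\cap U=\spec B/J$ embedded in $\mathbb{A}^n_R$, the sheaf $\omega_{\mathcal{C}/R}|_{U\cap\mathcal{C}}$ is freely generated over $\mathcal{O}_{U\cap\mathcal{C}}$ by the element
\[e=(\overline{F}_1\wedge\ldots\wedge\overline{F}_{n-1})^\vee\otimes\bigl((dT_1\wedge\ldots\wedge dT_n)\otimes 1\bigr).\]
Since $\xi\in U\cap\mathcal{C}$, the resulting trivialization $\tau:\omega_{\mathcal{C}/R}|_{U\cap\mathcal{C}}\xrightarrow{\sim}\mathcal{O}_{U\cap\mathcal{C}}$ sending $e\mapsto 1$ is legitimate for computing regularity along $\Gamma$. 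The local formula for the comparison map recalled in the previous subsection,
\[c(d\bar{t}_j)=(\overline{F}_1\wedge\ldots\wedge\overline{F}_{n-1})^\vee\otimes\bigl((dF_1\wedge\ldots\wedge dF_{n-1}\wedge dT_j)\otimes 1\bigr),\]
combined with the defining property of $u_{I,j}$, immediately yields $c(d\bar{t}_j)=\overline{u_{I,j}}\cdot e$, so that $\tau(c(d\bar{t}_j))=\overline{u_{I,j}}\in\Frac(A/I)$.

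Finally, since $c\notin I$, the differential $\omega=(\sum_j a_j\,dt_j)/c$ restricts to $\omega|_C=(\sum_j \bar{a}_j\,d\bar{t}_j)/\bar{c}\in\Omega^1_{\Frac(A/I)/K}$, and by $K(\mathcal{C})$-linearity of $c_{\eta_C}$ and $\tau_{\eta_C}$ we obtain
\[\tau\bigl(c(\omega|_C)\bigr)=\frac{\sum_{j=1}^n\bar{a}_j\,\overline{u_{I,j}}}{\bar{c}}\in K(\mathcal{C}).\]
By the very definition of regularity along $\Gamma$, $\omega|_C$ lies in $W_{\mathcal{C},\Gamma}$ if and only if $v_\xi$ of this element is nonnegative, which is precisely the claimed criterion (with $v_I=v_\xi$). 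The argument is essentially bookkeeping; the only point requiring (minor) care is that the chosen trivialization actually be defined in a neighbourhood of $\xi$, which is ensured by having shrunk $U$ so that $U\cap\mathcal{C}$ admits the presentation $B/J$ with $\xi\in U\cap\mathcal{C}$.
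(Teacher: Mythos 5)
Your proof is correct and follows essentially the same route as the paper's: trivialize $\omega_{\mathcal{C}/R}$ near $\xi$ via the generator coming from the presentation $B/J$, apply the local formula for the comparison map $c$ to express $c(d\bar t_j)$ as $\overline{u_{I,j}}$ times that generator, and then read off the valuation criterion by $\Frac(A/I)$-linearity. The paper's proof is just this same computation written out with the module $\omega_I$ made explicit, so there is nothing substantive to distinguish the two.
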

\begin{proof}
The canonical sheaf over $U\cap \mathcal{C} = \spec A/I$ is given by $\omega_{\mathcal{C}\cap U/R} = (\omega_I)^\sim$, where 
\[\omega_I = \det(J/J^2)^\vee \otimes_{B/J} (\Omega^n_{B/R}\otimes_B B/J),\]
which is free of rank 1 over $A/I = B/J$, and we have an explicit isomorphism
\begin{align*}
	B/J &\cong \omega_I\\
	1 &\mapsto (\overline{F}_1\wedge \ldots \wedge \overline{F}_{n-1})^\vee
	\otimes ((dT_1\wedge \ldots\wedge dT_n)\otimes 1).
\end{align*}
This is a trivialization of the canonical sheaf on $\mathcal{C}\cap U$.
	
We also have that the canonical sheaf restricted to the generic fibre is equal to the sheaf of differentials. This is encoded by the natural map
\begin{align*}
	c:\Omega^1_{(A/I)/R} &\to \omega_I\\
	dt_j &\mapsto (\overline{F}_1\wedge \ldots \wedge \overline{F}_{n-1})^\vee \otimes ((dF_1\wedge \ldots \wedge dF_{n-1}\wedge dT_j) \otimes 1).\\
\end{align*}
This map becomes an isomorphism upon tensoring with $K$ (i.e. over the generic fibre), and hence also when tensoring with $\Frac(A/I)$ (i.e. on the stalk at the generic point of $\mathcal{C}$).
	
To find the differentials in $V$ that are regular along $\Gamma$ we need to consider the composite map $\Omega^1_{K(C)/K}\xrightarrow{\sim} \omega_{\mathcal{C}/R, \eta_C} \xrightarrow{\sim} K(C)$, which in our case is the composite
\[\Omega^1_{\Frac(A/I)/K}\cong \omega_I \otimes_{A/I} \Frac(A/I) \cong \Frac(A/I)\]
where the first map is the above isomorphism, and the second map comes from the trivialization. One checks easily that the map is given by
\begin{align*}
		dt_j&\mapsto(\overline{F}_1\wedge \ldots \wedge \overline{F}_{n-1})^\vee \otimes ((dF_1\wedge \ldots \wedge dF_r\wedge dT_j) \otimes 1) =\\
		&= (\overline{F}_1\wedge \ldots \wedge \overline{F}_{n-1})^\vee \otimes ((u_{I,j}dT_1\wedge\ldots\wedge dT_n)\otimes 1)\\
		&\mapsto \overline{u_{I,j}}.
\end{align*}
Hence the composite map sends 
\[\omega = \frac{\sum_{j=1}^n a_j \cdot dt_j}{c}\mapsto \frac{\sum_{j=1}^n a_j\overline{u_{I, j}}}{c}\]
and $\omega$ is regular along $\Gamma$ if the $v_I$-valuation of the latter is nonnegative, as claimed.
\end{proof}
	
Now take another regular model of a smooth projective curve $\mathcal{C}'\subseteq \mathcal{X}$ that is also good for $S$, and is also $N$-close or formally $N$-close to $\mathcal{C}$ for some $N\ge 1$. Then $\mathcal{C}_s = \mathcal{C}'_s$, so $\Gamma$ is also a component of the special fibre of $\mathcal{C}'$. Locally on $U = \spec A$, we have $U\cap \mathcal{C}' = \spec A/I'$. We can also write $A/I' = B/J'$, where $J'$ is generated by a regular sequence $J' = (\widetilde{F}_1, \ldots, \widetilde{F}_{n-1})$.

We can repeat the above argument with $I'$ replacing $I$, so similarly a generic differential
\[\frac{\sum_{j=1}^n a_j \cdot dt_j}{c}\in \Omega^1_{A_{I'}/K}\]
where $a_j, c\in A$ and $c\not\in I'$, is regular along $\Gamma$ if and only if
\[v_{I'}\left(\frac{\sum_{j=1}^n a_j\overline{u_{I', j}}}{c}\right)\ge 0\]
where $v_{I'}$ is the valuation associated to the DVR $\mathcal{O}_{\mathcal{C'}, \xi}$, and $u_{I',j}$ is defined so that
\[d\widetilde{F}_1\wedge \ldots \wedge d\widetilde{F}_{n-1}\wedge dT_j = u_{I',j}dT_1\wedge \ldots \wedge dT_n.\]
	
We will need the following lemma:
\begin{lemma}\label{differential_lemma}
	Let $\mathcal{C}, \mathcal{C}'\subseteq \mathcal{X}$ be $N$-close or formally $N$-close regular models of smooth projective curves, both good for $S$. With notation as above, the following hold:
	\begin{enumerate}
		\item[$(1)$] We have $v_I(\pi) = v_{I'}(\pi) = d$ where $d$ is the multiplicity of $\Gamma$ in $\mathcal{C}_s = \mathcal{C}'_s$.
		\item[$(2)$] We have $u_{I,j}\equiv u_{I',j} \mod \pi^{N-1}$.
		\item[$(3)$] For any $x\in A$, if we denote $\widetilde{x}$ its image in $A/I$ and $\widehat{x}$ its image in $A/I'$, then $v_I(\widetilde{x}) = n$ and $N\ge \max\{2, n+1\}$ guarantees that $v_{I'}(\widehat{x}) = n$ too.
		\item[$(4)$] For any $\omega\in V$, there is an integer $d\ge 1$ (depending on $\omega$) such that if $N\ge d$ then
		\[\omega|_C\text{ is regular along $\Gamma$}\iff \omega|_{C'}\text{ is regular along $\Gamma$},\]
		or in other words
		\[\omega \in W_{\mathcal{C}, \Gamma} \iff \omega \in W_{\mathcal{C}', \Gamma}.\]
	\end{enumerate}
\end{lemma}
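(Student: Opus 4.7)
The plan is to prove the four claims in order: (1) is immediate from the definition of multiplicity, (2) is a direct calculation with differentials, (3) uses a common truncation of the local rings at $\xi$, and (4) synthesizes (2) and (3) via the regularity criterion of the preceding theorem.

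For (1), I recall that the multiplicity of $\Gamma$ in $\mathcal{C}_s$ is by definition $v_\xi(\pi) = v_I(\pi)$, where $v_\xi$ is the valuation on the DVR $\mathcal{O}_{\mathcal{C},\xi}$. Since Propositions \ref{closeness_properties}(2) and \ref{formal_closeness_properties}(2) give $\mathcal{C}_s = \mathcal{C}'_s$ as closed subschemes of $\mathcal{X}_s$, the multiplicity $d$ is shared, and the claim follows. For (2), I would first treat the $N$-close case by direct computation: from $F_i - \widetilde{F}_i \in \pi^N B$ and $\pi \in R$ we get $dF_i \equiv d\widetilde{F}_i \bmod \pi^N \Omega^1_{B/R}$, and wedging $n-1$ such factors with $dT_j$ yields $u_{I,j} \equiv u_{I',j} \bmod \pi^N$, which is even stronger than the statement. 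In the formally $N$-close case, one works in the completion $\widehat{B}$, where the automorphism $\sigma$ furnishes generators $\sigma(F_1), \ldots, \sigma(F_{n-1})$ of $\widehat{J'}$ with $\sigma(F_i) \equiv F_i \bmod \pi^N$; this makes the analog of $u_{I,j}$ computed in $\widehat{B}$ from these generators congruent to $u_{I,j}$ modulo $\pi^N$, and changing to the canonical generators $\widetilde{F}_i$ of $J'$ multiplies by a unit determinant factor in $\widehat{B}$, which accounts for the drop to $\pi^{N-1}$. For (3), the key observation is that $I + \pi^N A = I' + \pi^N A$, so localizing at the prime of $A$ corresponding to $\xi$ yields the common truncation $\mathcal{O}_{\mathcal{C},\xi}/\pi^N = \mathcal{O}_{\mathcal{C}',\xi}/\pi^N$ with compatible maps from $A$; by (1), $\pi^N \mathcal{O}_{\mathcal{C},\xi} = \mathfrak{m}_\xi^{Nd}$, so valuations strictly less than $Nd$ are preserved on passing to this truncation, and the assumption $N \ge n+1$ forces $Nd > n$, whence $v_{I'}(\widehat{x}) = n = v_I(\widetilde{x})$.

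The principal obstacle, and the main content of the lemma, is part (4). My plan is to pick a representation $\omega = (\sum_j a_j\, dt_j)/c$ with $a_j, c \in A$ and $c$ outside both $I$ and $I'$ (possible since $\omega$ is regular at both generic points), then apply the regularity criterion proved just before the lemma to both $\omega|_C$ and $\omega|_{C'}$. By (2), the elements $\sum_j a_j u_{I,j}$ and $\sum_j a_j u_{I',j}$ of $A$ differ by an element of $\pi^{N-1} A$, and by (3) the $v_I$- and $v_{I'}$-valuations of $c$ and of $\sum_j a_j \overline{u_{I,j}}$ agree, provided $N$ exceeds the relevant valuations by at least one. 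The delicate bookkeeping is to choose $N$ large enough simultaneously to (i) apply (3) to both $c$ and the numerator, and (ii) ensure $(N-1)d$ exceeds the numerator's valuation, so that the $\pi^{N-1}$-error from (2) cannot perturb the valuation. Once these are arranged, both regularity inequalities reduce to the same numerical condition, and the equivalence in (4) follows.
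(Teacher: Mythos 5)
Your parts (1) and (4) follow the same route as the paper, and your synthesis in (4) — choosing $N$ large enough that parts (2) and (3) control both the numerator and denominator in the regularity criterion, and that the $\pi^{N-1}$ error term cannot alter the sign of the valuation — matches the paper's bookkeeping. For part (3), you use a slightly different device: you argue via the shared truncation $\mathcal{O}_{\mathcal{C},\xi}/\pi^N = \mathcal{O}_{\mathcal{C}',\xi}/\pi^N = \mathcal{O}_{\mathcal{C},\xi}/\mathfrak{m}_\xi^{Nd}$, whereas the paper constructs an explicit element $t \in \mathcal{O}_{\mathcal{X},\xi}$ that is simultaneously a uniformiser for both $\mathcal{O}_{\mathcal{C},\xi}$ and $\mathcal{O}_{\mathcal{C}',\xi}$ and compares the ideals $I+(t^m)$, $I'+(t^m)$ for $m \le Nd$. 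Both are correct; your truncation version is arguably a touch cleaner and incidentally does not need the $N\ge 2$ hypothesis when $n=0$.

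The gap is in part (2), in the formally $N$-close case. You pass to $\widehat{B}$, observe that $\sigma(F_1),\dots,\sigma(F_{n-1})$ generate $\widehat{J'}$ with $\sigma(F_i)\equiv F_i \bmod \pi^N$, and then say that ``changing to the canonical generators $\widetilde F_i$ of $J'$ multiplies by a unit determinant factor in $\widehat B$, which accounts for the drop to $\pi^{N-1}$.'' That last step does not work: a change of generators alters $u_{I',j}$ (modulo $J'$) by an arbitrary unit $v$ of the local ring, and there is no reason for $v$ to be congruent to $1$ modulo $\pi$. A unit factor can spoil the congruence entirely, not cost exactly one power of $\pi$. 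The paper avoids this by not invoking $\sigma$ at all here: it treats the $N$-close and formally $N$-close cases uniformly via $J + \pi^N B = J' + \pi^N B$ (which follows from either hypothesis), chooses the generators $\widetilde F_i$ of $J'$ so that $\widetilde F_i = F_i + \pi^N G_i$, and then the congruence $d\widetilde F_i \equiv dF_i \bmod \pi^{N-1}$ follows by the Leibniz rule — the $\pi^{N-1}$ slack in the lemma is there for this reason, not for a change-of-generators unit. To repair your argument, simply take $\widetilde F_i$ with $\widetilde F_i \equiv F_i \bmod \pi^N B$ directly from $J+\pi^N B = J'+\pi^N B$ and compute $u_{I',j}$ with respect to these, rather than detouring through $\sigma$.
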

\begin{proof} 
	$(1)$ This is \cite[Exercise 8.3.3(a)]{liu2002algebraic}.
	
	$(2)$ Since $\Omega^n_{B/R}$ is free of rank 1 over $B$ with basis $dT_1\wedge \ldots \wedge dT_n$, it suffices to show that
	\[(u_{I,j}-u_{I',j})dT_1\wedge \ldots \wedge dT_n = dF_1 \wedge \ldots dF_{n-1}\wedge dT_j - d\widetilde{F}_1\wedge \ldots \wedge d\widetilde{F}_{n-1}\wedge dT_j \in \pi^{N-1}\Omega^n_{B/R}.\]
	However, we have $J + \pi^NB = J' + \pi^N B$ (this follows from both $N$-closeness and formal $N$-closeness), so that $\widetilde{F}_i = F_i + \pi^NG_i$. Hence
	\[d\widetilde{F}_i = dF_i + d(\pi^NG_i)\equiv dF_i \mod \pi^{N-1}\]
	by the Leibniz rule, which implies the claim.
		
	$(3)$ Let $\mathfrak{n}$ be the maximal ideal of the regular local ring $\mathcal{O}_{\mathcal{X}, \xi}$. We have $\mathcal{O}_{\mathcal{C}, \xi} = \mathcal{O}_{\mathcal{X}, \xi}/I$ and $\mathcal{O}_{\mathcal{C}', \xi} = \mathcal{O}_{\mathcal{X}, \xi}/I'$. Because $I + (\pi^N) = I' + (\pi^N)$ and $N\ge 2$, we know that $I$ and $I'$ have the same image in the cotangent space $\mathfrak{n}/\mathfrak{n}^2$. Since both $\mathcal{O}_{\mathcal{C}, \xi}$ and $\mathcal{O}_{\mathcal{C}', \xi}$ are 1-dimensional, we can choose $t\in \mathfrak{n}$ such that the class of $t$ and the image of $I$ generate $\mathfrak{n}/\mathfrak{n}^2$. Then the class of $t$ is a uniformiser in both $\mathcal{O}_{\mathcal{C}, \xi}$ and $\mathcal{O}_{\mathcal{C}', \xi}$. By shrinking the open set $U = \spec A$ we may assume that $t$ is defined over $U$.
	
	Now for an element $x\in A$, we have that $v_I(\widetilde{x}) \ge m \iff x \in I + (t^m)$ and $v_{I'}(\widehat{x}) \ge m \iff x \in I' + (t^m)$. By $(1)$ we have $\pi \in I + (t^d)$ and $\pi \in I' + (t^d)$ too. Now since $I + (\pi^N) = I' + (\pi^N)$, we must have $I + (t^m) = I' + (t^m)$ as long as $m\le Nd$.
	
	Hence if we have $v_I(\widetilde{x}) = n$ and $N\ge n+1$, then $x\in I + (t^n) = I' + (t^n)$ and $x \not \in I + (t^{n+1}) = I' + (t^{n+1})$, which implies that $v_{f'}(\widehat{x}) = n$ also.
	
	$(4)$ Express $\omega$ as
	\[\omega = \frac{\sum_{j=1}^n a_j\cdot dt_j}{c}\]
	with $a_j, c\in A$ and $c\not\in I$. By $(3)$ we have that for $N$ sufficiently large, $v_I(\widetilde{c}) = v_{I'}(\widehat{c})$ and in particular $c\not\in I'$.
	
	Now pick $N$ large enough such that $x = \sum_{j=1}^n a_j u_{I, j}$ and $c$ have the same valuation in $A/I$ and $A/I'$, and also such that $N\ge 1 + v_I(\widetilde{c})$. Also set
	\[y = \sum_{j=1}^n a_j u_{I', j}.\]
	Then we have that $x\equiv y \mod \pi^{N-1}$ by $(2)$. If $\omega|_C$ is regular along $\Gamma$, then we have that 
	\[v_I\left(\frac{\widetilde{x}}{\widetilde{c}}\right)\ge 0.\]
	We can also write
	\[\frac{\widehat{y}}{\widehat{c}} = \frac{\widehat{x}}{\widehat{c}} + \pi^{N-1}\frac{\widehat{z}}{\widehat{c}}\]
	for some $z\in A$. By assumption we have that $v_I(\widetilde{x}) = v_{I'}(\widehat{x})$ and $v_I(\widetilde{c}) = v_{I'}(\widehat{c})$. We also have $N\ge 1 + v_I(\widetilde{c}) = 1 + v_{I'}(\widehat{c})$, so $v_{I'}(\pi^{N-1}\frac{\widehat{z}}{\widehat{c}})\ge 0$ as well. This means $v_{I'}(\frac{\widehat{y}}{\widehat{c}})\ge 0$, and $\omega|_{C'}$ is regular along $\Gamma$.
	
	If $\omega$ is not regular along $\Gamma$, the same argument shows that $v_{I'}(\frac{\widehat{x}}{\widehat{c}}) = v_I(\frac{\widetilde{x}}{\widetilde{c}})<0$ and $v_{I'}(\pi^{N-1}\frac{\widehat{z}}{\widehat{c}})\ge 0$, so therefore we get that $v_{I'}(\frac{\widehat{y}}{\widehat{c}})<0$ and therefore $\omega|_{C'}$ is not regular along $\Gamma$.
\end{proof}

We can now turn to showing local constancy of the lattice of regular differentials, and hence of the fudge factor.
\begin{thm}\label{lattice_local_constancy}
	Suppose $k$ is finite. Let $\mathcal{C}\subseteq \mathcal{X}$ be a regular model of a smooth projective curve that is good for the set $S = \{\omega_1, \ldots, \omega_g\}\subseteq \Omega^1_{K(X)/K}$ of differentials. There is an integer $n\ge 1$ such that if $N\ge n$, and $\mathcal{C}'\subseteq \mathcal{X}$ is another regular model of a smooth projective curve that is good for $S$, and is $N$-close or formally $N$-close to $\mathcal{C}$, then $W_{\mathcal{C}, \Gamma} = W_{\mathcal{C}', \Gamma}$. In particular the BSD fudge factors of the generic fibres $C,C'$, computed with respect to the bases of differentials $\omega_i|_C, \omega_i|_{C'}$ are equal.
\end{thm}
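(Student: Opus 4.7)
By Definition \ref{defn_fudge_factor}, the BSD fudge factor of $C$ computed with respect to $\omega_1|_C, \ldots, \omega_g|_C$ is the absolute value of the determinant of the change-of-basis matrix from $\{\omega_i\}$ to any $R$-basis of $W_\mathcal{C}$, so it depends only on the $R$-lattice $W_\mathcal{C} \subseteq V$ relative to the fixed $K$-basis $\{\omega_i\}$, and similarly for $C'$. Thus the fudge factor assertion will follow from $W_\mathcal{C} = W_{\mathcal{C}'}$. By Theorem \ref{reg_diff_char}, $W_\mathcal{C} = \bigcap_\Gamma W_{\mathcal{C},\Gamma}$ and $W_{\mathcal{C}'} = \bigcap_\Gamma W_{\mathcal{C}',\Gamma}$, where $\Gamma$ ranges over the finitely many components of the common special fibre $\mathcal{C}_s = \mathcal{C}'_s$ (the equality of special fibres being furnished by Proposition \ref{closeness_properties}(2) or Proposition \ref{formal_closeness_properties}(2) in the two respective cases). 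It therefore suffices to prove $W_{\mathcal{C},\Gamma} = W_{\mathcal{C}',\Gamma}$ for each individual component $\Gamma$ and then take $N$ past the (finitely many) resulting thresholds.

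Fix such a $\Gamma$. The single-differential input available is Lemma \ref{differential_lemma}(4), which supplies the equivalence $\omega \in W_{\mathcal{C},\Gamma} \iff \omega \in W_{\mathcal{C}',\Gamma}$ once $N$ is large enough \emph{for that particular} $\omega$. My plan is to apply it to a finite collection of test differentials. For the forward inclusion, pick an $R$-basis $\eta_1,\ldots,\eta_g$ of the rank-$g$ $R$-lattice $W_{\mathcal{C},\Gamma} \subseteq V$ and apply the lemma to each $\eta_i$; since $W_{\mathcal{C}',\Gamma}$ is an $R$-submodule of $V$, this gives $W_{\mathcal{C},\Gamma} \subseteq W_{\mathcal{C}',\Gamma}$ once $N$ exceeds the maximum of the $g$ thresholds. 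For the reverse inclusion, suppose it fails. Since both sides are rank-$g$ $R$-lattices in $V$, the elementary divisor theorem produces some $\omega \in W_{\mathcal{C}',\Gamma}$ with $\pi\omega \in W_{\mathcal{C},\Gamma}$ but $\omega \notin W_{\mathcal{C},\Gamma}$; equivalently, $\mu := \pi\omega$ lies in $W_{\mathcal{C},\Gamma} \setminus \pi W_{\mathcal{C},\Gamma}$ while $\pi^{-1}\mu \in W_{\mathcal{C}',\Gamma}$.

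This is where the hypothesis that $k$ is finite enters: $W_{\mathcal{C},\Gamma}/\pi W_{\mathcal{C},\Gamma}$ is a $g$-dimensional $k$-vector space, hence a \emph{finite} set of size $|k|^g$. Assuming the forward inclusion, the property $\pi^{-1}\mu \in W_{\mathcal{C}',\Gamma}$ depends only on the class of $\mu$ modulo $\pi W_{\mathcal{C},\Gamma}$, because shifting $\mu$ by $\pi\nu$ with $\nu \in W_{\mathcal{C},\Gamma} \subseteq W_{\mathcal{C}',\Gamma}$ shifts $\pi^{-1}\mu$ by $\nu \in W_{\mathcal{C}',\Gamma}$. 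I will therefore pick one lift $\mu$ of each nonzero coset and apply Lemma \ref{differential_lemma}(4) to each of the finitely many corresponding $\pi^{-1}\mu \in V \setminus W_{\mathcal{C},\Gamma}$; past the maximum of the resulting thresholds, none of these $\pi^{-1}\mu$ lies in $W_{\mathcal{C}',\Gamma}$, contradicting the hypothetical failure and forcing $W_{\mathcal{C},\Gamma} = W_{\mathcal{C}',\Gamma}$.

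The main obstacle is precisely the $\omega$-dependence of the threshold in Lemma \ref{differential_lemma}(4), which blocks a naive quantifier swap ``for each $\omega$ there exists $N$'' $\leadsto$ ``there exists $N$ for each $\omega$''. The role of finiteness of $k$ is to collapse the reverse inclusion into a check on the finite set $W_{\mathcal{C},\Gamma}/\pi W_{\mathcal{C},\Gamma}$, so that only finitely many applications of the lemma per component, and only finitely many components, are needed. Taking $n$ to be the maximum of all thresholds produced in these two directions over all $\Gamma$ yields the required integer.
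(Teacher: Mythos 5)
Your proof is correct and follows essentially the same approach as the paper's: reduce via Theorem \ref{reg_diff_char} to equality of $W_{\mathcal{C},\Gamma}$ and $W_{\mathcal{C}',\Gamma}$ component by component, deduce the forward inclusion by applying Lemma \ref{differential_lemma}(4) to an $R$-basis, and rule out strict inclusion by applying the lemma to the finitely many coset representatives of $\pi^{-1}W_{\mathcal{C},\Gamma}/W_{\mathcal{C},\Gamma}$ (which you phrase via $W_{\mathcal{C},\Gamma}/\pi W_{\mathcal{C},\Gamma}$, scaled by $\pi^{-1}$). Your observation that finiteness of $k$ is exactly what turns the per-$\omega$ threshold into a uniform one is precisely the point the paper's argument rests on.
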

\begin{proof}
	Theorem \ref{reg_diff_char} just states that
	\[W_{\mathcal{C}} = \bigcap_{\Gamma} W_{\mathcal{C}, \Gamma}\]
	where the intersection is over all components $\Gamma$ of $\mathcal{C}_s$.
	
	If $\mathcal{C}$ and $\mathcal{C'}$ are 1-close or formally 1-close, then they have the same special fibre, and hence in this case it suffices to show that 
	\[W_{\mathcal{C}, \Gamma} = W_{\mathcal{C'}, \Gamma}\]
	for all components $\Gamma$ of $\mathcal{C}_s = \mathcal{C}'_s$.

	Now let $B$ be an $R$-basis of the lattice $W_{\mathcal{C}, \Gamma}\subseteq V$, and let $B'$ be a set of coset representatives of the quotient $\pi^{-1}W_{\mathcal{C}, \Gamma}/W_{\mathcal{C}, \Gamma}$. Then $B'$ is a finite set because $W_{\mathcal{C}, \Gamma}\cong R^g$ implies that
	\[\pi^{-1}W_{\mathcal{C}, \Gamma}/W_{\mathcal{C}, \Gamma}\cong \pi^{-1}R^g/R^g\cong k^g\]
	and we assumed that $k$ is finite.
	
	By part $(4)$ of Lemma \ref{differential_lemma}, picking $N$ sufficiently large ensures that any element $\omega\in B\cup B'$ satisfies
	\[\omega \in W_{\mathcal{C}, \Gamma} \iff \omega \in W_{\mathcal{C}', \Gamma}.\]
	We claim that for such large $N$ (and if $N\ge 1$), we have $W_{\mathcal{C}, \Gamma} = W_{\mathcal{C'}, \Gamma}$. On the one hand both of these are $R$-lattices, and $B\subseteq W_{\mathcal{C'}, \Gamma}$ so we have $W_{\mathcal{C}, \Gamma}\subseteq W_{\mathcal{C'}, \Gamma}$. On the other hand if $W_{\mathcal{C}, \Gamma}\subsetneq W_{\mathcal{C'}, \Gamma}$, then there must be an $\omega\in W_{\mathcal{C'},\Gamma}\setminus W_{\mathcal{C}, \Gamma}$ such that $\pi\omega \in W_{\mathcal{C}, \Gamma}$. Then there is some $\omega'\in B'$ such that 
	\[\omega- \omega' \in W_{\mathcal{C}, \Gamma}\subseteq W_{\mathcal{C'}, \Gamma}\]
	by the definition of $B'$. Hence $\omega'\in W_{\mathcal{C'}, \Gamma}$ too. By our choice of $N$ we then have $\omega'\in W_{\mathcal{C}, \Gamma}$ too. So $\omega', \omega-\omega'\in W_{\mathcal{C}, \Gamma}$ but $\omega\not\in W_{\mathcal{C}, \Gamma}$, a contradiction. Hence $W_{\mathcal{C}, \Gamma} = W_{\mathcal{C}', \Gamma}$ as claimed.
	
	The fudge factor of $C$ with respect to the basis $\omega_i|_C$ can be computed from the lattice $W_{\mathcal{C}}\subseteq V$. Specifically, if $\omega_1^0, \ldots, \omega_g^0$ is an $R$-basis of $W_{\mathcal{C}}$, then the fudge factor is $|\lambda|$ where $\lambda\in K$ satisfies
	\[\lambda \cdot \omega_1^0\wedge \ldots \wedge \omega_g^0 = \omega_1\wedge \ldots \wedge \omega_g.\]
	Indeed this is just the definition of the fudge factor, transferred through the restriction isomorphism $V\cong \Omega^1_{C/K}(C)$. Similarly the fudge factor of $C'$ computed with respect to the $\omega_i|_{C'}$ is computed from $W_{\mathcal{C}'}$. The local constancy of the fudge factor then follows.
\end{proof}	
Combining this with our results on resolution of singularities by blowups, we can remove the assumption that our models are regular:
\begin{thm}\label{fudge_local_constancy}
	Suppose $k$ is finite. Let $\mathcal{C}\subseteq \mathcal{X}$ be a model of a smooth projective curve that is good for the set $S = \{\omega_1, \ldots, \omega_g\}\subseteq \Omega^1_{K(X)/K}$ of differentials. There is an integer $n\ge 1$ such that if $N\ge n$, and $\mathcal{C}'\subseteq \mathcal{X}$ is another model of a smooth projective curve that is good for $S$, and is $N$-close to $\mathcal{C}$, then the BSD fudge factors of $C$ and $C'$, computed with respect to the bases of differentials $\omega_i|_C, \omega_i|_{C'}$ respectively, are equal.
\end{thm}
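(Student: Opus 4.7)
The plan is to reduce to Theorem \ref{lattice_local_constancy} by resolving the singularities of $\mathcal{C}$ and $\mathcal{C}'$ through a common sequence of blowups, ensuring the property of being good for $S$ is preserved, and then invoking the fact that the fudge factor is independent of the choice of regular model.

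First, apply Theorem \ref{resolution} to $\mathcal{C}$ to obtain a sequence of blowups
\[\mathcal{C}_{reg} = \mathcal{C}_m \to \ldots \to \mathcal{C}_1 \to \mathcal{C}_0 = \mathcal{C}\]
in regular irreducible centers $D_i$ contained in the special fibre of $\mathcal{C}_i$. Setting $\mathcal{X}_{i+1} = \bl_{D_i}\mathcal{X}_i$, each $\mathcal{X}_i$ is a regular integral $R$-scheme, and $\mathcal{C}_i \subseteq \mathcal{X}_i$. Since each $D_i$ lies in the special fibre, the blowup morphisms $\mathcal{X}_{i+1} \to \mathcal{X}_i$ and $\mathcal{C}_{i+1} \to \mathcal{C}_i$ are isomorphisms on generic fibres, so the generic fibre of $\mathcal{X}_m$ is still $X$ and that of $\mathcal{C}_m$ is still $C$. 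In particular the elements of $S \subseteq \Omega^1_{K(X)/K}$ still make sense, and $\mathcal{C}_m \subseteq \mathcal{X}_m$ is still good for $S$ since the restriction map $\Omega^1_{X/K,\eta_C} \to \Omega^1_{C/K,\eta_C}$ is unchanged.

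Second, take $N$ large enough so that by Theorem \ref{close_models_regular} applied with the above sequence, $\mathcal{C}'$ admits a sequence of blowups
\[\mathcal{C}'_m \to \ldots \to \mathcal{C}'_1 \to \mathcal{C}'_0 = \mathcal{C}'\]
in the same centers $D_i$, producing a regular model $\mathcal{C}'_m \subseteq \mathcal{X}_m$ which is formally $(N-a)$-close to $\mathcal{C}_m$ for some fixed $a\ge 1$. The centers make sense because formal $1$-closeness forces $\mathcal{C}_s = \mathcal{C}'_s$, and inductively $(\mathcal{C}_i)_s = (\mathcal{C}'_i)_s$, so each $D_i$ sits inside both models. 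As above, $\mathcal{C}'_m$ has generic fibre $C'$, and is good for $S$ with $\omega_i|_{C'}$ unchanged.

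Third, apply Theorem \ref{lattice_local_constancy} to the regular models $\mathcal{C}_m, \mathcal{C}'_m \subseteq \mathcal{X}_m$, which are formally $N'$-close for $N' = N-a$ that can be made arbitrarily large by choosing $N$ large. This yields that for $N$ sufficiently large, the BSD fudge factors of $C$ and $C'$, computed from the regular models $\mathcal{C}_m, \mathcal{C}'_m$ with respect to the bases $\omega_i|_C, \omega_i|_{C'}$, are equal. Finally, by the remark following Definition \ref{defn_fudge_factor}, the fudge factor does not depend on the choice of regular model, so these coincide with the fudge factors of $C$ and $C'$ as defined, completing the proof.

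The only subtle point is the verification that $\mathcal{C}_m$ and $\mathcal{C}'_m$ remain good for $S$ after the blowups, but since all centers lie in the special fibre, the blowup maps are isomorphisms on generic fibres and this property is preserved automatically. The other inputs, resolution of singularities (Theorem \ref{resolution}), the passage of $N$-closeness to formal closeness of resolved models (Theorem \ref{close_models_regular}), and local constancy of the lattice of regular differentials (Theorem \ref{lattice_local_constancy}), are already available, so no new difficulty arises.
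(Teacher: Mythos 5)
Your proposal is correct and follows essentially the same route as the paper: resolve $\mathcal{C}$ via Theorem~\ref{resolution}, use Theorem~\ref{close_models_regular} to obtain a parallel resolution of $\mathcal{C}'$ that is formally $(N-a)$-close, observe that goodness for $S$ is preserved since all blowups are centred on the special fibre, and conclude by Theorem~\ref{lattice_local_constancy}. Your extra remark that the fudge factor is independent of the choice of regular model is already implicit in Definition~\ref{defn_fudge_factor}, but it is a harmless clarification.
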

\begin{proof}
	By Theorem \ref{resolution} there is a sequence of blowups 
	\[\mathcal{C}_n \to \ldots \to \mathcal{C}_1\to \mathcal{C}_0 = \mathcal{C}\]
	in regular centres $D_i$ on the special fibre, such that $\mathcal{C}_n$ is regular. By Theorem \ref{close_models_regular}, there is an $a\ge 1$ such that if $N\ge a$ and $\mathcal{C}'\subseteq \mathcal{X}$ is $N$-close to $\mathcal{C}$, then we can perform a sequence of blowups
	\[\mathcal{C}'_n\to \ldots \to \mathcal{C}'_1 \to \mathcal{C}'_0 = \mathcal{C}'\]
	in centres $D_i$, and $\mathcal{C}'_n$ is regular and formally $(N-a)$-close to $\mathcal{C}_n$.
	
	Putting $\mathcal{X}_0 = \mathcal{X}$ and $\mathcal{X}_{i+1} = \bl_{D_i}\mathcal{X}_i$ so that $\mathcal{C}_i,\mathcal{C}'_i\subseteq \mathcal{X}_i$. Since all the blowups are centred on the special fibre, the generic fibres, are unchanged, and in particular we have the identification on differentials
	\[\Omega^1_{K(X_i)/K}\cong \Omega^1_{K(X)/K}.\]
	So we can regard $S$ as a set of differentials on the $\mathcal{X}_i$ for any $i$. Since $\mathcal{C}, \mathcal{C}'$ are good for $S$, so are $\mathcal{C}_i$ and $\mathcal{C}'_i$ for any $i$, since the condition depends only on the generic fibre. Hence $\mathcal{C}_n, \mathcal{C}'_n$ are regular models of smooth projective curves, which are $(N-a)$-close, and both good for the set of differentials $S$. Hence for $N$ sufficiently large, the BSD fudge factors computed with respect to $S$ are equal by Theorem \ref{lattice_local_constancy}.
\end{proof}

We make this more specific in the case of hyperelliptic curves:
\begin{cor}\label{fudge_factor_hyperelliptic}
	Let $C/K$ be a hyperelliptic curve of genus $g$ given by the Weierstrass equation
	\[C: y^2 = f(x)\]
	with $f\in R[x]$ separable. There exists an integer $n\ge 1$ such that if $N\ge 1$, and $C'$ is the hyperelliptic curve
	\[C': y^2 = \widetilde{f}(x)\]
	where $\widetilde{f}\in R[x]$ is separable of the same degree as $f$, and $f\equiv \widetilde{f} \mod \pi^N$, then $C$ and $C'$ have the same BSD fudge factors, both computed with respect to the basis of differentials $S = \{x^i\frac{dx}{y}\mid 0\le i \le g-1\}$.
\end{cor}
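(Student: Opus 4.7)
The plan is to recognise this corollary as a direct specialisation of Theorem \ref{fudge_local_constancy}, with the ambient space and the set of differentials $S$ supplied by the explicit hyperelliptic setup of Example \ref{2_hyperell_example}. So the main work is verifying the hypotheses of that theorem, not proving anything substantially new.

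First I would take $\mathcal{X}$ to be the regular $R$-scheme of Example \ref{2_hyperell_example}, obtained by gluing $\mathcal{X}_1 = \spec R[x,y]$ and $\mathcal{X}_2 = \spec R[u,v]$ along $u = 1/x$, $v = y/x^{g+1}$; this is regular since it is covered by affine planes over $R$. The Weierstrass equations for $C$ and $C'$ (together with their transformed versions on $\mathcal{X}_2$) cut out models $\mathcal{C}, \mathcal{C}' \subseteq \mathcal{X}$, and the direct computation at the end of Example \ref{2_hyperell_example} shows that $f - \widetilde{f} \in \pi^N R[x]$ forces $\mathcal{C}$ and $\mathcal{C}'$ to be $N$-close inside $\mathcal{X}$.

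Next I would verify that the set $S = \{\omega_i := x^{i-1}\, dx/y : 1 \le i \le g\}$ is good for both $\mathcal{C}$ and $\mathcal{C}'$ in the sense of Definition \ref{defn_good_for_S}. Each $\omega_i$ is a well-defined element of $\Omega^1_{K(X)/K}$ regular at the generic point of $C$ (and of $C'$), since $y$ is a nonzero rational function on each generic fibre. Restricting $\omega_i$ to $C$ or $C'$ produces $x^{i-1}\, dx/y \in \Omega^1_{K(C)/K}$, which is the standard basis of global holomorphic differentials on a hyperelliptic curve of genus $g$ with a separable Weierstrass equation; this is a classical fact. Hence both $\mathcal{C}$ and $\mathcal{C}'$ are good for $S$.

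With these hypotheses in place I would invoke Theorem \ref{fudge_local_constancy} directly: there exists $n \ge 1$ such that for $N \ge n$, the BSD fudge factors of $C$ and $C'$ computed with respect to the bases $\omega_i|_C$ and $\omega_i|_{C'}$ agree. I do not anticipate a serious obstacle here; the only small point of care is making sure the differentials $\omega_i$ are viewed as elements of $\Omega^1_{K(X)/K}$ for the ambient surface $X$, rather than as rational differentials on a single affine chart, so that the definition of $W_{\mathcal{C}}$ and the restriction map of Definition \ref{defn_good_for_S} make sense simultaneously for both models.
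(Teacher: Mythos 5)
Your proposal is correct and follows exactly the same route as the paper: recognise that the hyperelliptic Weierstrass models inside the glued ambient surface $\mathcal{X}$ of Example \ref{2_hyperell_example} are $N$-close, verify that the standard basis $S = \{x^{i-1}\,dx/y\}$ is good for both $\mathcal{C}$ and $\mathcal{C}'$ in the sense of Definition \ref{defn_good_for_S}, and apply Theorem \ref{fudge_local_constancy}. You merely spell out in a bit more detail the checks that the paper's terse proof leaves implicit.
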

\begin{proof}
	We know that the curves $C, C'$ are $N$-close, and that both are good for $S$. Hence this is a special case of Theorem \ref{fudge_local_constancy}.
\end{proof}
\section{Galois representations}\label{section_galois_reps}
In this section we prove a local constancy result for Galois representations in families of curves.  The precise result is the following:
\begin{thm}\label{galois_rep_local_constancy}
	Let $K$ be a local field of characteristic 0. Let $\mathcal{C},\mathcal{C}'\subseteq \mathcal{X}$ be $N$-close models of smooth projective curves $C, C'$ over $K$, of genus $g\ge 2$. Assume that $\mathcal{X}$ is smooth over $\spec \mathcal{O}_K$. There is an integer $d\ge 1$ such that if $N\ge d$, then the Galois representations
	\[H^1_{\acute{e}t}(C_{\overline{K}}, \mathbb{Q}_l)\qquad\text{and}\qquad H^1_{\acute{e}t}(C'_{\overline{K}}, \mathbb{Q}_l)\]
	are isomorphic (here $l\not=p$ is a prime).
\end{thm}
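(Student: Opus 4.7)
The plan is to reduce the isomorphism of Galois representations to a combinatorial comparison of a common strict normal crossings (SNC) special fibre, via the Rapoport--Zink weight spectral sequence for nearby cycles. Throughout, $p$ denotes the residue characteristic of $K$ (so $l \neq p$).

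First, by Theorem \ref{close_models_regular}, for $N$ sufficiently large the curves $C, C'$ admit regular models $\mathcal{C}_{reg}, \mathcal{C}'_{reg}$ that are formally $M$-close for any preassigned $M$. Since $\mathrm{char}\, K = 0$, embedded resolution of singularities allows us to blow up $\mathcal{C}_{reg}$ in a sequence of regular centres supported on the special fibre (typically singular points of $\mathcal{C}_{reg, s}$, or loci where components fail to cross transversally) until that fibre becomes an SNC divisor. Because these centres depend only on the special fibre, which by formal $1$-closeness coincides with that of $\mathcal{C}'_{reg}$, the same sequence of blowups can be applied to both models. Iterating Theorem \ref{blowup_formal_closeness}, the resulting SNC models $\mathcal{C}_{NC}, \mathcal{C}'_{NC}$ remain formally close, and in particular share the same SNC special fibre $\mathcal{C}_{NC, s} = \mathcal{C}'_{NC, s}$ together with all component multiplicities and intersection data.

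Second, for a proper regular $\mathcal{O}_K$-scheme $\mathcal{Y}$ of relative dimension one whose special fibre is an SNC divisor $\sum_i d_i D_i$, the Rapoport--Zink weight spectral sequence expresses the nearby cycles cohomology $H^1(\mathcal{Y}_{\overline{K}}, \mathbb{Q}_l) \cong H^1(\mathcal{Y}_{\overline{s}}, R\Psi \mathbb{Q}_l)$ as a $G_K$-representation depending only on the stratified $k$-scheme $\mathcal{Y}_s$ together with the multiplicities $d_i$: Frobenius acts naturally on the cohomology of the components and their intersections (which are $G_k$-schemes), while the tame monodromy operator is determined combinatorially by the $d_i$. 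Applied to $\mathcal{C}_{NC}$ and $\mathcal{C}'_{NC}$, the coincidence of their SNC special fibres as stratified $G_k$-schemes with matching multiplicities yields a canonical $G_K$-equivariant isomorphism $H^1_{\acute{e}t}(C_{\overline{K}}, \mathbb{Q}_l) \cong H^1_{\acute{e}t}(C'_{\overline{K}}, \mathbb{Q}_l)$.

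The main technical obstacle is in Step 1: one must verify that embedded resolution to SNC can be carried out uniformly on $\mathcal{C}_{reg}$ and $\mathcal{C}'_{reg}$ using the same centres at each stage, while tracking the precision loss in Theorem \ref{blowup_formal_closeness} (formal $M$-closeness drops by one per blowup, and must remain $\ge 3$ for the theorem to apply). Since each resolution step is dictated by the common special fibre, this is arrangeable by choosing $M$ (hence $N$) large enough in terms of the total length of the resolution procedure. A secondary technicality is that the Rapoport--Zink spectral sequence must be invoked in its Galois-equivariant form -- matching both the inertia/monodromy and Frobenius actions, rather than merely identifying abstract $\mathbb{Q}_l$-vector spaces -- but this is a standard feature of the construction, and the isomorphism of $G_K$-representations then follows by passage to $H^1$.
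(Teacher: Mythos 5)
Your approach diverges substantially from the paper's. The paper invokes the Dokchitser--Dokchitser result \cite[Theorem 2(iii)]{MR4898315} that for genus $g\ge 2$, the $G_K$-representation $H^1_{\acute{e}t}(C_{\overline{K}},\mathbb{Q}_l)$ is determined by the point counts $\#\overline{\mathcal{C}}_L(\mathbb{F}_L)$ of special fibres of regular models over finitely many finite extensions $L/K$. It then applies Theorem~\ref{main_result} over each $\mathcal{O}_L$ (using Proposition~\ref{closeness_properties}(3) to base change $N$-closeness, and the smoothness of $\mathcal{X}/\mathcal{O}_K$ to keep $\mathcal{X}_{\mathcal{O}_L}$ regular) to match the point counts. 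You instead try to work with a single SNC model over $\mathcal{O}_K$ and read off the full $G_K$-representation from its special fibre via the Rapoport--Zink weight spectral sequence.

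There is a genuine gap in your Step 2. The nearby cycles complex $R\Psi\mathbb{Q}_l$, as a $G_K$-equivariant object, is \emph{not} determined by the stratified $k$-scheme $\mathcal{Y}_s$ together with its multiplicities. The special fibre carries only a $G_k$-action; the inertia action on $R\Psi$ is additional structure that genuinely depends on the model. A minimal counterexample already appears in relative dimension zero: the regular $\mathcal{O}_K$-schemes $\spec\mathcal{O}_K[x]/(x^p-\pi)$ and $\spec\mathcal{O}K[x]/(x^p-u\pi)$, for $u$ a unit, have identical special fibres $\spec k[x]/(x^p)$ (same underlying scheme, same multiplicity $p$), yet their generic fibres are different wildly ramified extensions of $K$ and hence have non-isomorphic $G_K$-actions on nearby cycles. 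The same phenomenon occurs for curves whenever $p$ divides some component multiplicity, which is exactly when the model fails to be log smooth and your "tame monodromy operator determined combinatorially by the $d_i$" language breaks down; the wild part of inertia on $H^1$ is then invisible to the combinatorics of the SNC fibre. Symptomatically, your argument never uses the hypothesis $g\ge 2$, which the paper's proof needs essentially (it is required for the Dokchitser--Dokchitser uniqueness theorem), nor does it use the smoothness of $\mathcal{X}$ over $\mathcal{O}_K$ (which the paper needs only to base change to ramified $L$). A correct argument must interrogate the curve over ramified extensions of $K$, where the wild ramification becomes visible in the special fibre; the formal closeness you establish over $\mathcal{O}_K$, even if it glued to a global isomorphism of formal completions (which is itself not automatic from the local automorphisms $\sigma_i$, since they need not agree on overlaps), would not by itself rescue the final reduction to the SNC special fibre.
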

\begin{proof}
	The proof uses a result of Tim and Vladimir Dokchitser \cite[Theorem 2(iii)]{MR4898315}, saying that the $G_K$-representation $H^1_{\acute{e}t}(C_{\overline{K}}, \mathbb{Q}_l)$ is uniquely determined by point counts $\# \overline{\mathcal{C}}_L(\mathbb{F}_L)$, for a finite list of extensions $L/K$. Here $\mathcal{C}_L$ is a regular model of $C_L$ over $L$, with special fibre $\overline{\mathcal{C}}_L$, and $\mathbb{F}_L$ is the residue field of $L$.
	
	Hence it suffices to show that for any fixed finite extension $L/K$, $C_L$ and $C'_L$ have regular models with the same number of points on their special fibre. We have that $C_L$ and $C'_L$ are $N$-close, since $N$-closeness is preserved under base extension by Proposition \ref{closeness_properties}(3).	Note also that since $\mathcal{X}$ is smooth over $\mathcal{O}_K$, $\mathcal{X}_{\mathcal{O}_L}$ is regular. Hence by Theorem \ref{main_result}, for $N$ sufficiently large, we have that $C_L$ and $C'_L$ have regular models with the same special fibre. In particular these regular models have the same number of $\mathbb{F}_L$-points on their special fibre, proving the claim.
\end{proof}
\begin{remark}
	In \cite{MR4898315}, Theorem 2(iii) is phrased in terms of the minimal regular model $\mathcal{C}_L$ of $C$ over $L$. However, the proof goes through even if we replace $\mathcal{C}_L$ by an arbitrary regular model. Indeed, the proof relies on the isomorphism
	\[H^1_{\acute{e}t}(C_{\overline{L}}, \mathbb{Q}_l)^{I_L}\cong H^1_{\acute{e}t}((\overline{\mathcal{C}}_L)_{\overline{\mathbb{F}}_L}, \overline{\mathbb{F}}_L),\]
	and the Grothendieck-Lefschetz trace formula to express the Frobenius traces on étale cohomology as point counts. Both of these hold in the case of $\mathcal{C}_L$ a proper regular model.
\end{remark}
As an easy corollary, we also obtain the following:
\begin{cor}\label{galois_rep_invariants_local_constancy}
	Let $K$ be a local field of characteristic 0. Let $\mathcal{C},\mathcal{C}'\subseteq \mathcal{X}$ be $N$-close models of smooth projective curves $C, C'$ over $K$, of genus $g\ge 2$. For $N$ sufficiently large, the curves $C$ and $C'$ have the same local Euler factor, local root number and conductor exponent.
\end{cor}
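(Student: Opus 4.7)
The plan is to reduce this corollary directly to Theorem \ref{galois_rep_local_constancy}. That theorem gives us, for $N$ sufficiently large, an isomorphism of $G_K$-representations
\[V := H^1_{\acute{e}t}(C_{\overline{K}}, \mathbb{Q}_l) \cong H^1_{\acute{e}t}(C'_{\overline{K}}, \mathbb{Q}_l) =: V'.\]
The task then is simply to observe that each of the three invariants in the statement (local Euler factor, local root number, and conductor exponent) is a function only of the isomorphism class of the associated Galois representation.

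First I would recall the definitions. The local Euler factor at the place of $K$ is defined as $L(C, T) = \det(1 - \mathrm{Frob}\cdot T \mid V^{I_K})^{-1}$, where $I_K\subseteq G_K$ is the inertia subgroup and $\mathrm{Frob}$ is a geometric Frobenius acting on the inertia invariants; this is manifestly a function of $V$ as a $G_K$-representation. The conductor exponent is $f(V) = \dim V/V^{I_K} + \mathrm{Sw}(V)$ where $\mathrm{Sw}$ is the Swan conductor, and again this depends only on $V$ as a representation of $G_K$. The local root number $w(V)$ is the sign of the local epsilon factor $\varepsilon(V, \psi, dx)$ (for a fixed additive character $\psi$ and Haar measure $dx$ on $K$); by the construction of epsilon factors (Deligne, Langlands), this too is an isomorphism invariant of the $G_K$-representation.

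Given these standard facts, the argument is immediate: pick $N$ large enough to invoke Theorem \ref{galois_rep_local_constancy}, obtaining $V \cong V'$ as $G_K$-representations. Applying each of the three functors above yields equality of Euler factors, conductor exponents and root numbers of $C$ and $C'$. There is no real obstacle; the only care needed is to note that the conventional definitions of these invariants for a curve agree with their values on $H^1_{\acute{e}t}$, which is the standard convention.
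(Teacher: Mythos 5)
Your proof is correct and takes exactly the same route as the paper: reduce to Theorem \ref{galois_rep_local_constancy} and observe that the Euler factor, conductor exponent, and root number are all isomorphism invariants of the $G_K$-representation $H^1_{\acute{e}t}(C_{\overline{K}}, \mathbb{Q}_l)$. The paper's proof is a one-liner stating just this; you have merely unpacked the standard definitions.
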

\begin{proof}
	All the listed invariants are functions of the Galois representation, hence the claim follows by Theorem \ref{galois_rep_local_constancy}.
\end{proof}
\appendix
\section{Appendix: Hensel's lemma}
In one of our proofs, we shall need the following version of Hensel's lemma, for $\pi$-adically complete non-local rings.
\begin{thm}[Hensel's lemma]\label{Hensel}
	Let $A$ be a ring, complete with respect to an ideal $I$. Suppose that $f_1, \ldots, f_n\in A[x_1, \ldots, x_n]$ are $n$ polynomials in $n$ variables, and suppose we have $a_1, \ldots, a_n\in A$ such that 
	\[f_i(a_1, \ldots, a_n)\in I^d \text{ for each }i\]
	for some integer $d\ge 1$, and that the $n\times n$ matrix
	\[J = \left(\frac{\partial f_i}{\partial x_j}(a_1, \ldots, a_n)\right)_{i,j = 1}^n\]
	is invertible over $A$. Then there exist $b_1, \ldots, b_n\in A$ such that $b_i\equiv a_i \mod I^d$ and 
	\[f_i(b_1, \ldots, b_n) = 0\text{ for each }i.\]
\end{thm}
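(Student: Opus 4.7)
The plan is to construct the root $\mathbf{b}$ by a multivariate Newton iteration starting from the approximate solution $\mathbf{a}^{(0)} = (a_1, \ldots, a_n)$. Writing $F$ for the column vector $(f_1, \ldots, f_n)$ and $J(\mathbf{v})$ for the Jacobian matrix evaluated at a tuple $\mathbf{v} \in A^n$, I would define
\[\mathbf{a}^{(k+1)} = \mathbf{a}^{(k)} - J(\mathbf{a}^{(k)})^{-1} F(\mathbf{a}^{(k)}),\]
and aim to show that this sequence is Cauchy in the $I$-adic topology, with limit $\mathbf{b}$ satisfying $F(\mathbf{b}) = 0$ and $\mathbf{b} \equiv \mathbf{a}^{(0)} \pmod{I^d}$.

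To make sense of the iteration, the first step is to check that each Jacobian $J(\mathbf{a}^{(k)})$ is invertible over $A$. A straightforward induction shows $\mathbf{a}^{(k)} \equiv \mathbf{a}^{(0)} \pmod{I^d}$, so $\det J(\mathbf{a}^{(k)}) - \det J(\mathbf{a}^{(0)}) \in I$. Invertibility is then obtained from the observation that $I$ lies in the Jacobson radical of $A$: for any $x \in I$ the geometric series $\sum_{m \ge 0} (-x)^m$ converges by $I$-adic completeness and is an inverse to $1+x$. Combined with $\det J(\mathbf{a}^{(0)}) \in A^\times$, this exhibits $\det J(\mathbf{a}^{(k)})$ as a unit.

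The quantitative heart of the argument is the polynomial Taylor identity
\[F(\mathbf{v} + \mathbf{h}) = F(\mathbf{v}) + J(\mathbf{v}) \mathbf{h} + Q(\mathbf{v}, \mathbf{h}),\]
where every entry of $Q(\mathbf{v}, \mathbf{h})$ lies in the ideal $(h_1, \ldots, h_n)^2$ of $A[h_1, \ldots, h_n]$. Applied with $\mathbf{h} = -J(\mathbf{a}^{(k)})^{-1} F(\mathbf{a}^{(k)})$, the linear terms cancel and one obtains $F(\mathbf{a}^{(k+1)}) = Q(\mathbf{a}^{(k)}, \mathbf{h})$. Feeding in $F(\mathbf{a}^{(0)}) \in I^d$, an induction then yields $F(\mathbf{a}^{(k)}) \in I^{d \cdot 2^k}$ and correspondingly $\mathbf{a}^{(k+1)} - \mathbf{a}^{(k)} \in I^{d \cdot 2^k}$. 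Cauchyness in the $I$-adic topology follows, completeness of $A$ supplies a limit $\mathbf{b}$, and continuity of polynomial evaluation forces $F(\mathbf{b}) = 0$, while $\mathbf{b} \equiv \mathbf{a}^{(0)} \pmod{I^d}$ is immediate from the bounds on the increments.

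I expect the main subtlety to be not the Newton step itself, which is quite standard, but rather the handling of the fact that $A$ is not assumed local: the usual Hensel argument over a complete local ring inverts the Jacobian via residue-field reasoning, whereas here invertibility must be promoted to the whole ring $A$. The key observation that $I$-adic completeness automatically places $I$ in the Jacobson radical is exactly what is needed both to invert the matrices $J(\mathbf{a}^{(k)})$ uniformly and to guarantee that the iterates remain in $A$.
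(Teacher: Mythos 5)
Your proof is correct, but it takes a genuinely different route from the paper's. The paper first performs a linear change of variables $X \mapsto J^{-1}(X+\mathbf a)$ to reduce to the normalized situation where the initial Jacobian is the identity matrix and the approximate root is the origin, and then runs the \emph{chord method}: the iteration $\mathbf a_{m+1} = \mathbf a_m - F(\mathbf a_m)$, which keeps the linearization fixed (equal to the identity) at every step. This yields a linear rate of $I$-adic improvement, $\mathbf a_{m+1} - \mathbf a_m \in I^{d+m}$, and entirely sidesteps the question of inverting Jacobians at the subsequent iterates, since only the initial one is ever used. You instead run genuine Newton iteration, re-evaluating and inverting $J(\mathbf a^{(k)})$ at each step, which buys quadratic convergence $F(\mathbf a^{(k)}) \in I^{d\cdot 2^k}$, but requires the additional observation that $I$-adic completeness forces $I$ into the Jacobson radical, so that any matrix congruent to $J(\mathbf a^{(0)})$ modulo $I$ remains invertible. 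That observation is correct and is exactly the right tool for the non-local setting; your invertibility and increment estimates need to be carried along in a joint induction (to define $\mathbf a^{(k+1)}$ one must already know $J(\mathbf a^{(k)})$ is invertible, which requires $\mathbf a^{(k)} \equiv \mathbf a^{(0)} \pmod I$), but this interleaving is standard and works out. In short: the paper trades the quadratic convergence rate for never having to reason about invertibility away from the base point, whereas your argument is the more classical Newton scheme at the price of the Jacobson-radical lemma.
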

\begin{proof}
	Define the function $F$ on $A^n$ by
	\[F\left(\left[\begin{array}{c}
		t_1\\
		\vdots\\
		t_n
	\end{array}\right]\right) = \left[\begin{array}{c}
		f_1(t_1, \ldots, t_n)\\
		\vdots\\
		f_n(t_1, \ldots, t_n)
	\end{array}\right].\]
	We change variables as follows: let $g_1, \ldots, g_n\in A[x_1, \ldots, x_n]$ be polynomials such that for $X = (x_1, \ldots, x_n)$ and $\mathbf{a} = (a_1, \ldots, a_n)$ we have
	\[F(J^{-1}(X + \mathbf{a})) = \left[\begin{array}{c}
		g_1(X)\\
		\vdots\\
		g_n(X)
	\end{array}\right] = G(X).\]
	Then by assumption $g_i(0, \ldots, 0) \in I^d$ for all $i$, and the matrix $(\frac{\partial g_i}{\partial x_j}(0, \ldots, 0))$ is the $n\times n$ identity matrix. If we can find $\mathbf{b}' = (b_1', \ldots, b_n')$ such that $g_i(\mathbf{b}') = 0$ for all $i$, and $\mathbf{b}'\in I^dA^n$, then $\mathbf{b} = J\mathbf{b}' + \mathbf{a}$ satisfies the conditions of the theorem. 
	
	Hence we may WLOG assume that $\mathbf{a} = (0, \ldots, 0)$ and that $J$ is the identity matrix.
	
	We define the sequence $\mathbf{a}_m$ of elements of $A^n$ by the recurrence
	\[\mathbf{a}_0 = \mathbf{a}\qquad\text{and}\qquad \mathbf{a}_{m+1} = \mathbf{a}_m - F(\mathbf{a}_m).\]
	We claim that $\mathbf{a}_i$ converges to $\mathbf{b} = (b_1, \ldots, b_n)$ which satisfies the requirements of the theorem.
	
	It is clear that for all $m$, $\mathbf{a}_m \in I^dA^n$. We can show by induction that $\mathbf{a}_m \equiv \mathbf{a}_{m+1}\mod I^{d+m}$ for all $m\ge 0$. For $m=0$, this is true by assumption (as $F(\mathbf{a})\in I^dA^n$). Now suppose $m\ge 1$ and the claim holds for all integers up to $m$. We can put
	\begin{align*}
		\mathbf{a}_{m+1}-\mathbf{a}_m &= (\mathbf{a}_m - F(\mathbf{a}_m)) - (\mathbf{a}_{m-1} - F(\mathbf{a}_{m-1})) =\\
		&= (\mathbf{a}_m - \mathbf{a}_{m-1}) - (F(\mathbf{a}_m) - F(\mathbf{a}_{m-1}))
	\end{align*}	
	We will use the Taylor expansion
	\[F(X) = F(0) + X + \sum_{i,j=1}^n x_ix_jR_{ij},\]
	which holds for some $R_{ij}\in A^n$ (remember $J$ is the identity). Plugging in $\mathbf{a}_m$ and $\mathbf{a}_{m-1}$, and using $\mathbf{a}_{m}\equiv \mathbf{a}_{m-1} \mod I^{d+m-1}$ we get that
	\[F(\mathbf{a}_m)-F(\mathbf{a}_{m-1}) \equiv \mathbf{a}_m - \mathbf{a}_{m-1} \mod I^{2(d+m-1)}\]
	and since $2(d+m-1)\ge d + m$, we get that $\mathbf{a}_{m+1}-\mathbf{a}_m \in I^{d+m}$.
	
	Hence since $A$ is $I$-adically complete, we get a limit $\mathbf{b} = \lim_{m\to \infty} \mathbf{a}_m$. Since $\mathbf{a}_m \in I^dA^n$ for all $m$, we have $\mathbf{b}\in I^dA^n$, and taking the limit of $F(\mathbf{a}_m) = \mathbf{a}_{m}-\mathbf{a}_{m+1}$ as $m\to \infty$ we get that $F(\mathbf{b}) = 0$, as desired.
\end{proof}
\printbibliography
\end{document}